\newfont{\script}{eusm10 scaled\magstep1}
\newcommand{\R}{\mathbb{R}}
\numberwithin{equation}{section}
\providecommand{\Real}{\mathop{\rm Re}\nolimits}%
\definecolor{jddcol}{rgb}{0,0,0.8}
\definecolor{afcol}{rgb}{1,0,0}
\newtheorem{theorem}{Theorem}[section]
\newtheorem{proposition}[theorem]{Proposition}
\newtheorem{lemma}[theorem]{Lemma}
\theoremstyle{definition}
\newtheorem{definition}[theorem]{Definition}
\newtheorem{example}[theorem]{Example}
\theoremstyle{remark}
\newtheorem{remark}{Remark}
\title[General class of fractional-calculus operators]{On fractional calculus with analytic kernels with respect to functions}
\author[Oumarou]{Christian Maxime Steve Oumarou}
\author[Fahad]{Hafiz Muhammad Fahad}
\author[Djida]{Jean-Daniel Djida}
\author[Fernandez]{Arran Fernandez}
\address[Oumarou,Djida]{\small African Institute for Mathematical Sciences (AIMS), P.O. Box 608, Limbe Crystal Gardens,South West Region, Cameroon.}
\email[Oumarou]{christian.oumarou@aims-cameroon.org}
\email[Djida]{jeandaniel.djida@aims-cameroon.org}
\address[Fahad,Fernandez]{\small Department of Mathematics, Eastern Mediterranean University, Famagusta, Northern Cyprus, via Mersin-10, Turkey.}
\email[Fahad]{hafizmuhammadfahad13@gmail.com}
\email[Fernandez]{arran.fernandez@emu.edu.tr}
\thanks{The third author is supported by the Deutscher Akademischer Austausch Dienst/German Academic Exchange Service (DAAD)}
\keywords{fractional integral; fractional derivative; generalised fractional calculus; operational calculus; Laplace transforms; function spaces}
\subjclass[2010]{26A33,44A45}
\begin{document}

\begin{abstract}
Many different types of fractional calculus have been proposed, which can be organised into some general classes of operators. For a unified mathematical theory, results should be proved in the most general possible setting. Two important classes of fractional-calculus operators are the fractional integrals and derivatives with respect to functions (dating back to the 1970s) and those with general analytic kernels (introduced in 2019). To cover both of these settings in a single study, we can consider fractional integrals and derivatives with analytic kernels with respect to functions, which have never been studied in detail before. Here we establish the basic properties of these general operators, including series formulae, composition relations, function spaces, and Laplace transforms. The tools of convergent series, from fractional calculus with analytic kernels, and of operational calculus, from fractional calculus with respect to functions, are essential ingredients in the analysis of the general class that covers both.
\end{abstract}

\maketitle

\section{Introduction}
Fractional calculus was introduced around the end of the seventeenth century, as a branch of mathematical analysis that deals with the studies of various possibilities of defining real number powers or complex number powers of the differentiation operator and the integration operator. The expression ``power'' refers to iterative application of a linear operator to a function: repeatedly composing the operator $n$ times gives the $n$th power of the operator, when $n$ is an integer. Fractional calculus is the generalisation of classical calculus concerned with operations of integration and differentiation of non-integer order.

Gottfried Wilhelm Leibniz, one of the inventors of calculus, established the notation $\frac{\mathrm{d}^n}{\mathrm{d}x^n}f(x)$ for the $n$th derivative of a function $f$, for natural numbers $n$. In a letter of 1695, Fran\c cois Antoine de L'H\^{o}pital wrote to Leibniz asking what would happen if $n=\frac{1}{2}$, and the response of Leibniz to L'H\^{o}pital was: \emph{``A paradox, from which it appears that one day useful consequences will be drawn''}. That was the birth of what we call fractional calculus, which has attracted great mathematicians such as Euler, Liouville, Laplace, Riemann, and many others \cite{MillerRoss,Samko}. Nowadays many scientists are interested in the field due to its applications in engineering, physics, chemistry, biology, economics, and so on. These applications arise because fractional calculus is very useful for modelling different types of physical systems: often due to its nonlocality properties, as opposed to classical derivatives which are local, and also the ability of fractional derivatives to capture intermediate behaviours, such as viscoelastic substances which are intermediate between solid and liquid \cite{BDST,Hilfer,KST}.

Aside from the applications of fractional calculus, many advances have also been made in this field from the viewpoint of pure mathematics. Since the study of differential equations is a major field of mathematics, many methods used for solving them have been investigated and extended to a fractional context. Nowadays we have analytical solution methods \cite{kexue-jigen,luchko,restrepo-ruzhansky-suragan}, numerical solution methods \cite{brzezinski,li-chen,ozdemir-yavuz}, qualitative properties of solutions \cite{djida-nieto-area,fernandez,restrepo-suragan}, as well as several extensions of the classical notion of fractional differential equations, such as stochastic \cite{ahmadova-mahmudov}, delay \cite{huseynov-mahmudov}, and variable-order \cite{sun-etal} fractional differential equations.

There are many different ways of defining fractional operators, unlike in classical calculus where there is only one way to define the derivative operation. The Riemann--Liouville fractional calculus is the most commonly used definition, but it is limited by the fact that when used for modelling physical problems, the initial value conditions are fractional, which seems not to be appropriate for physical conditions. In this sense, the Caputo fractional derivative is more preferable since then the initial condition required is in the classical form~\cite{Diethelm}. Among other definitions of fractional calculus, we can mention Hilfer, Riesz, Hadamard, Atangana--Baleanu, Prabhakar, tempered, and many others~\cite{Samko,BF2019}. These definitions are not equivalent to each other, but each one has its own properties and its own applications, and all of them may be useful in different settings: for example, for modelling systems with power law behaviour, logarithmic behaviour, exponential behaviour, and other more complex behaviours, or for different types of initial conditions.

For applications, then, any one individual type of fractional calculus may be useful. But from the mathematical point of view, it is not efficient to prove the same properties and theorems over and over again for every fractional operator. Mathematicians seek to generalise, to prove results just once in a general setting and then apply them for each operator as special cases. Therefore, it is proposed to study broad general classes of operators instead of analysing specific operators individually.

The class of fractional integrals and derivatives with respect to functions is an important subtopic of fractional calculus, dating back to Erdelyi and Osler in the 1960s and 1970s~\cite{Almeida2017,Erdelyi,Osler}. The class of fractional integrals and derivatives with analytic kernels is a newer innovation, proposed in 2019~\cite{BF2019,FOB}. Each of these two classes is broad enough to cover various specific types of fractional calculus and capture diverse behaviours in fractional systems. Combining both ideas yields a third, even more general, class: that of fractional integrals and derivatives with analytic kernels with respect to functions. This covers both of the two above-mentioned classes as subclasses; it was briefly defined in~\cite{FOB}, but its properties have never been studied in the published literature so far. In this work, we undertake the first formal study of fractional calculus with analytic kernels with respect to functions.

The organisation of this paper is as follows. In Section~\ref{chap:2}, we review fractional calculus: firstly in the classical Riemann--Liouville and Caputo definitions, secondly the class of fractional calculus with respect to functions, thirdly the class of fractional calculus with analytic kernels. In section~\ref{chap:5}, we consider fractional calculus with analytic kernels with respect to functions, and prove various new results concerning these generalised operators, such as establishing series formulae and composition properties. In section~\ref{chap:function}, we consider functional analysis of these new operators, establishing appropriate function spaces in which they can be applied. In section~\ref{chap:Lap}, we show how some fractional integro-differential equations using the new operators may be solved using a type of generalised Laplace transform. Finally, in section~\ref{chap:concl} we conclude the paper and look towards plans for future research.

\section{Preliminaries}\label{chap:2}

In this section, we provide the definitions and fundamental properties of the fractional integral and derivative operators which will be used and referred to throughout the work. We shall start by introducing the well-known operators of Riemann--Liouville and Caputo, on which much of fractional calculus is based, and then continue with the two firmly established general classes mentioned above: fractional calculus with respect to functions, and fractional calculus with general analytic kernels.

\subsection{Riemann--Liouville and Caputo fractional calculus}

\begin{definition}[Fractional integral] \label{Def:RLi}
	Let $a\in\mathbb{R}$ be a constant, $u\in L^1[a,b]$ a function, and $\alpha\in \mathbb{C}$ such that $\Real(\alpha)>0$. Then, the Riemann--Liouville fractional integral of the function $u$ to order $\alpha$ with respect to $x$ and with constant of integration $a$ is defined by:
	\begin{align*}
	\prescript{RL}{a}I^{\alpha}_{x} u(x)\coloneqq \frac{1}{\Gamma(\alpha)} \int_{a}^{x} (x-t)^{\alpha-1} u(t) \,\mathrm{d}t. 
	\end{align*}
\end{definition}

\begin{definition}[Riemann--Liouville fractional derivative] \label{Def:RLd}
	Let $a\in\mathbb{R}$ be a constant, $u\in C^n[a,b]$ a function, and $\alpha\in \mathbb{C}$ such that $\Real(\alpha)\geq0$. Then, the Riemann--Liouville fractional derivative with order $\alpha$ of the function $u$ with respect to $x$ and with constant of integration $a$ is given by: 
	\begin{align*}
	\prescript{RL}{a}D^{\alpha}_{x} u(x)\coloneqq \frac{\mathrm{d}^n}{\mathrm{d}x^n}\left(\prescript{RL}{a}I^{n-\alpha}_{x}u(x)\right),\;\; n=\lfloor\Real(\alpha)\rfloor +1, 
	\end{align*}
	where we note that $n$ is chosen so that $n\in \mathbb{N}$ and $\Real(n-\alpha)>0$, to give a classical derivative and a fractional integral.
\end{definition}

\begin{remark} By convention, we have 
\[
\prescript{RL}{a}D^{-\alpha}_{x} u(x)=\prescript{RL}{a}I^{\alpha}_{x} u(x),
\]
so that the fractional operators $\prescript{RL}{a}D^{\alpha}_{x}$ and $\prescript{RL}{a}I^{\alpha}_{x}$ are defined for all $\alpha\in\mathbb{C}$. We use the word ``differintegral'' for an operator which may be either a fractional derivative or a fractional integral.
\end{remark}

\begin{remark} When the order of differintegration is a complex number, the difference between fractional derivative and fractional integral is at the level of the sign of the real part. If $\Real(\alpha)=0$, pure imaginary order, then we have to use Definition \ref{Def:RLd} (fractional derivative) only because Definition \ref{Def:RLi} (fractional integral) requires $\Real(\alpha)>0$. This implies
\begin{align*}
\prescript{RL}{a}D^{i\theta}_{x} u(x)\coloneqq \frac{\mathrm{d}}{\mathrm{d}x} \left(\prescript{RL}{a}I^{1-i\theta}_{x} u(x)\right).
\end{align*}
\end{remark}

\begin{definition}[Caputo fractional derivative]
	Let $a\in\mathbb{R}$ be a constant, $u\in C^n[a,b]$ a function, and $\alpha\in \mathbb{C}$ such that $\Real(\alpha)\geq0$. Then, the Caputo fractional derivative of the function $u$ with order $\alpha$ with respect to $x$ with constant of integration $a$ is defined by:
	\begin{align*}
	\prescript{C}{a}D^{\alpha}_{x} u(x)&\coloneqq \prescript{RL}{a}I^{n-\alpha}_{x}\left(\frac{\mathrm{d}^n}{\mathrm{d}x^n} u(x)\right),\;\; n\coloneqq \lfloor\Real(\alpha)\rfloor+1,\;\; \Real(\alpha)>0.	
	\end{align*}
In Caputo fractional calculus, fractional integrals are defined by the formula which gives the Riemann--Liouville integral.
\end{definition}

Following the above definitions, we state some results on the compositions of fractional operators.  

\begin{lemma}\label{lem:sg} 
Riemann--Liouville fractional integrals are commutative and have a semigroup property. That is,
	\begin{align*}
	\prescript{RL}{a}I^{\alpha}_{x}\left(\prescript{RL}{a}I^{\beta}_{x} u(x)\right)=\prescript{RL}{a}I^{\beta}_{x}\left(\prescript{RL}{a}I^{\alpha}_{x} u(x)\right)=\prescript{RL}{a}I^{\alpha+\beta}_{x} u(x),
	\end{align*}  
for any $\alpha,\beta\in\mathbb{C}$ such that $\Real(\alpha)>0$, $\Real(\beta)>0$ and any $a\in\mathbb{R}$, $u\in L^1[a,b]$.
\end{lemma}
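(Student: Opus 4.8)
The plan is to prove the semigroup and commutativity properties directly from the integral definition in Definition~\ref{Def:RLi}, reducing everything to a double-integral computation that is resolved by Fubini's theorem and the Beta function. I would begin by writing out the left-hand composition explicitly:
\begin{align*}
\prescript{RL}{a}I^{\alpha}_{x}\left(\prescript{RL}{a}I^{\beta}_{x} u(x)\right)
=\frac{1}{\Gamma(\alpha)\Gamma(\beta)}\int_{a}^{x}(x-t)^{\alpha-1}\int_{a}^{t}(t-s)^{\beta-1}u(s)\,\mathrm{d}s\,\mathrm{d}t.
\end{align*}
The first key step is to justify interchanging the order of integration. Since $u\in L^1[a,b]$ and $\Real(\alpha),\Real(\beta)>0$, the kernels $(x-t)^{\alpha-1}$ and $(t-s)^{\beta-1}$ are locally integrable, so the iterated integral is absolutely convergent and Fubini's theorem applies on the triangular region $\{(s,t): a\le s\le t\le x\}$. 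After swapping, the integral over $s$ becomes the outer integral and the integral over $t$ runs from $s$ to $x$.

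The second step is to evaluate the inner integral $\int_{s}^{x}(x-t)^{\alpha-1}(t-s)^{\beta-1}\,\mathrm{d}t$. The substitution $t=s+(x-s)\tau$ converts this into $(x-s)^{\alpha+\beta-1}\int_{0}^{1}(1-\tau)^{\alpha-1}\tau^{\beta-1}\,\mathrm{d}\tau=(x-s)^{\alpha+\beta-1}B(\alpha,\beta)$, where the Beta integral converges precisely because $\Real(\alpha)>0$ and $\Real(\beta)>0$. Using the identity $B(\alpha,\beta)=\frac{\Gamma(\alpha)\Gamma(\beta)}{\Gamma(\alpha+\beta)}$, the Gamma factors cancel against the prefactor $\frac{1}{\Gamma(\alpha)\Gamma(\beta)}$, leaving exactly
\begin{align*}
\frac{1}{\Gamma(\alpha+\beta)}\int_{a}^{x}(x-s)^{\alpha+\beta-1}u(s)\,\mathrm{d}s=\prescript{RL}{a}I^{\alpha+\beta}_{x}u(x),
\end{align*}
which is the desired semigroup formula. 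Commutativity then follows immediately, since $\alpha+\beta=\beta+\alpha$ shows that composing in the opposite order yields the same operator $\prescript{RL}{a}I^{\alpha+\beta}_{x}=\prescript{RL}{a}I^{\beta+\alpha}_{x}$.

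I expect the main obstacle to be the rigorous justification of the Fubini interchange rather than the Beta-function algebra. Because $u$ is only assumed to be $L^1$ and the kernels have integrable singularities when $\Real(\alpha)<1$ or $\Real(\beta)<1$, one should verify absolute integrability of $|u(s)|\,|x-t|^{\Real(\alpha)-1}|t-s|^{\Real(\beta)-1}$ over the triangle before swapping; this is where the hypotheses $\Real(\alpha),\Real(\beta)>0$ are genuinely needed, guaranteeing both that the Beta integral converges and that Tonelli's theorem certifies finiteness of the double integral of the absolute value. Once integrability is established the remaining manipulations are routine, so I would devote most of the care to that measure-theoretic step.
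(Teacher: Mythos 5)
Your proof is correct. Note that the paper itself offers no proof of this lemma: it is stated as a classical preliminary fact (going back to the standard references on Riemann--Liouville calculus), so there is no ``paper proof'' to compare against. Your argument --- writing the composition as a double integral over the triangle $\{a\le s\le t\le x\}$, invoking Tonelli/Fubini, and collapsing the inner integral to a Beta function via $t=s+(x-s)\tau$ so that $B(\alpha,\beta)=\Gamma(\alpha)\Gamma(\beta)/\Gamma(\alpha+\beta)$ cancels the prefactor --- is exactly the canonical proof of this result, and deducing commutativity from $\alpha+\beta=\beta+\alpha$ is the right way to finish. One small refinement worth making explicit: for $u\in L^1[a,b]$ the quantity $\int_a^x|u(s)|(x-s)^{\Real(\alpha+\beta)-1}\,\mathrm{d}s$ need not be finite for \emph{every} $x$, only for almost every $x$ (since $\prescript{RL}{a}I^{\Real(\alpha+\beta)}_x|u|$ is itself an $L^1$ function, hence finite a.e.); consequently the semigroup identity holds almost everywhere on $[a,b]$, which is the correct sense in which operators on $L^1$ are equal. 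With that reading, your Tonelli step goes through and the proof is complete.
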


\begin{lemma}\label{lem:st} 
Standard ($\mathbb{N}$-order) derivatives of Riemann--Liouville differintegrals have a semigroup property. That is, 
	\begin{align*}
	\prescript{RL}{a}D^{n}_{x}\left(\prescript{RL}{a}D^{\alpha}_{x} u(x)\right)&=\prescript{RL}{a}D^{n+\alpha}_{x} u(x),
	\end{align*}
for any $\alpha \in \mathbb{C}$, $n\in \mathbb{N}$, $a\in \mathbb{R}$, and sufficiently differentiable function $u$ on $[a,b]$.
\end{lemma}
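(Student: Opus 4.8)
The plan is to reduce the statement to the single-derivative case $n=1$ and then lift it by induction, recalling that the outer operator $\prescript{RL}{a}D^{n}_{x}$ is the ordinary $n$-th derivative $\frac{\dd^{n}}{\dx^{n}}$. The inductive step is routine: assuming $\frac{\dd^{n-1}}{\dx^{n-1}}\prescript{RL}{a}D^{\beta}_{x}u=\prescript{RL}{a}D^{(n-1)+\beta}_{x}u$ for every complex order $\beta$, I would write $\prescript{RL}{a}D^{n}_{x}\left(\prescript{RL}{a}D^{\alpha}_{x}u\right)=\frac{\dd^{n-1}}{\dx^{n-1}}\left(\frac{\dd}{\dx}\prescript{RL}{a}D^{\alpha}_{x}u\right)$, use the single-derivative identity $\frac{\dd}{\dx}\prescript{RL}{a}D^{\alpha}_{x}u=\prescript{RL}{a}D^{1+\alpha}_{x}u$ to rewrite the inner expression, and then apply the inductive hypothesis with $\beta=1+\alpha$ to obtain $\prescript{RL}{a}D^{(n-1)+(1+\alpha)}_{x}u=\prescript{RL}{a}D^{n+\alpha}_{x}u$. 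Thus the whole result rests on the single-derivative identity, which I would prove by cases on the sign of $\Real(\alpha)$.

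When $\Real(\alpha)\ge0$, Definition~\ref{Def:RLd} applies to the inner operator: with $m=\lfloor\Real(\alpha)\rfloor+1$ one has $\prescript{RL}{a}D^{\alpha}_{x}u=\frac{\dd^{m}}{\dx^{m}}\prescript{RL}{a}I^{m-\alpha}_{x}u$, so differentiating once more yields $\frac{\dd^{m+1}}{\dx^{m+1}}\prescript{RL}{a}I^{m-\alpha}_{x}u$. On the other side, the arithmetic fact $\lfloor\Real(\alpha)+1\rfloor=\lfloor\Real(\alpha)\rfloor+1$ gives $\lfloor\Real(1+\alpha)\rfloor+1=m+1$, so Definition~\ref{Def:RLd} produces $\prescript{RL}{a}D^{1+\alpha}_{x}u=\frac{\dd^{m+1}}{\dx^{m+1}}\prescript{RL}{a}I^{(m+1)-(1+\alpha)}_{x}u$; since $(m+1)-(1+\alpha)=m-\alpha$, the two sides agree.

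When $\Real(\alpha)<0$, the convention $\prescript{RL}{a}D^{\alpha}_{x}u=\prescript{RL}{a}I^{-\alpha}_{x}u$ makes the inner operator a fractional integral, and I would split further. If $-1\le\Real(\alpha)<0$, then $0\le\Real(1+\alpha)<1$, so Definition~\ref{Def:RLd} gives $\prescript{RL}{a}D^{1+\alpha}_{x}u=\frac{\dd}{\dx}\prescript{RL}{a}I^{-\alpha}_{x}u$, which is literally $\frac{\dd}{\dx}\prescript{RL}{a}D^{\alpha}_{x}u$, so both sides coincide. If $\Real(\alpha)<-1$, both sides remain fractional integrals; here I would invoke the semigroup property of Lemma~\ref{lem:sg} to factor $\prescript{RL}{a}I^{-\alpha}_{x}u=\prescript{RL}{a}I^{1}_{x}\left(\prescript{RL}{a}I^{-\alpha-1}_{x}u\right)$, which is legitimate because $\Real(-\alpha-1)>0$, and then cancel the single integer integration against the derivative by the fundamental theorem of calculus, $\frac{\dd}{\dx}\prescript{RL}{a}I^{1}_{x}=\mathrm{id}$, leaving $\prescript{RL}{a}I^{-\alpha-1}_{x}u=\prescript{RL}{a}D^{1+\alpha}_{x}u$.

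I expect the main obstacle to be exactly this $\Real(\alpha)<0$ regime: as integer derivatives accumulate, the order $n+\alpha$ eventually crosses from negative to nonnegative real part, turning a fractional integral into a fractional derivative, and a direct treatment for general $n$ would have to track where that transition occurs. Reducing to $n=1$ is what sidesteps this, since then only a single unit of order is ever added at a time. The remaining care concerns the phrase ``sufficiently differentiable'': the factorisation-and-cancellation step needs $\prescript{RL}{a}I^{-\alpha-1}_{x}u$ to be differentiable (continuity suffices, and the identity $\frac{\dd}{\dx}\prescript{RL}{a}I^{1}_{x}u=u$ already holds almost everywhere for $u\in L^{1}$), and the repeated differentiations in the inductive step must all be valid — which is precisely what the smoothness hypothesis on $u$ guarantees.
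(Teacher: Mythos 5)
Your proof is correct, but note that the paper contains no proof of Lemma~\ref{lem:st} to compare it against: the lemma is recalled in the preliminaries as a classical fact about Riemann--Liouville operators (alongside Lemmas~\ref{lem:sg} and~\ref{lem:RLS}), stated without argument. Judged on its own, your reduction to the case $n=1$ plus induction is sound. Writing $D^{\alpha}$ and $I^{\alpha}$ for $\prescript{RL}{a}D^{\alpha}_{x}$ and $\prescript{RL}{a}I^{\alpha}_{x}$: your case $\Real(\alpha)\ge 0$ is purely definitional (using $\lfloor \Real(\alpha)+1\rfloor=\lfloor\Real(\alpha)\rfloor+1$), your case $-1\le\Real(\alpha)<0$ correctly observes that both sides are literally the same expression under Definition~\ref{Def:RLd} and the convention $D^{\alpha}=I^{-\alpha}$, and your case $\Real(\alpha)<-1$ correctly combines Lemma~\ref{lem:sg} with the fundamental theorem of calculus; the inductive step, carried out with the hypothesis quantified over all complex orders $\beta$, then closes the argument. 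For comparison, the usual textbook route (Miller--Ross, Samko et al.) is direct rather than inductive: for $\Real(\alpha)\ge0$ one notes at once that $D^{n}D^{\alpha}u=\frac{\mathrm{d}^{n+m}}{\mathrm{d}x^{n+m}}I^{m-\alpha}u$ with $n+m=\lfloor\Real(n+\alpha)\rfloor+1$, which is $D^{n+\alpha}u$ by definition; for $\Real(\alpha)<0$ one splits $I^{-\alpha}=I^{\,n-m'}\circ I^{\,m'-n-\alpha}$, where $m'=\lfloor\Real(n+\alpha)\rfloor+1$ if $\Real(n+\alpha)\ge0$ and $m'=0$ otherwise, and cancels the $n-m'$ integer integrations against derivatives in one stroke. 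That direct argument must explicitly locate where the order crosses from negative to nonnegative real part; your one-step-at-a-time induction is what lets you avoid ever tracking that transition, exactly as you say, at the modest cost of carrying a stronger inductive hypothesis. Either way the analytic ingredients are identical (the semigroup property of integrals and the fundamental theorem of calculus), and your handling of the regularity issue, noting that $\frac{\mathrm{d}}{\mathrm{d}x}I^{1}v=v$ needs only continuity of $v$ and that the ``sufficiently differentiable'' hypothesis licenses the repeated differentiations, is appropriate.
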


\begin{lemma}\label{lem:RLS} 
Riemann--Liouville fractional differintegrals of Riemann-Liouville fractional integrals have a semigroup property. That is,
	\begin{align*}
	\prescript{RL}{a}D^{\alpha}_{x} \left(\prescript{RL}{a}D^{\beta}_{x} u(x)\right)=\prescript{RL}{a}D^{\alpha+\beta}_{x} u(x),
	\end{align*}
for any $\alpha,\beta\in \mathbb{C}$ such that $\Real(\beta)<0$, any $a\in\mathbb{R}$ and sufficiently differentiable function $u$ on $[a,b]$.
\end{lemma}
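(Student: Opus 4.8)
The plan is to exploit the hypothesis $\Real(\beta)<0$, which by the convention $\prescript{RL}{a}D^{\beta}_{x}=\prescript{RL}{a}I^{-\beta}_{x}$ (with $\Real(-\beta)>0$) means that the inner operator is genuinely a fractional integral. Writing $v(x)\coloneqq\prescript{RL}{a}D^{\beta}_{x}u(x)=\prescript{RL}{a}I^{-\beta}_{x}u(x)$, the task reduces to showing $\prescript{RL}{a}D^{\alpha}_{x}v=\prescript{RL}{a}D^{\alpha+\beta}_{x}u$, and I would organise the argument according to the sign of $\Real(\alpha)$.

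When $\Real(\alpha)<0$, both operators are fractional integrals, so $\prescript{RL}{a}D^{\alpha}_{x}v=\prescript{RL}{a}I^{-\alpha}_{x}\prescript{RL}{a}I^{-\beta}_{x}u$, and since $\Real(-\alpha)>0$ and $\Real(-\beta)>0$ the semigroup property of Lemma~\ref{lem:sg} collapses this to $\prescript{RL}{a}I^{-\alpha-\beta}_{x}u=\prescript{RL}{a}D^{\alpha+\beta}_{x}u$, using $\Real(-\alpha-\beta)>0$. This case is immediate.

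The substantive case is $\Real(\alpha)>0$ (the purely imaginary case $\Real(\alpha)=0$ being handled identically with $n=1$). Here I would unfold the definition $\prescript{RL}{a}D^{\alpha}_{x}v=\frac{\mathrm{d}^n}{\mathrm{d}x^n}\prescript{RL}{a}I^{n-\alpha}_{x}v$ with $n=\lfloor\Real(\alpha)\rfloor+1$, substitute $v=\prescript{RL}{a}I^{-\beta}_{x}u$, and merge the two fractional integrals via Lemma~\ref{lem:sg} (legitimate since $\Real(n-\alpha)>0$ and $\Real(-\beta)>0$) to obtain $\frac{\mathrm{d}^n}{\mathrm{d}x^n}\prescript{RL}{a}I^{n-\alpha-\beta}_{x}u$. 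The final step is to recognise $\frac{\mathrm{d}^n}{\mathrm{d}x^n}=\prescript{RL}{a}D^{n}_{x}$ and $\prescript{RL}{a}I^{n-\alpha-\beta}_{x}=\prescript{RL}{a}D^{\alpha+\beta-n}_{x}$, so that Lemma~\ref{lem:st} yields $\prescript{RL}{a}D^{n}_{x}\prescript{RL}{a}D^{\alpha+\beta-n}_{x}u=\prescript{RL}{a}D^{\alpha+\beta}_{x}u$, as required.

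The step I expect to be the crux is this last collapse of the integer derivative with the remaining fractional integral. What makes it work smoothly is precisely that Lemma~\ref{lem:st} holds for an \emph{arbitrary} complex order following the $\mathbb{N}$-order derivative: this is what lets me conclude $\prescript{RL}{a}D^{\alpha+\beta}_{x}u$ uniformly, whether $\Real(\alpha+\beta)$ turns out to be positive, negative, or zero, without re-splitting the definition of $\prescript{RL}{a}D^{\alpha+\beta}_{x}$ into further subcases. The only care needed is to verify that the real-part conditions required by Lemma~\ref{lem:sg} are genuinely met at each application, which the hypothesis $\Real(\beta)<0$ guarantees throughout.
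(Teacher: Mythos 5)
Your proposal is correct, and in fact the paper offers no proof to compare it against: Lemma~\ref{lem:RLS} is stated in the Preliminaries as a known classical result (alongside Lemmas~\ref{lem:sg} and~\ref{lem:st}), quoted from the standard literature without argument. Your derivation is the standard one and it is sound: the hypothesis $\Real(\beta)<0$ makes the inner operator a genuine integral $\prescript{RL}{a}I^{-\beta}_{x}$; the case $\Real(\alpha)<0$ collapses immediately by Lemma~\ref{lem:sg}; and in the case $\Real(\alpha)\geq 0$ you correctly verify the real-part conditions needed to merge $\prescript{RL}{a}I^{n-\alpha}_{x}\circ\prescript{RL}{a}I^{-\beta}_{x}$ into $\prescript{RL}{a}I^{n-\alpha-\beta}_{x}$ (both $\Real(n-\alpha)>0$ and $\Real(-\beta)>0$ hold), identify this with $\prescript{RL}{a}D^{\alpha+\beta-n}_{x}$ via the sign convention (legitimate since $\Real(\alpha+\beta-n)<0$), and finish with Lemma~\ref{lem:st}. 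Your closing observation is also the right one to emphasise: it is precisely because Lemma~\ref{lem:st} holds for an arbitrary complex order following the integer-order derivative that the conclusion $\prescript{RL}{a}D^{\alpha+\beta}_{x}u$ comes out uniformly, with no further case-splitting on the sign of $\Real(\alpha+\beta)$.
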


\subsection{Fractional calculus with respect to functions} \label{Subsec:wrt}
Here, we review the fractional operators seen in the previous part but now with respect to functions~\cite{Almeida2017,Erdelyi,Osler,FFS}.

\begin{definition}[Fractional integral with respect to function] \label{Def:Iwrt}
	Let $a\in\mathbb{R}$ be a constant, $u\in L^1[a,b]$ a function, $\alpha\in \mathbb{C}$ such that $\Real(\alpha)>0$, and $\varphi$ a monotonic $C^1[a,b]$ function. Then, the Riemann--Liouville fractional integral of the function $u$ to order $\alpha$ with respect to the function $\varphi$ and with constant of integration $a$ is defined by:
	\begin{align*}
	\prescript{}{a}I^{\alpha}_{\varphi(x)}  u(x)&=\frac{1}{\Gamma(\alpha)} \int_{a}^{x} \varphi'(t)[\varphi(x)-\varphi(t)]^{\alpha-1} u(t)\,\mathrm{d}t,\label{99}
	\end{align*} 
	which is the fractional power of the operator of differentiation $\frac{\mathrm{d}}{\mathrm{d}\varphi(x)}=\frac{1}{\varphi'(x)}\cdot\frac{\mathrm{d}}{\mathrm{d}x}$.
\end{definition}

\begin{example} For $\varphi(x)=x$, we recover the Riemann--Liouville fractional integral of order $\alpha$ and with respect to $x$.
\end{example}

\begin{example} For $\varphi(x)=\log x$, we have
\begin{align*}
\prescript{}{a}I^{\alpha}_{\log x}u(x)=\frac{1}{\Gamma(\alpha)} \int_{a}^{x} \left(\log\frac{x}{t}\right)^{\gamma-1}\frac{u(t)}{t}\,\mathrm{d}t= \prescript{H}{a}I^{\alpha}_{x}u(x),
\end{align*}
the Hadamard fractional integral of order $\alpha$.
\end{example}

\begin{definition}[Riemann--Liouville fractional derivative with respect to function] \label{Def:Dwrt}
Let $a, b\in\mathbb{R}$ be two constants such that $a < b$, and $u,\varphi\in C^{n}[a,b]$ be two functions such that $\varphi'(x)>0$ for all $x\in[a,b]$, and $\alpha\in \mathbb{C}$ be such that $\Real(\alpha)\geq0$. Then, the Riemann--Liouville fractional derivative with order $\alpha$ of the function $u$ with respect to the function $\varphi$ and with constant of integration $a$ is given by: 
	\begin{align*}
	\prescript{RL}{a}D^{\alpha}_{\varphi(x)}&u(x)= \left(\frac{1}{\varphi'(x)}\cdot\frac{\mathrm{d}}{\mathrm{d}x}\right)^n\prescript{RL}{a}I^{n-\alpha}_{\varphi(x)}u(x),
	\end{align*}
	with $n-1\leq \Real(\alpha) < n\in \mathbb{Z}^{+}$. 
\end{definition} 

\begin{example} For $\varphi(x)=x$, we recover the Riemann--Liouville fractional derivative with respect to $x$.
\end{example}

\begin{definition}[Caputo fractional derivative with respect to function] \label{Def:Cwrt}
Let $a, b\in\mathbb{R}$ be two constants such that $a < b$, and $u,\varphi\in C^{n}[a,b]$ be two functions such that $\varphi'(x)>0$ for all $x\in[a,b]$, and $\alpha\in \mathbb{C}$ be such that $\Real(\alpha)\geq0$. Then, the Caputo fractional derivative of the function $u$ with order $\alpha$ with respect to the function $\varphi$ with constant of integration $a$ is defined by:
	\begin{align*}
	\prescript{C}{a}D^{\alpha}_{\varphi(x)}&u(x)=\prescript{}{a}I^{n-\alpha}_{\varphi(x)} \left(\frac{1}{\varphi'(x)}\cdot\frac{\mathrm{d}}{\mathrm{d}x}\right)^n\;  u(x),
	\end{align*} 
	with $n-1\leq \Real(\alpha)< n\in \mathbb{Z}^{+}$.
\end{definition}



\textbf{Conjugation relation.} The fractional operators with respect to functions can be written as conjugations (in the group theory sense) of the original fractional operators with some operators of composition. This allows many results on fractional operators with respect to functions to be proved in a straightforward way, simply reducing the problem to one that has been already solved for the original Riemann--Liouville and Caputo operators.

Let us define the composition operator $Q_{\varphi}$ by $Q_{\varphi}f=f\circ\varphi$, i.e.,:
\begin{align}
\big(Q_{\varphi} f\big)(x)=f(\varphi(x)).
\end{align}
Note that the inverse operator is given by $Q_{\varphi}^{-1}=Q_{\varphi^{-1}}$, for any monotonic function $\varphi$. Then, the following proposition gives us some conjugation relations.

\begin{proposition}\label{pro:x1}
With all notation as in Definition \ref{Def:Iwrt}, we have the following property:
	\begin{align*}
	\prescript{RL}{a}I^{\alpha}_{\varphi(x)} = Q_{\varphi}\;\circ\;\prescript{RL}{\varphi{(a)}}I^{\alpha}_{x}\;\circ\;Q_{\varphi}^{-1}.
	\end{align*} 
With all notation as in Definition \ref{Def:Dwrt}, we have the following property:
	\begin{align*}
	\prescript{RL}{a}D^{\alpha}_{\varphi(x)} = Q_{\varphi}\;\circ \prescript{RL}{\varphi(a)}D^{\alpha}_{x}  \circ\;Q_{\varphi}^{-1}.
	\end{align*}
With all notation as in Definition \ref{Def:Cwrt}, we have the following property:
	\begin{align*}
	\prescript{C}{a}D^{\alpha}_{\varphi(x)} = Q_{\varphi}\;\circ \prescript{C}{\varphi(a)}D^{\alpha}_{x}  \circ\;Q_{\varphi}^{-1}.
	\end{align*}
\end{proposition}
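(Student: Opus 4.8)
The plan is to derive all three relations from just two elementary building blocks: a change-of-variables identity for the integral operator, and a factorisation of the $\varphi$-derivative as a conjugate of the ordinary derivative. First I would prove the integral identity by direct computation. Applying the proposed right-hand side $Q_{\varphi}\circ\prescript{RL}{\varphi(a)}I^{\alpha}_{x}\circ Q_{\varphi}^{-1}$ to a function $u$, the inner operator $Q_{\varphi}^{-1}$ produces $u\circ\varphi^{-1}$, the ordinary Riemann--Liouville integral with base point $\varphi(a)$ is applied in the new variable $s$, and finally $Q_{\varphi}$ substitutes $s=\varphi(x)$, yielding
\begin{equation*}
\frac{1}{\Gamma(\alpha)}\int_{\varphi(a)}^{\varphi(x)}\big(\varphi(x)-\tau\big)^{\alpha-1}u\big(\varphi^{-1}(\tau)\big)\,\mathrm{d}\tau.
\end{equation*}
The substitution $\tau=\varphi(t)$, with $\mathrm{d}\tau=\varphi'(t)\,\mathrm{d}t$ (legitimate since $\varphi$ is monotonic and $C^1$), transforms this into exactly the defining integral of $\prescript{}{a}I^{\alpha}_{\varphi(x)}u(x)$ from Definition \ref{Def:Iwrt}, establishing the first conjugation relation.

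The key lemma for the two derivative relations is that the elementary operator $\frac{1}{\varphi'(x)}\cdot\frac{\mathrm{d}}{\mathrm{d}x}$ is itself a conjugate of $\frac{\mathrm{d}}{\mathrm{d}x}$. I would verify by the chain rule that
\begin{equation*}
Q_{\varphi}\circ\frac{\mathrm{d}}{\mathrm{d}x}\circ Q_{\varphi}^{-1}=\frac{1}{\varphi'(x)}\cdot\frac{\mathrm{d}}{\mathrm{d}x},
\end{equation*}
since differentiating $u\circ\varphi^{-1}$ and then substituting $\varphi(x)$ introduces precisely the factor $1/\varphi'\big(\varphi^{-1}(\varphi(x))\big)=1/\varphi'(x)$. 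Because conjugation is multiplicative, raising this to the $n$th power telescopes the interior factors $Q_{\varphi}^{-1}\circ Q_{\varphi}$ to the identity, giving $\big(\tfrac{1}{\varphi'}\tfrac{\mathrm{d}}{\mathrm{d}x}\big)^{n}=Q_{\varphi}\circ\frac{\mathrm{d}^{n}}{\mathrm{d}x^{n}}\circ Q_{\varphi}^{-1}$.

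With these two ingredients in hand, the remaining relations follow by pure algebra. For the Riemann--Liouville derivative I would insert the conjugation forms of $\big(\tfrac{1}{\varphi'}\tfrac{\mathrm{d}}{\mathrm{d}x}\big)^{n}$ and of $\prescript{RL}{a}I^{n-\alpha}_{\varphi(x)}$ into Definition \ref{Def:Dwrt}, cancel the adjacent $Q_{\varphi}^{-1}\circ Q_{\varphi}$, and recognise the result $Q_{\varphi}\circ\frac{\mathrm{d}^{n}}{\mathrm{d}x^{n}}\circ\prescript{RL}{\varphi(a)}I^{n-\alpha}_{x}\circ Q_{\varphi}^{-1}$ as $Q_{\varphi}\circ\prescript{RL}{\varphi(a)}D^{\alpha}_{x}\circ Q_{\varphi}^{-1}$ via Definition \ref{Def:RLd}. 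The Caputo case is identical in spirit, simply with the integral and derivative factors composed in the opposite order so as to match Definition \ref{Def:Cwrt}. The only genuine obstacle is the first step: ensuring the change of variables and the shifted base point $\varphi(a)$ align correctly under the composition operators. Once the integral identity is secured, the two derivative identities are essentially formal consequences of the telescoping of the conjugating factors.
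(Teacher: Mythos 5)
Your proof is correct and complete. For context: the paper itself gives \emph{no} proof of Proposition~\ref{pro:x1} --- it is stated in the preliminaries as a known fact about fractional calculus with respect to functions (drawn from the cited literature on that topic) and is then used as the engine by which Propositions~\ref{pro:a}, \ref{pro:b} and \ref{pro:c} are deduced ``immediately'' from Lemmas~\ref{lem:sg}, \ref{lem:st} and \ref{lem:RLS}. So there is nothing in the paper to compare against, but your argument is the standard one and all the pieces are in place: the substitution $\tau=\varphi(t)$ turns $Q_{\varphi}\circ\prescript{RL}{\varphi(a)}I^{\alpha}_{x}\circ Q_{\varphi}^{-1}$ into the defining integral of Definition~\ref{Def:Iwrt}; the chain-rule identity $Q_{\varphi}\circ\frac{\mathrm{d}}{\mathrm{d}x}\circ Q_{\varphi}^{-1}=\frac{1}{\varphi'(x)}\cdot\frac{\mathrm{d}}{\mathrm{d}x}$ plus multiplicativity of conjugation gives the $n$-fold $\varphi$-derivative as $Q_{\varphi}\circ\frac{\mathrm{d}^n}{\mathrm{d}x^n}\circ Q_{\varphi}^{-1}$; and the two derivative relations then follow by telescoping $Q_{\varphi}^{-1}\circ Q_{\varphi}$ inside Definitions~\ref{Def:Dwrt} and \ref{Def:Cwrt} and invoking Definition~\ref{Def:RLd} (respectively the Caputo definition) at base point $\varphi(a)$. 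One small caveat worth recording: your change of variables implicitly takes $\varphi$ increasing, so that the transformed limits run from $\varphi(a)$ up to $\varphi(x)$; for a decreasing monotonic $\varphi$, which Definition~\ref{Def:Iwrt} nominally permits, the orientation of the integral and the sign of $\varphi'$ both flip and the kernel $(\varphi(x)-\varphi(t))^{\alpha-1}$ acquires a negative base. That is a defect of the definition in the decreasing case rather than of your argument (and the paper's derivative definitions assume $\varphi'>0$ outright), but a sentence restricting to increasing $\varphi$, or handling the decreasing case separately, would make the proof airtight.
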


In view of the conjugation relations given by Proposition \ref{pro:x1}, the composition properties of fractional integrals and derivatives with respect to functions follow immediately from Lemmas \ref{lem:sg}, \ref{lem:st}, \ref{lem:RLS} on the original Riemann--Liouville differintegrals.

\begin{proposition}\label{pro:a}
Fractional integrals with respect to a function are commutative and have a semigroup property. That is,
	\begin{align*}
	\prescript{RL}{a}I^{\alpha}_{\varphi(x)}\left(\prescript{RL}{a}I^{\beta}_{\varphi(x)} u(x)\right)=\prescript{RL}{a}I^{\beta}_{\varphi(x)}\left(\prescript{RL}{a}I^{\alpha}_{\varphi(x)} u(x)\right)=\prescript{RL}{a}I^{\alpha+\beta}_{\varphi(x)} u(x),
	\end{align*}  
for any $\alpha,\beta\in\mathbb{C}$ such that $\Real(\alpha)>0$, $\Real(\beta)>0$, and any $a\in\mathbb{R}$, $u\in L^1[a,b]$, and any monotonic $C^1[a,b]$ function $\varphi$.
\end{proposition}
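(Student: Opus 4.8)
The plan is to exploit the conjugation relation from Proposition~\ref{pro:x1}, which expresses the fractional integral with respect to $\varphi$ as a conjugate (in the group-theoretic sense) of the ordinary Riemann--Liouville integral, and thereby transfer the already-established commutativity and semigroup property of the latter, namely Lemma~\ref{lem:sg}, to the former. This reduces the entire statement to a purely algebraic manipulation of composed operators, together with a verification that the relevant function spaces are respected throughout.

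First I would substitute the conjugation formula into each factor, writing
\begin{align*}
\prescript{RL}{a}I^{\alpha}_{\varphi(x)}\Big(\prescript{RL}{a}I^{\beta}_{\varphi(x)} u\Big) = \Big(Q_{\varphi}\circ\prescript{RL}{\varphi(a)}I^{\alpha}_{x}\circ Q_{\varphi}^{-1}\Big)\circ\Big(Q_{\varphi}\circ\prescript{RL}{\varphi(a)}I^{\beta}_{x}\circ Q_{\varphi}^{-1}\Big)u.
\end{align*}
The central observation is that the composition operators cancel in the middle: since $\varphi$ is monotonic, $Q_{\varphi}$ is invertible with $Q_{\varphi}^{-1}=Q_{\varphi^{-1}}$, so that $Q_{\varphi}^{-1}\circ Q_{\varphi}$ is the identity. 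This collapses the expression to
\begin{align*}
Q_{\varphi}\circ\prescript{RL}{\varphi(a)}I^{\alpha}_{x}\circ\prescript{RL}{\varphi(a)}I^{\beta}_{x}\circ Q_{\varphi}^{-1}u.
\end{align*}

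Next I would invoke Lemma~\ref{lem:sg}, applied to the two inner ordinary Riemann--Liouville integrals, which are both taken with the same constant of integration $\varphi(a)$ and act on the function $Q_{\varphi}^{-1}u=u\circ\varphi^{-1}$. That lemma supplies both the equality of the two orders of composition (commutativity) and their collapse to a single integral of order $\alpha+\beta$. Re-conjugating by $Q_{\varphi}$ on the outside then returns precisely $\prescript{RL}{a}I^{\alpha+\beta}_{\varphi(x)}u$, which is the desired conclusion; the commutativity half of the statement follows in exactly the same manner, since the inner commutativity furnished by Lemma~\ref{lem:sg} is manifestly preserved under the conjugation.

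The one point requiring genuine care, rather than pure algebra, is the function-space bookkeeping: in order to apply Lemma~\ref{lem:sg} I must know that $Q_{\varphi}^{-1}u\in L^1[\varphi(a),\varphi(b)]$. This is where the hypotheses on $\varphi$ enter. Monotonicity guarantees that $\varphi^{-1}$ exists, and the $C^1[a,b]$ assumption on the compact interval bounds the Jacobian of the change of variables $t\mapsto\varphi(t)$, so that integrability of $u$ transfers to integrability of $u\circ\varphi^{-1}$ on the image interval. Once this membership is confirmed, every step above is legitimate and the proof is complete. I expect this verification to be the only real obstacle, and even it is mild given the stated regularity of $\varphi$.
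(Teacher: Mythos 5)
Your proof is correct and takes exactly the paper's route: the paper obtains Proposition~\ref{pro:a} precisely by combining the conjugation relation of Proposition~\ref{pro:x1} with the classical semigroup property of Lemma~\ref{lem:sg}, just as you do, with your cancellation of $Q_{\varphi}^{-1}\circ Q_{\varphi}$ in the middle being the whole argument. Your added check that $Q_{\varphi}^{-1}u\in L^1[\varphi(a),\varphi(b)]$ is a sensible piece of bookkeeping that the paper leaves implicit.
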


\begin{proposition}\label{pro:b}
Standard ($\mathbb{N}$-order) derivatives with respect to a function of Riemann--Liouville differintegrals with respect to the same function have a semigroup property. That is, 
	\begin{align*}
	\left(\frac{1}{\varphi'(x)}\cdot\frac{\mathrm{d}}{\mathrm{d}x}\right)^n\left(\prescript{RL}{a}D^{\alpha}_{\varphi(x)} u(x)\right)&=\prescript{RL}{a}D^{n+\alpha}_{\varphi(x)} u(x).
	\end{align*}
for any $\alpha \in \mathbb{C}$, $n\in \mathbb{N}$, $a\in \mathbb{R}$, sufficiently differentiable function $u$, and monotonic smooth function $\varphi$.
\end{proposition}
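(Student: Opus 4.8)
The plan is to exploit the conjugation relations of Proposition~\ref{pro:x1} together with the semigroup property of Lemma~\ref{lem:st}, thereby reducing the $\varphi$-dependent statement to the already-established Riemann--Liouville case. The starting observation is that the first-order operator appearing on the left-hand side is itself a conjugate of the ordinary derivative:
\begin{align*}
\frac{1}{\varphi'(x)}\cdot\frac{\mathrm{d}}{\mathrm{d}x} = Q_{\varphi}\circ\frac{\mathrm{d}}{\mathrm{d}x}\circ Q_{\varphi}^{-1}.
\end{align*}
I would verify this by a direct computation: applying the right-hand side to a test function $f$ and using the chain rule together with $(\varphi^{-1})'(x)=1/\varphi'(\varphi^{-1}(x))$ recovers exactly $f'(x)/\varphi'(x)$.

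First I would iterate this identity $n$ times. Because each interior pairing $Q_{\varphi}^{-1}\circ Q_{\varphi}$ collapses to the identity, the powers telescope to give
\begin{align*}
\left(\frac{1}{\varphi'(x)}\cdot\frac{\mathrm{d}}{\mathrm{d}x}\right)^n = Q_{\varphi}\circ\frac{\mathrm{d}^n}{\mathrm{d}x^n}\circ Q_{\varphi}^{-1}.
\end{align*}
Next I would invoke the conjugation relation $\prescript{RL}{a}D^{\alpha}_{\varphi(x)} = Q_{\varphi}\circ\prescript{RL}{\varphi(a)}D^{\alpha}_{x}\circ Q_{\varphi}^{-1}$ from Proposition~\ref{pro:x1}. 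Substituting both expressions into the left-hand side of the proposition and cancelling the middle $Q_{\varphi}^{-1}\circ Q_{\varphi}$ leaves
\begin{align*}
\left(\frac{1}{\varphi'(x)}\cdot\frac{\mathrm{d}}{\mathrm{d}x}\right)^n\prescript{RL}{a}D^{\alpha}_{\varphi(x)} = Q_{\varphi}\circ\frac{\mathrm{d}^n}{\mathrm{d}x^n}\circ\prescript{RL}{\varphi(a)}D^{\alpha}_{x}\circ Q_{\varphi}^{-1}.
\end{align*}

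Then I would recognise $\frac{\mathrm{d}^n}{\mathrm{d}x^n}$ as the integer-order Riemann--Liouville derivative $\prescript{RL}{\varphi(a)}D^{n}_{x}$ (which follows directly from Definition~\ref{Def:RLd}, since $\prescript{RL}{\varphi(a)}I^{1}_{x}$ differentiated enough times returns the plain derivative) and apply Lemma~\ref{lem:st}, valid for any $\alpha\in\mathbb{C}$ and $n\in\mathbb{N}$, to collapse $\prescript{RL}{\varphi(a)}D^{n}_{x}\circ\prescript{RL}{\varphi(a)}D^{\alpha}_{x}$ into $\prescript{RL}{\varphi(a)}D^{n+\alpha}_{x}$. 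Finally, reading the conjugation relation of Proposition~\ref{pro:x1} backwards at order $n+\alpha$ converts $Q_{\varphi}\circ\prescript{RL}{\varphi(a)}D^{n+\alpha}_{x}\circ Q_{\varphi}^{-1}$ back into $\prescript{RL}{a}D^{n+\alpha}_{\varphi(x)}$, which is precisely the claimed identity.

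The one place demanding care — and the main obstacle — is justifying these operator manipulations at the level of functions rather than as formal symbols: I must ensure that the intermediate object $Q_{\varphi}^{-1}u$ is regular enough on $[\varphi(a),\varphi(b)]$ for the integer derivatives and the differintegral $\prescript{RL}{\varphi(a)}D^{\alpha}_{x}$ to be well defined, and that the cancellations $Q_{\varphi}^{-1}\circ Q_{\varphi}=\mathrm{id}$ are legitimate on the relevant function spaces. Since $\varphi$ is monotonic and smooth with $\varphi'>0$, the substitution is a smooth diffeomorphism onto its image, so the hypothesis of ``sufficiently differentiable $u$'' transfers directly to sufficient differentiability of $Q_{\varphi}^{-1}u$; once this is observed, every composition above is well defined and the algebra of the cancellation is routine.
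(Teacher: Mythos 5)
Your proof is correct and takes essentially the same route as the paper: the paper disposes of Proposition~\ref{pro:b} by remarking that it follows immediately from the conjugation relations of Proposition~\ref{pro:x1} combined with Lemma~\ref{lem:st}, which is exactly the argument you give. Your explicit verification that $\frac{1}{\varphi'(x)}\cdot\frac{\mathrm{d}}{\mathrm{d}x}=Q_{\varphi}\circ\frac{\mathrm{d}}{\mathrm{d}x}\circ Q_{\varphi}^{-1}$, the telescoping of the $n$-fold conjugation, and the cancellation of the inner $Q_{\varphi}^{-1}\circ Q_{\varphi}$ simply fill in the details that the paper leaves implicit.
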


\begin{proposition}\label{pro:c}
Differintegrals of fractional integrals, with respect to functions and with derivatives in the Riemann--Liouville sense, have a semigroup property. That is,
	\begin{align*}
	\prescript{RL}{a}D^{\alpha}_{\varphi(x)} \left(\prescript{RL}{a}D^{\beta}_{\varphi(x)} u(x)\right)=\prescript{RL}{a}D^{\alpha+\beta}_{\varphi(x)} u(x),
	\end{align*}
for any $\alpha,\beta\in \mathbb{C}$ such that $\Real(\beta)<0$, any $a\in\mathbb{R}$, and any sufficiently differentiable function $u$ and monotonic smooth function $\varphi$.
\end{proposition}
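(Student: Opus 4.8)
The plan is to exploit the conjugation relation from Proposition \ref{pro:x1} in order to reduce the claim to the corresponding semigroup property for ordinary Riemann--Liouville differintegrals already established in Lemma \ref{lem:RLS}. The key structural observation is that composing two operators, each written as a conjugation by the same operator $Q_{\varphi}$, produces an internal cancellation $Q_{\varphi}^{-1}\circ Q_{\varphi}=\mathrm{Id}$, so that the composition telescopes into a single conjugation of a product of standard operators. This means no direct manipulation of the integral kernels will be required.

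Concretely, I would first rewrite each factor using Proposition \ref{pro:x1}, noting that both operators share the same constant of integration $a$ and hence the same conjugated base point $\varphi(a)$:
\begin{align*}
\prescript{RL}{a}D^{\alpha}_{\varphi(x)}\circ\prescript{RL}{a}D^{\beta}_{\varphi(x)} = \left(Q_{\varphi}\circ\prescript{RL}{\varphi(a)}D^{\alpha}_{x}\circ Q_{\varphi}^{-1}\right)\circ\left(Q_{\varphi}\circ\prescript{RL}{\varphi(a)}D^{\beta}_{x}\circ Q_{\varphi}^{-1}\right).
\end{align*}
The central pair $Q_{\varphi}^{-1}\circ Q_{\varphi}$ cancels, leaving
\begin{align*}
\prescript{RL}{a}D^{\alpha}_{\varphi(x)}\circ\prescript{RL}{a}D^{\beta}_{\varphi(x)} = Q_{\varphi}\circ\left(\prescript{RL}{\varphi(a)}D^{\alpha}_{x}\circ\prescript{RL}{\varphi(a)}D^{\beta}_{x}\right)\circ Q_{\varphi}^{-1}.
\end{align*}

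Next I would invoke Lemma \ref{lem:RLS}: since $\Real(\beta)<0$, the inner composition of standard Riemann--Liouville differintegrals collapses to $\prescript{RL}{\varphi(a)}D^{\alpha+\beta}_{x}$. Substituting this, and then recognising the remaining outer conjugation as $\prescript{RL}{a}D^{\alpha+\beta}_{\varphi(x)}$ by one further application of Proposition \ref{pro:x1}, yields the desired identity directly.

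The only point requiring care — and the main, though modest, obstacle — is the verification that the hypotheses of Lemma \ref{lem:RLS} genuinely transfer across the conjugation. One must check that $Q_{\varphi}^{-1}u$ remains sufficiently differentiable for the inner lemma to apply, which follows because $\varphi$ is monotonic and smooth, so that $Q_{\varphi}^{-1}=Q_{\varphi^{-1}}$ preserves the relevant regularity; one must also confirm that the base point is consistently $\varphi(a)$ in both factors, which is immediate as both differintegrals with respect to $\varphi$ share the constant of integration $a$. Given these checks, the cancellation and reduction are purely formal.
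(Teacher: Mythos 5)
Your proposal is correct and is essentially the paper's own argument: the paper proves Proposition \ref{pro:c} exactly by remarking that, in view of the conjugation relations of Proposition \ref{pro:x1}, the semigroup property follows immediately from Lemma \ref{lem:RLS} for the ordinary Riemann--Liouville operators. You have simply written out the telescoping cancellation $Q_{\varphi}^{-1}\circ Q_{\varphi}=\mathrm{Id}$ and the regularity transfer that the paper leaves implicit.
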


\subsection{Fractional calculus with analytic kernels}\label{chap:4}

Here, we review the fractional operators with general analytic kernels~\cite{BF2019,FOB}.

\begin{definition}[\cite{FOB}]\label{def:F}
	Let $[a,b]$ be a real interval, $\alpha,\beta\in\mathbb{C}$ with $\Real(\alpha)>0$, $\Real(\beta)>0$, and $R\in\mathbb{R}^+$ satisfying $R > (b-a)^{\Real(\beta)}$. Let $\mathcal{A}$ be a function analytic on the complex disc $\mathcal{D}(0,R)$ and defined on this disc by the locally uniformly convergent power series \begin{align}
	\mathcal{A}(x)=\sum_{n=0}^{\infty} a_n x^n, \label{41}	
	\end{align}
	with the coefficients $a_n=a_n (\alpha, \beta)$ which can depend on $\alpha$ and $\beta$ if desired. Then, the generalised fractional integral of a function $u\in L^1[a,b]$ with kernel $\mathcal{A}$ is given by:
	\begin{align}
\prescript{\mathcal{A}}{a}I^{\alpha, \beta}_{x}u(x) \coloneqq\int_{a}^{x} (x-t)^{\alpha-1} \mathcal{A}\left((x-t)^{\beta}\right)u(t)\, \mathrm{d}t.\label{E}
	\end{align}  
\end{definition}

\begin{remark} \label{def:2}
If $\mathcal{A}$ is an analytic function as defined by \eqref{41}, we define a transformed function $\mathcal{A}_{\Gamma}$ which will be very useful for our work as follows:
	\begin{align*}
	\mathcal{A}_{\Gamma}(x)=\sum_{n=0}^{\infty} a_n  \Gamma(\beta n+\alpha)\; x^n.
	\end{align*}  	
\end{remark}

Under the given assumptions that $\Real(\alpha)>0$ and $\Real(\beta)>0$, the operators $\prescript{\mathcal{A}}{a}I^{\alpha, \beta}_{x}$ defined in Definition \ref{def:F} comprise a commutative family of bounded operators from $L^1[a,b]$ to itself \cite{FOB}.

\begin{proposition}[Series formula \cite{FOB}] \label{lem:max}
With all notation as in Definition~\ref{def:F}, the formula of $\prescript{\mathcal{A}}{a}I^{\gamma, \delta} u$ as a locally uniformly convergent series is given by :
	\begin{align}
	\prescript{\mathcal{A}}{a}I^{\alpha, \beta}_xu(x)=\sum_{n=0}^{\infty} a_n \Gamma(\beta n+\alpha) \prescript{RL}{a}I^{\alpha+\beta n}_xu(x)=\mathcal{A}_{\Gamma}\left(\prescript{RL}{a}I^{\beta}_x\right) \prescript{RL}{a}I^{\alpha}_x u(x).
	\end{align} 
\end{proposition}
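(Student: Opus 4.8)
The plan is to substitute the defining power series~\eqref{41} for $\mathcal{A}$ directly into the integral~\eqref{E} and then integrate term by term. Writing $\mathcal{A}\big((x-t)^{\beta}\big)=\sum_{n=0}^{\infty}a_n(x-t)^{\beta n}$ and inserting this into the definition gives, formally,
\begin{align*}
\prescript{\mathcal{A}}{a}I^{\alpha,\beta}_x u(x)=\sum_{n=0}^{\infty}a_n\int_a^x (x-t)^{\alpha+\beta n-1}u(t)\,\dt,
\end{align*}
and each integral on the right is, by Definition~\ref{Def:RLi}, exactly $\Gamma(\alpha+\beta n)\,\prescript{RL}{a}I^{\alpha+\beta n}_x u(x)$. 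This already yields the first claimed equality, provided the interchange of summation and integration can be justified.

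To justify that interchange, I would fix $x\in[a,b]$ and observe that as $t$ runs over $[a,x]$ the argument satisfies $\big|(x-t)^{\beta}\big|=(x-t)^{\Real(\beta)}\leq (b-a)^{\Real(\beta)}$, since $x-t$ is a nonnegative real. The hypothesis $R>(b-a)^{\Real(\beta)}$ then guarantees that this argument stays in a fixed compact subset of the disc $\mathcal{D}(0,R)$, on which the power series~\eqref{41} converges uniformly. Consequently the partial sums $S_N(t)=\sum_{n=0}^{N}a_n(x-t)^{\beta n}$ converge to $\mathcal{A}\big((x-t)^{\beta}\big)$ uniformly in $t$, and the error in the integral is controlled by $\|\mathcal{A}-S_N\|_\infty\int_a^x (x-t)^{\Real(\alpha)-1}|u(t)|\,\dt$. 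Since $\Real(\alpha)>0$ makes the weakly singular factor $(x-t)^{\Real(\alpha)-1}$ integrable against $u\in L^1[a,b]$, this error tends to $0$, legitimising the term-by-term integration.

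For the second equality, I would invoke the semigroup property of Riemann--Liouville integrals (Lemma~\ref{lem:sg}) to write $\prescript{RL}{a}I^{\alpha+\beta n}_x=\big(\prescript{RL}{a}I^{\beta}_x\big)^{n}\,\prescript{RL}{a}I^{\alpha}_x$ for each $n$; substituting this into the first series and recalling the definition of $\mathcal{A}_{\Gamma}$ from Remark~\ref{def:2} identifies the sum with $\mathcal{A}_{\Gamma}\big(\prescript{RL}{a}I^{\beta}_x\big)\prescript{RL}{a}I^{\alpha}_x u(x)$, interpreting the analytic function $\mathcal{A}_{\Gamma}$ as a power series in the operator $\prescript{RL}{a}I^{\beta}_x$. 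To confirm that the resulting series converges locally uniformly, I would estimate the $n$th term $a_n\Gamma(\alpha+\beta n)\prescript{RL}{a}I^{\alpha+\beta n}_x u(x)=a_n\int_a^x(x-t)^{\alpha+\beta n-1}u(t)\,\dt$, in which the factor $\Gamma(\alpha+\beta n)$ cancels against the one hidden in the Riemann--Liouville integral; bounding $(x-t)^{\Real(\beta)n}\leq(b-a)^{\Real(\beta)n}$ shows each term is dominated by $|a_n|(b-a)^{\Real(\beta)n}\int_a^x(x-t)^{\Real(\alpha)-1}|u(t)|\,\dt$, and the scalar series $\sum_n|a_n|(b-a)^{\Real(\beta)n}$ converges precisely because $(b-a)^{\Real(\beta)}<R$ lies within the radius of convergence.

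The main obstacle is the rigorous handling of the two interchanges of limit and integral, which both hinge on the same inequality $(b-a)^{\Real(\beta)}<R$. This is the one place where the seemingly technical hypothesis $R>(b-a)^{\Real(\beta)}$ is indispensable: it simultaneously keeps the kernel argument inside $\mathcal{D}(0,R)$ for the uniform convergence and ensures the dominating scalar series converges. Everything else --- the identification of each term as a Riemann--Liouville integral and the recognition of $\mathcal{A}_{\Gamma}$ acting on the integral operator --- is routine bookkeeping.
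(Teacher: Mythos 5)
Your proposal is correct and follows essentially the same route as the paper: the paper's own argument (given for the generalised version in Theorem~\ref{lem:series}, of which this proposition is the $\varphi(x)=x$ case) likewise substitutes the power series~\eqref{41} into the integral~\eqref{E}, justifies the interchange of sum and integral by local uniform convergence on the disc via the bound $(b-a)^{\Real(\beta)}<R$, and recognises each resulting term as $a_n\Gamma(\beta n+\alpha)\,\prescript{RL}{a}I^{\alpha+\beta n}_x u(x)$. Your additional verification of the operator form $\mathcal{A}_{\Gamma}\left(\prescript{RL}{a}I^{\beta}_x\right)\prescript{RL}{a}I^{\alpha}_x$ via the semigroup property (Lemma~\ref{lem:sg}) and the domination estimate $\sum_n|a_n|(b-a)^{\Real(\beta)n}<\infty$ is exactly the intended reading of the second equality, so nothing is missing.
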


\begin{proposition}[Semigroup property \cite{FOB}]\label{thm:spo}
We use all notation as in Definition~\ref{def:F}. For a semigroup property in one parameter, the equality
	\begin{align*}
\prescript{\mathcal{A}}{a}I_x^{\alpha, \beta}\;\circ\;\prescript{\mathcal{A}}{a}I_x^{\gamma, \beta}\; u(x)&=\;\prescript{\mathcal{A}}{a}I_x^{\alpha+\gamma, \beta} u(x)
	\end{align*}
is uniformly valid (i.e., for all $\alpha,\beta,\gamma,u$) if and only if the condition below is uniformly satisfied for all positive integers $k$:
	\begin{align*}
	a_k(\alpha+\gamma,\beta) \Gamma(\beta k+\alpha+\gamma)=\sum_{m+n=k}^{} a_n(\alpha,\beta) a_m(\gamma,\beta)\Gamma(\beta n+\alpha)\Gamma(\beta m+\gamma).
	\end{align*}
For a semigroup property in two parameters, the equality
	\begin{align*}
\prescript{\mathcal{A}}{a}I_x^{\alpha, \beta}	\;\circ\;\prescript{\mathcal{A}}{a}I_x^{\gamma, \delta}\; u(x)&=\prescript{\mathcal{A}}{a}I_x^{\alpha+\gamma, \beta+\delta} u(x),
	\end{align*}
cannot be uniformly valid (i.e., for all $\alpha,\beta,\gamma,\delta,u$).
\end{proposition}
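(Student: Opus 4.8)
The plan is to reduce both statements to the Riemann--Liouville series formula of Proposition~\ref{lem:max} together with the Riemann--Liouville semigroup law of Lemma~\ref{lem:sg}. First I would expand each factor in the composition by Proposition~\ref{lem:max}, obtaining
\begin{align*}
\prescript{\mathcal{A}}{a}I_x^{\alpha,\beta}\circ\prescript{\mathcal{A}}{a}I_x^{\gamma,\delta}u(x)=\sum_{n=0}^{\infty}\sum_{m=0}^{\infty}a_n(\alpha,\beta)\,a_m(\gamma,\delta)\,\Gamma(\beta n+\alpha)\,\Gamma(\delta m+\gamma)\,\prescript{RL}{a}I^{\alpha+\beta n}_x\Big(\prescript{RL}{a}I^{\gamma+\delta m}_x u(x)\Big),
\end{align*}
where the interchange of the two infinite sums is justified by the local uniform (hence absolute) convergence asserted in Proposition~\ref{lem:max}. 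Applying Lemma~\ref{lem:sg} collapses each inner composition into a single Riemann--Liouville integral of order $\alpha+\gamma+\beta n+\delta m$.

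For the one-parameter statement we set $\delta=\beta$, so the exponents become $\alpha+\gamma+\beta(n+m)$ and depend on $(n,m)$ only through $k=n+m$. Collecting terms by $k$ yields
\begin{align*}
\prescript{\mathcal{A}}{a}I_x^{\alpha,\beta}\circ\prescript{\mathcal{A}}{a}I_x^{\gamma,\beta}u(x)=\sum_{k=0}^{\infty}\bigg(\sum_{m+n=k}a_n(\alpha,\beta)\,a_m(\gamma,\beta)\,\Gamma(\beta n+\alpha)\,\Gamma(\beta m+\gamma)\bigg)\prescript{RL}{a}I^{\alpha+\gamma+\beta k}_x u(x),
\end{align*}
while by Proposition~\ref{lem:max} the right-hand side $\prescript{\mathcal{A}}{a}I_x^{\alpha+\gamma,\beta}u(x)$ equals $\sum_{k\ge 0}a_k(\alpha+\gamma,\beta)\Gamma(\beta k+\alpha+\gamma)\prescript{RL}{a}I^{\alpha+\gamma+\beta k}_x u(x)$. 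Sufficiency of the stated coefficient condition is then immediate, as it equates the two series term by term. For necessity I would test the identity on the monomial $u(x)=(x-a)^{s}$ with $\Real(s)>-1$: since $\prescript{RL}{a}I^{\mu}_x(x-a)^s=\frac{\Gamma(s+1)}{\Gamma(s+\mu+1)}(x-a)^{s+\mu}$, both sides become power series in the exponents $s+\alpha+\gamma+\beta k$, which are pairwise distinct because $\Real(\beta)>0$. Distinct powers are linearly independent on $[a,b]$ and the common nonvanishing Gamma-prefactor cancels, so equality forces the coefficient relation for every $k$.

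For the two-parameter statement I would run the same expansion but keep $\delta\neq\beta$. Now the Riemann--Liouville orders appearing on the left are $\alpha+\gamma+\beta n+\delta m$, which range over a genuinely two-dimensional lattice of exponents, whereas the right-hand side $\prescript{\mathcal{A}}{a}I_x^{\alpha+\gamma,\beta+\delta}u(x)$ produces only the exponents $\alpha+\gamma+(\beta+\delta)k$ lying on the single diagonal $n=m=k$. Testing again on $u(x)=(x-a)^s$ and using linear independence of distinct powers, I would choose $\beta,\delta$ rationally independent, so that $\beta n+\delta m=(\beta+\delta)k$ forces $n=m=k$; every off-diagonal term, for instance the one with $(n,m)=(1,0)$ and exponent $\alpha+\gamma+\beta$, then has no counterpart on the right, and its coefficient $a_1(\alpha,\beta)a_0(\gamma,\delta)\Gamma(\alpha+\beta)\Gamma(\gamma)$ is forced to vanish. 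Since the Gamma factors never vanish and the relation is required for all admissible parameters, all such off-diagonal coefficients vanish, collapsing $\mathcal{A}$ to its constant term and degenerating the operator to the plain Riemann--Liouville integral; hence the two-parameter law cannot hold for any kernel genuinely depending on $\beta$, which is the asserted impossibility.

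The crux in both parts is the same linear-independence principle: two locally uniformly convergent series of Riemann--Liouville differintegrals of distinct orders coincide only if their coefficients agree, which I extract cleanly by evaluating on monomials and cancelling the Gamma-prefactors. In the one-parameter case this is routine once the orders are indexed by the single integer $k$. The main obstacle is the two-parameter case, where the claim is a non-existence statement quantified over all parameters: one must ensure that the two-dimensional spread of exponents cannot be reconciled with the one-dimensional target by accidental cancellations. Exploiting the rational independence of $\beta$ and $\delta$ is exactly what rules this out, reducing the impossibility to the forced vanishing of individual off-diagonal coefficients.
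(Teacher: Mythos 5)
Your proposal is correct and follows essentially the same route as the paper: this proposition is quoted from \cite{FOB} without proof, and the paper's own proofs of its $\varphi$-analogues (Theorems \ref{thm:spop} and \ref{thm:sp2p}) use exactly your skeleton --- expand both factors by the series formula (Proposition \ref{lem:max}), collapse the inner compositions by the Riemann--Liouville semigroup law (Lemma \ref{lem:sg}), reindex by $k=m+n$, and compare coefficients against the series for the target operator. Where you go beyond the paper is precisely where it merely asserts: for the ``only if'' direction you evaluate on monomials $(x-a)^s$ and invoke linear independence of distinct powers (the paper simply states that the two series are uniformly equal if and only if the coefficient identity holds), and in the two-parameter case you choose $\beta,\delta$ rationally independent so that $\beta n+\delta m=(\beta+\delta)k$ forces $n=m=k$ --- a device that the paper's diagonal/off-diagonal split actually needs (otherwise exponent coincidences could cause cancellations) but leaves implicit, resolving the matter instead by citing \cite[Theorem 2.12]{FOB}. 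One imprecision in your last step: vanishing of all off-diagonal products $a_n(\alpha,\beta)\,a_m(\gamma,\delta)$ collapses $\mathcal{A}$ to a \emph{single monomial} $a_jx^j$, not necessarily to its constant term; and a monomial kernel with $a_j(\alpha,\beta)=1/\Gamma(\beta j+\alpha)$ is a disguised Riemann--Liouville integral which \emph{does} satisfy the two-parameter law, so the impossibility must be read (as both you and the paper intend, the paper's ``$\mathcal{A}$ identically zero'' being similarly loose) as ruling out every kernel with at least two nonzero coefficients. This affects only the final phrasing, not the validity of your argument.
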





\begin{definition}[Generalised fractional derivative: Riemann--Liouville sense \cite{FOB}] \label{Def:AKdR}
	Let $[a,b]$ be a real interval, $\alpha,\beta\in\mathbb{C}$ with $\Real(\alpha)\geq0$, $\Real(\beta)>0$, and $R\in\mathbb{R}^+$ satisfying $R > (b-a)^{\Real(\beta)}$. Let $\mathcal{A}$ be an analytic function as in Definition~\ref{def:F}. Then, the generalised fractional derivative in the Riemann--Liouville sense of a sufficiently differentiable function $u\in L^1[a,b]$ with kernel $\mathcal{A}$ is given by:
	\begin{align}
	\prescript{\mathcal{A}_R}{a}{D}_{x}^{\alpha,\beta}u(x)&=\frac{\mathrm{d}^n}{\mathrm{d}x^n}\left(\prescript{\mathcal{\overline{\mathcal{A}}}}{a}{I}^{n-\alpha, \beta}u(x)\right),\;\;\;n-1\leq\Real(\alpha) < n\in \mathbb{N}, \label{20}
	\end{align}  
	where the new analytic function	$\overline{\mathcal{A}}(x) = \sum_{n=0}^{\infty}\overline{a}_{n} x^{n}$ is defined so that $\mathcal{A}_{\Gamma}\cdot\mathcal{\overline{A}}_{\Gamma}=1$.
\end{definition}

\begin{definition}[Generalised fractional derivative: Caputo sense \cite{FOB}]
With all notation as in Definition \ref{Def:AKdR}, the generalised fractional derivative in the Caputo sense of a sufficiently differentiable function $u\in L^1[a,b]$ with kernel $\mathcal{A}$ is given by:
	\begin{align}
	\prescript{\mathcal{A}_C}{a}{D}_{x}^{\alpha,\beta}u(x)&=\prescript{\mathcal{\overline{A}}}{a}I^{n-\alpha,\beta}\left(\frac{\mathrm{d}^n}{\mathrm{d}x^n} u(x)\right),\;\;\;n-1\leq\Real(\alpha) < n\in \mathbb{N}. \label{21}
	\end{align}
\end{definition}

\section{Fractional calculus with analytic kernels with respect to functions}\label{chap:5}

Having established the fundamentals of two major general classes of fractional-calculus operators, we now turn our attention to the class resulting from the combination of these two ideas. Our starting point will be the definition, presented briefly in \cite{FOB}, of the general fractional integrals with analytic kernels with respect to functions. In \cite{FOB}, this definition was presented towards the end of the paper and was not elaborated on beyond some basic discussion. Here, therefore, we shall conduct a fuller analysis and create, for the first time, a proper theory of these generalised operators.

\subsection{Generalised fractional integrals with respect to functions}

\begin{definition}[Generalised fractional integral with respect to function \cite{FOB}]\label{Def:GF}
	Let $[a,b]$ be a real interval and $\alpha,\beta\in\mathbb{C}$ with $\Real(\alpha)>0$, $\Real(\beta)>0$. Let $\mathcal{A}$ be an analytic function as in Definition~\ref{def:F}, and let $\varphi\in C^1[a,b]$ be a monotonic function as in Definition \ref{Def:Iwrt}. Then, the generalised fractional integral of the function $u$ with respect to $\varphi$ is given by the following:
	\begin{align}
	\prescript{\mathcal{A}}{a}I^{\alpha, \beta}_{\varphi(x)}u(x)&:=\int_{a}^{x} \varphi'(t)[\varphi(x)-\varphi(t)]^{\alpha-1} \mathcal{A}[(\varphi(x)-\varphi(t))^{\beta}]u(t)\,\mathrm{d}t. \label{52}
	\end{align}
	(The appropriate function space for $u$ is in fact $L^1_{\varphi}[a,b]$, but this will be defined later, in Section \ref{chap:function} on function spaces and functional analysis of the considered operators.)
\end{definition}

The formula $\eqref{52}$ is the natural way of combining the formula \eqref{E} for fractional integrals with general analytic kernels with the formalism of Section \ref{Subsec:wrt} for taking fractional-calculus operators with respect to functions. As such, it generalises both classes of fractional calculus, and also other operators which do not fall into either class, as we show in the following examples.

\begin{example}
For $\varphi(x)=x$, we recover the fractional integral with analytic kernel $\mathcal{A}$ as seen in Section \ref{chap:4} above. Thus, the whole class of fractional calculus with general analytic kernels falls within our new general class.
\end{example}

\begin{example}
For $\beta=0$ and $\mathcal{A}(x)=\frac{1}{\Gamma(\alpha)}$, we have
\begin{align*}
\prescript{^\mathcal{A}}{a}I^{\alpha,\beta}_{\varphi(x)}u(x)&\coloneqq \int_{a}^{x} \varphi'(t)(\varphi(x)-\varphi(t))^{\alpha-1} \frac{1}{\Gamma(\alpha)}u(t) \,\mathrm{d}t=\prescript{RL}{a}I^{\alpha}_{\varphi(x)} u(x),
\end{align*}  
the Riemann--Liouville fractional integral of order $\alpha$ with respect to the function $\varphi$. Thus, the whole class of fractional calculus with respect to functions, as defined in Section \ref{Subsec:wrt}, falls within our new general class.
\end{example} 


\begin{example} \label{Ex:HT}
For $\beta=1$ and $\mathcal{A}(x)=
\frac{e^{-sx}}{\Gamma(\alpha)}$ and $\varphi(x)=\log x$, we have
\begin{align*}
\prescript{^\mathcal{A}}{a}I^{\alpha,1}_{\varphi(x)} u(x)&= \frac{1}{\Gamma(\alpha)} \int_{a}^{x} \frac{1}{t}\log x-\log t]^{\alpha-1} e^{-s(\log x-\log t)} u(t)\,\mathrm{d}t\\
&=\frac{1}{\Gamma(\alpha)} \int_{a}^{x} \left(\log\frac{x}{t} \right)^{\alpha-1}\left(\frac{t}{x}\right)^s\frac{u(t)}{t}\,\mathrm{d}t\\
&=\prescript{H}{a}{I}^{\alpha,1}_{\log x}u(x),
\end{align*}
which is the Hadamard-type fractional integral of order $\alpha$ and parameter $s$. Note that Hadamard-type fractional calculus does not fall within the class of operators with analytic kernels, nor within the class of operators with respect to functions. But since it is the same as tempered fractional calculus with respect to a logarithm function \cite{FFS}, it does fall within the generalised class which we are now studying.
\end{example}

\begin{theorem}[Series formula] 
\label{lem:series} 
We consider all notation as in Definition~\ref{Def:GF}. Then, the general fractional integral $\prescript{\mathcal{A}}{a}I^{\alpha,\beta}_{\varphi(x)} u(x)$, with analytic kernel $\mathcal{A}$ and with respect to a Lipschitz continuous function $\varphi$, is given by a locally uniformly convergent series as follows:
\begin{align}
\prescript{\mathcal{A}}{a}I^{\alpha,\beta}_{\varphi(x)} u(x)=\sum_{n=0}^{\infty} a_n \Gamma(\beta n+\alpha)\prescript{RL}{a}I^{\beta n+ \alpha}_{\varphi(x)} u(x)\label{53}
\end{align}
\end{theorem}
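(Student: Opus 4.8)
The plan is to prove the series formula \eqref{53} by the most direct route: insert the defining power series of the analytic kernel $\mathcal{A}$ into the integral representation \eqref{52}, interchange the order of summation and integration, and then recognise each resulting term as a Riemann--Liouville fractional integral with respect to $\varphi$. This mirrors the proof of the corresponding series formula for the case $\varphi(x)=x$ in Proposition \ref{lem:max}, but now carried out in the $\varphi$-weighted setting.

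Concretely, first I would write $\mathcal{A}\big[(\varphi(x)-\varphi(t))^{\beta}\big]=\sum_{n=0}^{\infty} a_n (\varphi(x)-\varphi(t))^{\beta n}$ inside the integral \eqref{52}, using the convergence of the power series \eqref{41}. Combining the kernel powers via $[\varphi(x)-\varphi(t)]^{\alpha-1}\cdot(\varphi(x)-\varphi(t))^{\beta n}=[\varphi(x)-\varphi(t)]^{\alpha+\beta n-1}$, and pulling the sum outside the integral, each term becomes $a_n\int_a^x \varphi'(t)[\varphi(x)-\varphi(t)]^{\alpha+\beta n-1}u(t)\,\mathrm{d}t$. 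By Definition \ref{Def:Iwrt}, this integral is exactly $\Gamma(\beta n+\alpha)\,\prescript{RL}{a}I^{\beta n+\alpha}_{\varphi(x)}u(x)$, which yields \eqref{53} after collecting the factors.

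The one step that requires genuine justification, and which I expect to be the main obstacle, is the interchange of the infinite sum with the integral. Here the Lipschitz continuity and monotonicity of $\varphi$ do the essential work. Since $\varphi$ is monotonic and Lipschitz on $[a,b]$, for $a\le t\le x\le b$ the quantity $\varphi(x)-\varphi(t)$ is real, non-negative, and bounded above by $\varphi(b)-\varphi(a)$; hence $\big|(\varphi(x)-\varphi(t))^{\beta}\big|=(\varphi(x)-\varphi(t))^{\Real(\beta)}$ stays in a fixed compact subset of the disc $\mathcal{D}(0,R)$, the radius condition on $R$ guaranteeing room to spare just as in Definition \ref{def:F}. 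On such a compact set the power series \eqref{41} converges uniformly, so the tails $\sum_{n\ge N} a_n(\varphi(x)-\varphi(t))^{\beta n}$ tend to zero uniformly in $t$. Multiplying by the fixed integrable kernel $\varphi'(t)[\varphi(x)-\varphi(t)]^{\alpha-1}u(t)$, which lies in $L^1$ because $\Real(\alpha)>0$ and $u\in L^1$, and integrating, the tail contribution is bounded by the uniform tail bound times $\int_a^x|\varphi'(t)|\,|\varphi(x)-\varphi(t)|^{\Real(\alpha)-1}|u(t)|\,\mathrm{d}t$, which vanishes as $N\to\infty$. This legitimises the term-by-term integration, and the same uniform control in $x$ over compact subintervals gives the claimed local uniform convergence of the resulting series.

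An alternative, and perhaps more conceptual, route would be to first establish the conjugation relation $\prescript{\mathcal{A}}{a}I^{\alpha,\beta}_{\varphi(x)}=Q_{\varphi}\circ\prescript{\mathcal{A}}{\varphi(a)}I^{\alpha,\beta}_{x}\circ Q_{\varphi}^{-1}$, by the change of variables $\tau=\varphi(t)$ in \eqref{52} exactly as in Proposition \ref{pro:x1}, and then simply conjugate the already-proven series formula of Proposition \ref{lem:max}, using the analogous conjugation for each $\prescript{RL}{a}I^{\beta n+\alpha}_{\varphi(x)}$. I would keep the direct computation as the primary proof, since it is self-contained and makes the convergence hypotheses transparent, while noting the conjugation argument as a shortcut that reduces everything to the already-settled $\varphi(x)=x$ case.
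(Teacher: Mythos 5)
Your proposal is correct and takes essentially the same route as the paper's own proof: expand $\mathcal{A}$ as its power series inside \eqref{52}, use the Lipschitz continuity of $\varphi$ to keep $(\varphi(x)-\varphi(t))^{\beta}$ inside the disc $\mathcal{D}(0,R)$ where the series converges locally uniformly, interchange sum and integral, and identify each term as $a_n\Gamma(\beta n+\alpha)\,\prescript{RL}{a}I^{\beta n+\alpha}_{\varphi(x)}u(x)$. If anything, your explicit tail estimate against the $L^1$ kernel $\varphi'(t)[\varphi(x)-\varphi(t)]^{\alpha-1}u(t)$ is spelled out more carefully than the paper's justification, which invokes the uniform convergence directly.
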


\begin{proof}
Starting from the definition \eqref{52}, we have
	\begin{align*}
		\prescript{\mathcal{A}}{a}I^{\gamma,\delta}_{\varphi(x)} u(x)&=\int_{a}^{x} \varphi'(t)(\varphi(x)-\varphi(t))^{\gamma-1}\mathcal{A}\left[(\varphi(x)-\varphi(t))^{\delta}\right]u(t)\,\mathrm{d}t\\
		&=\int_{a}^{x} \varphi'(t)(\varphi(x)-\varphi(t))^{\gamma-1}\sum_{n=0}^{\infty}\left[a_n(\varphi(x)-\varphi(t))^{\delta n}\right] u(t)\,\mathrm{d}t\\
		&=\int_{a}^{x} \varphi'(t)\sum_{n=0}^{\infty} a_n (\varphi(x)-\varphi(t))^{\delta n+\gamma-1} u(t)\,\mathrm{d}t.
	\end{align*}
	Now, since the function $\varphi$ is Lipschitz continuous on the analytic disc of radius $R$, then with the condition $a\leq t\leq x\leq b$, we have the above series which is locally uniformly convergent, since $0\leq |\varphi(x)-\varphi(t)|^{\delta}\leq |x-t|^{\delta}\leq (b-a)^{\Real(\delta)}<R$ and the series $\mathcal{A}$ is locally uniformly continuous on $\mathcal{D}(0,R)$. Therefore, we can interchange the integral and summation and obtain
	\begin{align*}
		\prescript{\mathcal{A}}{a}I^{\gamma,\delta}_{\varphi(x)} u(x) &=\sum_{n=0}^{\infty} \int_{a}^{x} a_n \varphi'(t)(\varphi(x)-\varphi(t))^{\delta n+\gamma-1} u(t)\,\mathrm{d}t \\
		&=\sum_{n=0}^{\infty} a_n \Gamma(\delta n+\gamma) \left[\frac{1}{\Gamma(\delta n+\gamma)}\int_{a}^{x}\varphi'(t)(\varphi(x)-\varphi(t))^{\delta n}u(t)\,\mathrm{d}t\right] \\
		&=\sum_{n=0}^{\infty} a_n \Gamma(\delta n+\gamma)\prescript{RL}{a}I^{\delta n+\gamma}_{\varphi(x)} u(x),	
	\end{align*}
	which ends the proof of the series formula.
\end{proof}

\begin{theorem}[Commutativity]
	\label{thm:com}
With all notation as in Definition~\ref{Def:GF}, the set given by 
	\begin{align*}
\left\{\prescript{\mathcal{A}}{a}I^{\alpha,\beta}_{\varphi(x)}\; :\; \alpha, \beta\in \mathbb{C}, \Real(\alpha)>0, Re(\beta)>0\right\}
	\end{align*}
	is a commutative family of operators.
\end{theorem}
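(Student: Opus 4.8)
The plan is to reduce the commutativity of the general operators to the already-established commutativity of the Riemann--Liouville fractional integrals with respect to $\varphi$ (Proposition~\ref{pro:a}), using the series formula of Theorem~\ref{lem:series} as the bridge. Fix two operators $\prescript{\mathcal{A}}{a}I^{\alpha,\beta}_{\varphi(x)}$ and $\prescript{\mathcal{A}}{a}I^{\gamma,\delta}_{\varphi(x)}$ from the family; I must show that their composition is unchanged when the order is reversed.

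First I would expand each factor via Theorem~\ref{lem:series}, so that the composition becomes a double series
\begin{align*}
\prescript{\mathcal{A}}{a}I^{\alpha,\beta}_{\varphi(x)}\,\prescript{\mathcal{A}}{a}I^{\gamma,\delta}_{\varphi(x)} u(x)
&=\sum_{n=0}^{\infty}\sum_{m=0}^{\infty} a_n(\alpha,\beta)\, a_m(\gamma,\delta)\, \Gamma(\beta n+\alpha)\,\Gamma(\delta m+\gamma)\\
&\qquad\qquad\times\;
\prescript{RL}{a}I^{\beta n+\alpha}_{\varphi(x)}\,\prescript{RL}{a}I^{\delta m+\gamma}_{\varphi(x)} u(x).
\end{align*}
By Proposition~\ref{pro:a} the inner composition collapses to $\prescript{RL}{a}I^{\beta n+\alpha+\delta m+\gamma}_{\varphi(x)} u(x)$, and since the same proposition guarantees that the Riemann--Liouville integrals with respect to $\varphi$ commute, the term indexed by $(n,m)$ in this expansion coincides exactly with the term indexed by $(n,m)$ in the corresponding expansion of the reversed composition $\prescript{\mathcal{A}}{a}I^{\gamma,\delta}_{\varphi(x)}\,\prescript{\mathcal{A}}{a}I^{\alpha,\beta}_{\varphi(x)} u(x)$. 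Thus the two double series agree term by term, which proves the claim.

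The main obstacle, and the only place requiring genuine care, is justifying that the second operator may be applied term by term to the convergent series produced by the first, and that the resulting double series may be reordered. For this I would argue as in the proof of Theorem~\ref{lem:series}: since $\varphi$ is Lipschitz and bounded on $[a,b]$, the iterated kernel arguments remain within the disc $\mathcal{D}(0,R)$, so the relevant power series converge locally uniformly and the double series converges absolutely, legitimising both the term-by-term action and the rearrangement by a Fubini/Tonelli argument on the coefficients.

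Alternatively, and perhaps more transparently, I would note the conjugation relation $\prescript{\mathcal{A}}{a}I^{\alpha,\beta}_{\varphi(x)}=Q_{\varphi}\circ\prescript{\mathcal{A}}{\varphi(a)}I^{\alpha,\beta}_{x}\circ Q_{\varphi}^{-1}$, exactly analogous to Proposition~\ref{pro:x1} and verified directly by the substitution $s=\varphi(t)$ in \eqref{52}. Since conjugation by the fixed operator $Q_{\varphi}$ preserves compositions, commutativity in the present family then follows at once from the commutativity of the plain analytic-kernel integrals $\prescript{\mathcal{A}}{\varphi(a)}I^{\alpha,\beta}_{x}$ recorded immediately after Definition~\ref{def:F}, avoiding any manipulation of double series.
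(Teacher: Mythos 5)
Your main argument is correct and is essentially the paper's own proof: the paper likewise expands both factors of the composition via the series formula of Theorem~\ref{lem:series}, invokes Proposition~\ref{pro:a} to commute the inner Riemann--Liouville integrals $\prescript{RL}{a}I^{\beta n+\alpha}_{\varphi(x)}$ and $\prescript{RL}{a}I^{\delta m+\gamma}_{\varphi(x)}$, and reassembles the double series in the reversed order. If anything, you are more careful than the paper about justifying the term-by-term application of the second operator and the rearrangement of the double series, which the paper passes over silently.

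Your alternative route via conjugation is genuinely different from what the paper does, and it is worth comparing. The paper never states a conjugation relation for the analytic-kernel operators with respect to functions (Proposition~\ref{pro:x1} is only formulated for the Riemann--Liouville and Caputo cases), but the relation
\begin{equation*}
\prescript{\mathcal{A}}{a}I^{\alpha,\beta}_{\varphi(x)}
= Q_{\varphi}\circ\prescript{\mathcal{A}}{\varphi(a)}I^{\alpha,\beta}_{x}\circ Q_{\varphi}^{-1}
\end{equation*}
does follow from the substitution $s=\varphi(t)$ in \eqref{52}, exactly as you say, and since conjugation by a fixed invertible operator is an algebra homomorphism, commutativity transfers at once from the family $\bigl\{\prescript{\mathcal{A}}{\varphi(a)}I^{\alpha,\beta}_{x}\bigr\}$, whose commutativity the paper records (citing \cite{FOB}) immediately after Definition~\ref{def:F}. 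What this buys is the complete avoidance of double-series manipulations and their convergence issues; what it costs is (i) proving the conjugation identity itself (easy, but not in the paper), and (ii) checking that the analytic-kernel operators on the transformed interval $[\varphi(a),\varphi(b)]$ are legitimately defined, i.e.\ that the radius $R$ of analyticity satisfies $R>(\varphi(b)-\varphi(a))^{\Real(\beta)}$. This last point holds under the same implicit Lipschitz-type bound $|\varphi(x)-\varphi(t)|\leq|x-t|$ that the paper already uses in the proof of Theorem~\ref{lem:series}, so the conjugation argument is sound under the paper's standing hypotheses; it is arguably the cleaner of the two proofs, and it also explains conceptually the paper's later Remark that composition properties of these operators depend only on $\mathcal{A}$ and not on $\varphi$.
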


\begin{proof}
	It suffices to show that for all $\alpha,\beta,\gamma,\delta\in\mathbb{C}$ with positive real part, we have
	\begin{align}
	\prescript{\mathcal{A}}{a}I^{\alpha,\beta}_{\varphi(x)}\circ \prescript{\mathcal{A}}{a}I^{\gamma,\delta}_{\varphi(x)} u(x)&=
	\prescript{\mathcal{A}}{a}I^{\gamma,\delta}_{\varphi(x)}\circ \prescript{\mathcal{A}}{a}I^{\alpha,\beta}_{\varphi(x)} u(x),
	\end{align}
regardless of whether or not there is a semigroup property to yield operators with $\alpha+\gamma$ and $\beta+\delta$. Using Theorem~\ref{lem:series}, we have
	\begin{align*}
	\prescript{\mathcal{A}}{a}I^{\alpha,\beta}_{\varphi(x)}\circ \prescript{\mathcal{A}}{a}I^{\gamma,\delta}_{\varphi(x)} u(x)&=\sum_{n=0}^{\infty} a_n \Gamma(\beta n +\alpha)
	\prescript{RL}{a}I^{\beta n+\alpha}_{\varphi(x)}\left[\sum_{m=0}^{\infty} a_m \Gamma(\delta m +\gamma)
	\prescript{RL}{a}I^{\delta m+\gamma}_{\varphi(x)}u(x)\right]\\
	&=\sum_{n,m}^{} a_n\;\Gamma(\beta n+\alpha)\; a_m\; \Gamma(\delta m+\gamma)
	\prescript{RL}{a}I^{\beta n+\alpha}_{\varphi(x)}\circ
	\prescript{RL}{a}I^{\delta m+\gamma}_{\varphi(x)} u(x).
	\end{align*}
	But we know from Proposition \ref{pro:a} that the fractional integral with respect to a function in the Riemann--Liouville sense is commutative, therefore
	\[
	\prescript{RL}{a}I^{\beta n+\alpha}_{\varphi(x)}\circ\prescript{RL}{a}I^{\delta m+\gamma}_{\varphi(x)} u(x)=\prescript{RL}{a}I^{\delta m+\gamma}_{\varphi(x)}\circ \prescript{RL}{a}I^{\beta n+\alpha}_{\varphi(x)} u(x).
	\]
	It follows that
	\begin{align*}
	\prescript{\mathcal{A}}{a}I^{\alpha,\beta}_{\varphi(x)}\circ
	\prescript{\mathcal{A}}{a}I^{\gamma,\delta}_{\varphi(x)} u(x)&=\sum_{m,n}^{} a_m\; \Gamma(\delta m+\gamma)\; a_n\;\Gamma(\beta n+\alpha)\prescript{RL}{a}I^{\delta m+\gamma}_{\varphi(x)}\circ\prescript{RL}{a}I^{\beta n+\alpha}_{\varphi(x)}u(x)\\
	&=\sum_{m=0}^{\infty} a_m \Gamma(\delta m +\gamma)\prescript{RL}{a}I^{\delta m+\gamma}_{\varphi(x)}\left[\sum_{n=0}^{\infty} a_n \Gamma(\beta n +\alpha)\prescript{RL}{a}I^{\beta n+\alpha}_{\varphi(x)} u(x)\right]\\
	&=\prescript{\mathcal{A}}{a}I^{\gamma, \delta}_{\varphi(x)}\circ\prescript{\mathcal{A}}{a}I^{\alpha,\beta}_{\varphi(x)}u(x).
	\end{align*}
	Thus the required commutativity result holds.	
\end{proof}

\begin{theorem}[Semigroup property in one parameter]
	\label{thm:spop}
	We consider all notation as in Definition~\ref{Def:GF}. Then, the following equality:
	\begin{align}
	\prescript{\mathcal{A}}{a}I^{\alpha,\beta}_{\varphi(x)}\circ\prescript{\mathcal{A}}{a}I^{\gamma,\beta}_{\varphi(x)}u(x)&=\prescript{\mathcal{A}}{a}I^{\alpha+\gamma,\beta}_{\varphi(x)}u(x)
	\end{align}
	is uniformly valid (i.e., for all $\alpha,\beta,\gamma,u$) if and only if the condition below is satisfied for any non-negative integer $k$:
	\begin{align*}
	a_k(\alpha+\gamma,\beta) \Gamma(\beta k+\alpha+\gamma)=\sum_{m+n=k}^{} a_n(\alpha,\beta) a_m(\gamma,\beta)\Gamma(\beta n+\alpha)\Gamma(\beta m+\gamma).
	\end{align*}
\end{theorem}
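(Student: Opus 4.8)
The plan is to reduce the operator identity to an identity of power-series coefficients, using the series formula of Theorem~\ref{lem:series} together with the semigroup property of the Riemann--Liouville integral with respect to $\varphi$ (Proposition~\ref{pro:a}).

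First I would expand each factor on the left using Theorem~\ref{lem:series}, so that the composition becomes a double series
\begin{align*}
\prescript{\mathcal{A}}{a}I^{\alpha,\beta}_{\varphi(x)}\circ\prescript{\mathcal{A}}{a}I^{\gamma,\beta}_{\varphi(x)}u(x)=\sum_{n,m} a_n(\alpha,\beta)\,a_m(\gamma,\beta)\,\Gamma(\beta n+\alpha)\,\Gamma(\beta m+\gamma)\,\prescript{RL}{a}I^{\beta n+\alpha}_{\varphi(x)}\circ\prescript{RL}{a}I^{\beta m+\gamma}_{\varphi(x)}u(x),
\end{align*}
the rearrangement being justified by the local uniform convergence furnished by Theorem~\ref{lem:series}. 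Proposition~\ref{pro:a} then collapses each inner composition to $\prescript{RL}{a}I^{\beta(n+m)+\alpha+\gamma}_{\varphi(x)}$, and re-indexing by $k=m+n$ turns this into a single series whose coefficient of $\prescript{RL}{a}I^{\beta k+\alpha+\gamma}_{\varphi(x)}u(x)$ is exactly $\sum_{m+n=k} a_n(\alpha,\beta)a_m(\gamma,\beta)\Gamma(\beta n+\alpha)\Gamma(\beta m+\gamma)$. Applying Theorem~\ref{lem:series} once to the right-hand side expresses it as the series in the same operators with coefficient $a_k(\alpha+\gamma,\beta)\Gamma(\beta k+\alpha+\gamma)$.

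Both sides are now series in the single family $\{\prescript{RL}{a}I^{\beta k+\alpha+\gamma}_{\varphi(x)}\}_{k\geq0}$, so the identity reduces to matching coefficients. The \emph{if} direction is then immediate: when the stated condition holds for every $k$, the two series agree term by term. The main obstacle is the \emph{only if} direction, which demands that equality of the operators for all $u$ force equality of the coefficients term by term; this is precisely a statement of linear independence of the operators $\prescript{RL}{a}I^{s}_{\varphi(x)}$ for distinct orders $s$. To dispatch it cleanly I would use the conjugation relation of Proposition~\ref{pro:x1}, namely $\prescript{RL}{a}I^{s}_{\varphi(x)}=Q_{\varphi}\circ\prescript{RL}{\varphi(a)}I^{s}_{x}\circ Q_{\varphi}^{-1}$: since $Q_{\varphi}$ is invertible, independence of the $\varphi$-integrals is equivalent to that of the ordinary Riemann--Liouville integrals, which is exactly the fact underpinning the analogous Proposition~\ref{thm:spo}. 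Indeed, the entire theorem can be obtained in one stroke from Proposition~\ref{thm:spo}, since conjugating both the composed operator and the single operator by $Q_{\varphi}$ shows that the $\varphi$-identity with base point $a$ holds if and only if the corresponding $\varphi(x)=x$ identity with base point $\varphi(a)$ holds, and the governing coefficient condition is independent of the base point. I expect the reindexing bookkeeping and the precise formulation of this independence argument to be the only delicate points, everything else being a direct transcription of the two cited results.
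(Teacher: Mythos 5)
Your core argument is exactly the paper's: expand both factors with the series formula of Theorem~\ref{lem:series}, collapse each inner composition $\prescript{RL}{a}I^{\beta n+\alpha}_{\varphi(x)}\circ\prescript{RL}{a}I^{\beta m+\gamma}_{\varphi(x)}$ via Proposition~\ref{pro:a}, reindex by $k=m+n$, and compare coefficients with the expansion of $\prescript{\mathcal{A}}{a}I^{\alpha+\gamma,\beta}_{\varphi(x)}u(x)$. Where you go beyond the paper is precisely at the two points you flag as delicate. The paper's proof simply asserts that the two resulting series are uniformly equal if and only if the coefficients agree for every $k$; it never addresses the linear-independence question underlying the \emph{only if} direction, so your justification (conjugate each $\prescript{RL}{a}I^{s}_{\varphi(x)}$ by $Q_{\varphi}$ via Proposition~\ref{pro:x1} to reduce independence to the classical Riemann--Liouville case, which is the fact underpinning Proposition~\ref{thm:spo}) is a genuine supplement rather than a transcription. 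Your closing observation, that the entire theorem follows in one stroke by conjugation from Proposition~\ref{thm:spo}, is a genuinely different and shorter route the paper does not take: since $Q_{\varphi}$ is linear and invertible, conjugating termwise in the series gives $\prescript{\mathcal{A}}{a}I^{\alpha,\beta}_{\varphi(x)}=Q_{\varphi}\circ\prescript{\mathcal{A}}{\varphi(a)}I^{\alpha,\beta}_{x}\circ Q_{\varphi}^{-1}$, after which the $\varphi$-identity holds uniformly if and only if the classical one does, with the same $\varphi$-independent coefficient condition. Interestingly, the paper itself endorses this viewpoint in the remark following Theorem~\ref{thm:sp2p} (composition and semigroup properties do not depend on $\varphi$), yet its proof redoes the series computation from scratch; your shortcut buys brevity and outsources the independence issue to the already-established Proposition~\ref{thm:spo}, at the small cost of first recording the conjugation relation for the analytic-kernel operators, which the paper states only for the Riemann--Liouville ones.
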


\begin{proof}
Using the series formula from Theorem~\ref{lem:series}, and the semigroup property for Riemann--Liouville fractional integrals with respect to a function from Proposition~\ref{pro:a}, we have:
	\begin{align}
	\prescript{\mathcal{A}}{a}I^{\alpha,\beta}_{\varphi(x)}\circ
	\prescript{\mathcal{A}}{a}I^{\gamma,\beta}_{\varphi(x)}u(x)&=\sum_{n=0}^{\infty} a_n \Gamma(\beta n +\alpha)\prescript{RL}{a}I^{\beta n+\alpha}_{\varphi(x)}\left[\sum_{m=0}^{\infty} a_m \Gamma(\beta m +\gamma)\prescript{RL}{a}I^{\beta m+\gamma}_{\varphi(x)} u(x)\right] \nonumber \\
	&=\sum_{n,m}^{} a_n\;\Gamma(\beta n+\alpha)\; a_m\; \Gamma(\beta m+\gamma)\prescript{RL}{a}I^{\beta n+\alpha}_{\varphi(x)}\circ\prescript{RL}{a}I^{\beta m+\gamma}_{\varphi(x)} u(x)\nonumber \\
	&=\sum_{n,m}^{} a_n\;\Gamma(\beta n+\alpha)\; a_m\; \Gamma(\beta m+\gamma)\prescript{RL}{a}I^{\alpha+\gamma+\beta n+\beta m}_{\varphi(x)} u(x)\nonumber\\
	&=\sum_{k=0}^{\infty} \left[\sum_{m+n=k}^{} a_n a_m \Gamma(\beta n+\alpha)\Gamma(\beta m+\gamma)\right]\prescript{RL}{a}I^{\alpha+\gamma k+\beta}_{\varphi(x)} u(x),\label{54}
	\end{align}
	where in the last line we set $k=m+n$.
	
	On the other hand, directly from Theorem~\ref{lem:series} and replacing $\alpha$ by $\alpha+\gamma$, we have 
	\begin{align}
	\prescript{\mathcal{A}}{a}I^{\alpha+\gamma,\beta}_{\varphi(x)} u(x)&=\sum_{k=0}^{\infty} a_k \Gamma(\beta k+\alpha+\gamma)
	\prescript{RL}{a}I^{\alpha+\gamma+\beta k}_{\varphi(x)}u(x).\label{55}	
	\end{align}
	Therefore, it can be seen that the right hand sides of \eqref{54} and \eqref{55} are uniformly equal if and only if the following holds:
	\begin{align*}
	a_k(\alpha+\gamma,\beta) \Gamma(\beta k+\alpha+\gamma)=\sum_{m+n=k}^{} a_n(\alpha,\beta) a_m(\gamma,\beta)\Gamma(\beta n+\alpha)\Gamma(\beta m+\gamma).
	\end{align*}
	for all $k=0,1,2,...$. This completes the proof.
\end{proof}

\begin{theorem}[Semigroup property in two parameters]
	\label{thm:sp2p}
	We consider all notation as in Definition~\ref{Def:GF}. Then, the following equality:
	\begin{align}
	\prescript{\mathcal{A}}{a}I^{\alpha,\beta}_{\varphi(x)}\circ\prescript{\mathcal{A}}{a}I^{\gamma, \delta}_{\varphi(x)} u(x)&=\prescript{\mathcal{A}}{a}I^{\alpha+\gamma,\beta+\delta}_{\varphi(x)}u(x)
	\end{align}
cannot be uniformly valid (i.e., for all $\alpha,\beta,\gamma,\delta,u$).
\end{theorem}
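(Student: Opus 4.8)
The plan is to reduce this impossibility statement to the one already known for the standard analytic-kernel operators, Proposition~\ref{thm:spo}, by exhibiting $\prescript{\mathcal{A}}{a}I^{\alpha,\beta}_{\varphi(x)}$ as a conjugate of $\prescript{\mathcal{A}}{\varphi(a)}I^{\alpha,\beta}_{x}$ under the composition operator $Q_\varphi$. Since the two-parameter case has already been ruled out in the setting of Section~\ref{chap:4}, all the real work is in transporting that result across the conjugation.

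The first step is to prove the conjugation relation
\begin{align*}
\prescript{\mathcal{A}}{a}I^{\alpha,\beta}_{\varphi(x)} = Q_\varphi\circ\prescript{\mathcal{A}}{\varphi(a)}I^{\alpha,\beta}_{x}\circ Q_\varphi^{-1}.
\end{align*}
To see this, I would start from the series formula of Theorem~\ref{lem:series}, apply the Riemann--Liouville conjugation relation from Proposition~\ref{pro:x1} to each term in the form $\prescript{RL}{a}I^{\beta n+\alpha}_{\varphi(x)}=Q_\varphi\circ\prescript{RL}{\varphi(a)}I^{\beta n+\alpha}_{x}\circ Q_\varphi^{-1}$, pull the fixed operators $Q_\varphi$ and $Q_\varphi^{-1}$ outside the locally uniformly convergent sum, and then recognise the remaining series as $\prescript{\mathcal{A}}{\varphi(a)}I^{\alpha,\beta}_{x}$ via the standard series formula of Proposition~\ref{lem:max}.

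With this relation in hand, the second step is purely algebraic. Since $\varphi$ is monotonic, $Q_\varphi$ is a bijection with $Q_\varphi^{-1}=Q_{\varphi^{-1}}$, so composing two conjugated operators cancels the inner factor $Q_\varphi^{-1}\circ Q_\varphi$:
\begin{align*}
\prescript{\mathcal{A}}{a}I^{\alpha,\beta}_{\varphi(x)}\circ\prescript{\mathcal{A}}{a}I^{\gamma,\delta}_{\varphi(x)}
= Q_\varphi\circ\Bigl(\prescript{\mathcal{A}}{\varphi(a)}I^{\alpha,\beta}_{x}\circ\prescript{\mathcal{A}}{\varphi(a)}I^{\gamma,\delta}_{x}\Bigr)\circ Q_\varphi^{-1},
\end{align*}
while $\prescript{\mathcal{A}}{a}I^{\alpha+\gamma,\beta+\delta}_{\varphi(x)}=Q_\varphi\circ\prescript{\mathcal{A}}{\varphi(a)}I^{\alpha+\gamma,\beta+\delta}_{x}\circ Q_\varphi^{-1}$. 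Because conjugation by the bijection $Q_\varphi$ is injective on operators, the two displayed operators coincide for a given choice of $\alpha,\beta,\gamma,\delta,u$ if and only if $\prescript{\mathcal{A}}{\varphi(a)}I^{\alpha,\beta}_{x}\circ\prescript{\mathcal{A}}{\varphi(a)}I^{\gamma,\delta}_{x}u=\prescript{\mathcal{A}}{\varphi(a)}I^{\alpha+\gamma,\beta+\delta}_{x}u$ does. Hence the two-parameter semigroup property for the operators with respect to $\varphi$ is uniformly valid if and only if the corresponding property for the standard analytic-kernel operators (with constant of integration $\varphi(a)$) is, and by Proposition~\ref{thm:spo} the latter cannot be uniformly valid, so the claim follows.

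I expect the only delicate point to be justifying the termwise conjugation in the first step, which is legitimate because the series converges locally uniformly (Theorem~\ref{lem:series}) and the composition operators act termwise. As an alternative that avoids conjugation entirely, one could argue directly from the series formula: expanding both sides via Theorem~\ref{lem:series} and the semigroup property of Proposition~\ref{pro:a}, the left-hand side yields Riemann--Liouville integrals of every order $\beta n+\delta m+\alpha+\gamma$, whereas the right-hand side yields only the orders $(\beta+\delta)k+\alpha+\gamma$; choosing $\beta,\delta$ with $\beta/\delta$ irrational makes all these orders distinct, so equality would force the off-diagonal coefficients $a_n a_m$ with $n\neq m$ to vanish, which fails for a generic kernel $\mathcal{A}$.
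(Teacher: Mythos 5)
Your proposal is correct, but it takes a genuinely different route from the paper. The paper proves the theorem directly in the $\varphi$-setting: it expands $\prescript{\mathcal{A}}{a}I^{\alpha,\beta}_{\varphi(x)}\circ\prescript{\mathcal{A}}{a}I^{\gamma,\delta}_{\varphi(x)}u$ via Theorem~\ref{lem:series} and Proposition~\ref{pro:a}, splits the resulting double sum into a diagonal part (orders $\alpha+\gamma+k(\beta+\delta)$) and an off-diagonal part (orders $\alpha+\gamma+\beta n+\delta m$ with $n\neq m$), compares with the expansion of $\prescript{\mathcal{A}}{a}I^{\alpha+\gamma,\beta+\delta}_{\varphi(x)}u$, and concludes that uniform validity would force every off-diagonal product $a_n(\alpha,\beta)\,a_m(\gamma,\delta)\,\Gamma(\beta n+\alpha)\Gamma(\delta m+\gamma)$ to vanish, which is ruled out by appeal to \cite[Theorem 2.12]{FOB}. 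You instead transport the known impossibility for the ordinary analytic-kernel operators (Proposition~\ref{thm:spo}) across the conjugation identity $\prescript{\mathcal{A}}{a}I^{\alpha,\beta}_{\varphi(x)}=Q_\varphi\circ\prescript{\mathcal{A}}{\varphi(a)}I^{\alpha,\beta}_{x}\circ Q_\varphi^{-1}$; that identity is valid (and can be obtained even more directly than by your series route, via the substitution $s=\varphi(t)$ in Definition~\ref{Def:GF}), and since $Q_\varphi$ bijects the relevant function spaces, quantification over all $u$ transfers both ways, so the reduction is sound. In effect you have promoted the paper's own Remark --- that, by Proposition~\ref{pro:x1}, composition and semigroup properties depend only on $\mathcal{A}$ and not on $\varphi$ --- into the proof itself: this is shorter, avoids duplicating the computation of \cite{FOB}, and establishes the general principle once and for all, whereas the paper's route is self-contained in the $\varphi$-setting and exhibits the explicit coefficient conditions that uniform validity would require. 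Your fallback direct argument is essentially the paper's proof, and your refinement of choosing $\beta/\delta$ irrational so that all orders $\alpha+\gamma+\beta n+\delta m$ are distinct actually tightens a step the paper leaves implicit (term-by-term matching of the two series presupposes distinguishable exponents, and uniform validity in $\beta,\delta$ licenses exactly such a choice). One caveat, which your phrase ``fails for a generic kernel'' brushes against: vanishing of all off-diagonal products only forces $\mathcal{A}$ to be a monomial $a_kx^k$, not identically zero, and for such degenerate kernels (e.g.\ the Riemann--Liouville case of constant $\mathcal{A}$) the two-parameter semigroup property does hold uniformly; this tacit exclusion of degenerate kernels is inherited from \cite{FOB} by the paper's proof and by both of your arguments alike, so it does not count against your proposal relative to the paper.
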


\begin{proof}
Using the series formula from Theorem~\ref{lem:series}, and the semigroup property for Riemann--Liouville fractional integrals with respect to a function from Proposition~\ref{pro:a}, we have:
	\begin{align}
	\prescript{\mathcal{A}}{a}I^{\alpha,\beta}_{\varphi(x)}\circ\prescript{\mathcal{A}}{a}I^{\gamma,\delta}_{\varphi(x)} u(x)
	&=\sum_{n=0}^{\infty} a_n \Gamma(\beta n +\alpha)\prescript{RL}{a}I^{\beta n+\alpha}_{\varphi(x)} \left[\sum_{m=0}^{\infty} a_m \Gamma(\delta m +\gamma)\prescript{RL}{a}I^{\delta m+\gamma}_{\varphi(x)}u(x)\right] \nonumber \\
	&=\sum_{n,m}^{} a_n\;\Gamma(\beta n+\alpha)\; a_m\; \Gamma(\delta m+\gamma)\prescript{RL}{a}I^{\beta n+\alpha}_{\varphi(x)}\circ\prescript{RL}{a}I^{\delta m+\gamma}_{\varphi(x)}u(x)\nonumber \\
	&=\sum_{n,m}^{} a_n\;\Gamma(\beta n+\alpha)\; a_m\; \Gamma(\delta m+\gamma)\prescript{RL}{a}I^{\alpha+\gamma+\beta n+\delta m}_{\varphi(x)}u(x)\nonumber\\
	&=\sum_{k=0}^{\infty} a_k a_k\Gamma(\beta k+\gamma)\Gamma(\delta k+\gamma)\prescript{RL}{a}I^{\alpha+\gamma+k(\beta+\delta)}_{\varphi(x)}u(x) \nonumber\\
	&\hspace{2cm}+\sum_{m\neq n}^{} a_n \Gamma(\beta n+\alpha) a_m \Gamma(\delta m+\gamma)\prescript{RL}{a}I^{\alpha+\gamma+\beta n+\delta m}_{\varphi(x)}u(x),\label{56}
	\end{align}
	where we have split the sum into one part with $(\beta+\delta)$ times an integer in the fractional order and another part with $\beta$ and $\delta$ times different integers.
	
	On the other hand, directly from Theorem~\ref{lem:series} and replacing $\alpha$ by $\alpha+\gamma$ and $\beta$ by $\beta+\delta$, we have
	\begin{align}
	\prescript{\mathcal{A}}{a}I^{\alpha+\gamma,\beta+\delta}_{\varphi(x)} u(x)&=\sum_{k=0}^{\infty} a_k \Gamma((\beta+\delta) k+\alpha+\gamma)\prescript{RL}{a}I^{\alpha+\gamma+k(\beta+\delta)}_{\varphi(x)}.\label{57}	
	\end{align}
	It follows that we have uniform equality between the right hand sides of \eqref{56} and \eqref{57} if and only if the following two identities hold:
	\begin{align*}
	a_k(\alpha,\beta) a_k(\gamma, \delta) \Gamma(\beta k+\alpha)\Gamma(\delta k+\gamma)=a_k(\alpha+\beta,\gamma+\delta)\Gamma((\beta+\delta)k+\alpha+\gamma),\;\;\; k\in \mathbb{Z}_0^+.
	\end{align*}
	and
	\begin{align*}
	a_n(\alpha,\beta)\Gamma(\beta n+\alpha)a_m(\gamma,\delta)\Gamma(\delta m+\gamma)=0,\;\;\;\;\;\; \text{for\;all}\;\;m,n\in \;\mathbb{Z}_0^+, \text{with}\;\; m\neq n.
	\end{align*}
	But, just like in \cite[Theorem 2.12]{FOB}, this latter condition cannot be uniformly valid since it implies $\mathcal{A}$ would be identically zero.
\end{proof}

\begin{remark}
Note that the results and derivations of Theorems \ref{thm:spop} and \ref{thm:sp2p} do not rely at all on the function $\varphi$: the same semigroup properties hold for any choice of $\varphi$. This is because, as we saw in Section \ref{Subsec:wrt} above, all composition properties for fractional calculus with respect to functions are the same as the corresponding properties for the original fractional calculus. Thanks to the conjugation relations of Proposition \ref{pro:x1}, we know that composition and semigroup properties do not depend on the choice of $\varphi$, only on the choice of $\mathcal{A}$.
\end{remark}

\subsection{Generalised fractional derivatives with respect to functions}

In the paper \cite{FOB} where fractional calculus with general analytic kernels was first theorised, the integral operator was defined first, and the derivative operators came only after lengthy discussion about how exactly they should be defined to make the model most natural and sensible. In the final section of that paper which introduced fractional calculus with analytic kernels with respect to functions, only the fractional integral operator was mentioned in this model. The corresponding fractional derivative operators, therefore, are a new contribution of our current work, albeit a natural one given the fractional derivatives with analytic kernels as defined in \cite{FOB} and the usual way of extending fractional operators to be taken with respect to functions.

\begin{definition}[Generalised fractional derivative with respect to function: Riemann--Liouville sense]\label{Def:GD}
	Let $[a,b]$ be a real interval and $\alpha,\beta\in\mathbb{C}$ with $\Real(\alpha)\geq0$, $\Real(\beta)>0$. Let $\mathcal{A}$ be an analytic function as in Definition~\ref{def:F}, and let $\varphi\in C^n[a,b]$ be a monotonic function as in Definition \ref{Def:Dwrt}. Then, the generalised fractional derivative (in the Riemann--Liouville sense) of a function $u\in L^1[a,b]$ with sufficient differentiability properties with respect to $\varphi$ is given by the following:
	\begin{align}
	\prescript{\mathcal{A}_R}{a}D^{\alpha,\beta}_{\varphi(x)} u(x)&=\left(\frac{1}{\varphi'(x)}\cdot\frac{\mathrm{d}}{\mathrm{d}x}\right)^m\prescript{\mathcal{\overline{A}}}{a}I^{m-\alpha,\beta}_{\varphi(x)} u(x),\;\;\;\text{with}\;\;m-1< \Real(\alpha) < m\in \mathbb{N}, \label{58}
	\end{align}
	where the new analytic function $\overline{\mathcal{A}}$ is defined so that $\mathcal{A}_{\Gamma}\cdot\mathcal{\overline{A}}_{\Gamma}=1$.
\end{definition}

\begin{definition}[Generalised fractional derivative with respect to function: Caputo sense]\label{Def:GDC}
	Let $[a,b]$ be a real interval and $\alpha,\beta\in\mathbb{C}$ with $\Real(\alpha)\geq0$, $\Real(\beta)>0$. Let $\mathcal{A}$ be an analytic function as in Definition~\ref{def:F}, and let $\varphi\in C^n[a,b]$ be a monotonic function as in Definition \ref{Def:Dwrt}. Then, the generalised fractional derivative in the Caputo sense of a function $u\in L^1[a,b]$ with sufficient differentiability properties with respect to $\varphi$ is given by the following:
	\begin{align}
	\prescript{\mathcal{A}_C}{a}D^{\alpha,\beta}_{\varphi(x)} u(x)&=\prescript{\mathcal{\overline{A}}}{a}I^{m-\alpha,\beta}_{\varphi(x)}\left(\frac{1}{\varphi'(x)}\cdot\frac{\mathrm{d}}{\mathrm{d}x}\right)^m u(x),\;\;\;\text{with}\;\;m-1< \Real(\alpha) < m\in \mathbb{N},
	\end{align}
	where the new analytic function $\overline{\mathcal{A}}$ is defined so that $\mathcal{A}_{\Gamma}\cdot\mathcal{\overline{A}}_{\Gamma}=1$.
\end{definition}

\begin{theorem}[Series formula]
	\label{lem:Sf}
With all notation as in Definition~\ref{Def:GD}, the general fractional derivative $\prescript{\mathcal{A}_R}{a}D^{\alpha,\beta}_{\varphi(x)} u(x)$, with analytic kernel $\mathcal{A}$ and with respect to a Lipschitz continuous function $\varphi$, is given by a locally uniformly convergent series as follows:
	\begin{align}
	\prescript{\mathcal{A}}{a}D^{\alpha,\beta}_{\varphi(x)}u(x)&=\sum_{n=0}^{\infty} \overline{a}_n \Gamma(\beta n-\alpha+m)\prescript{RL}{a}D^{\alpha-\beta n}_{\varphi(x)}u(x), \;\; m,n\in \mathbb{N},\label{59}
	\end{align}
	where as usual $\overline{\mathcal{A}}(x)=\sum_{n=0}^{\infty}\overline{a}_nx^n$ is defined so that $\mathcal{A}_{\Gamma}\cdot\mathcal{\overline{A}}_{\Gamma}=1$, and $\overline{\mathcal{A}}$ is assumed to be analytic on the disc $\mathcal{D}(0,R)$ mentioned in Definition \ref{def:F}, and where $m=\lfloor\Real(\alpha)\rfloor+1$ as in Definition \ref{Def:GD}.
\end{theorem}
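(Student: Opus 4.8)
The plan is to argue directly from the definition \eqref{58}, reducing everything to the already-established series formula for the integral operator (Theorem~\ref{lem:series}) together with the composition relations for Riemann--Liouville differintegrals with respect to $\varphi$. First I would apply Theorem~\ref{lem:series} to the inner operator $\prescript{\mathcal{\overline{A}}}{a}I^{m-\alpha,\beta}_{\varphi(x)}u(x)$, which is itself a generalised fractional integral with respect to $\varphi$, but with kernel $\overline{\mathcal{A}}$ (hence coefficients $\overline{a}_n$) and with order parameter $m-\alpha$ playing the role of $\alpha$. This yields the locally uniformly convergent expansion
\begin{align*}
\prescript{\mathcal{\overline{A}}}{a}I^{m-\alpha,\beta}_{\varphi(x)}u(x)=\sum_{n=0}^{\infty}\overline{a}_n\,\Gamma(\beta n+m-\alpha)\,\prescript{RL}{a}I^{\beta n+m-\alpha}_{\varphi(x)}u(x).
\end{align*}

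Next I would apply the iterated operator $\left(\frac{1}{\varphi'(x)}\cdot\frac{\mathrm{d}}{\mathrm{d}x}\right)^m$ to both sides and interchange it with the infinite summation. To simplify the individual terms, I would rewrite each Riemann--Liouville integral with respect to $\varphi$ as a differintegral of negative order, using the convention $\prescript{RL}{a}I^{\sigma}_{\varphi(x)}=\prescript{RL}{a}D^{-\sigma}_{\varphi(x)}$ with $\sigma=\beta n+m-\alpha$, and then invoke the semigroup property of Proposition~\ref{pro:b} (with standard derivative count $m$):
\begin{align*}
\left(\frac{1}{\varphi'(x)}\cdot\frac{\mathrm{d}}{\mathrm{d}x}\right)^m\prescript{RL}{a}I^{\beta n+m-\alpha}_{\varphi(x)}u(x)=\left(\frac{1}{\varphi'(x)}\cdot\frac{\mathrm{d}}{\mathrm{d}x}\right)^m\prescript{RL}{a}D^{\alpha-\beta n-m}_{\varphi(x)}u(x)=\prescript{RL}{a}D^{\alpha-\beta n}_{\varphi(x)}u(x).
\end{align*}
Substituting this back into the differentiated series, and noting $\Gamma(\beta n+m-\alpha)=\Gamma(\beta n-\alpha+m)$, would produce exactly the claimed series \eqref{59}.

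The main obstacle I anticipate is the justification for interchanging the differential operator $\left(\frac{1}{\varphi'(x)}\cdot\frac{\mathrm{d}}{\mathrm{d}x}\right)^m$ with the infinite sum, which is the only step that is not purely algebraic. Termwise differentiation is legitimate provided the differentiated series converges locally uniformly, so I would establish this from the analyticity of $\overline{\mathcal{A}}$ on $\mathcal{D}(0,R)$ and the bound $0\le|\varphi(x)-\varphi(t)|^{\beta}\le(b-a)^{\Real(\beta)}<R$ already exploited in the proof of Theorem~\ref{lem:series}, combined with the smoothness and strict monotonicity of $\varphi$, which keep the factors $1/\varphi'(x)$ bounded on $[a,b]$. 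Once local uniform convergence of the differentiated series is secured, the termwise manipulation is valid and the remaining identities follow immediately from Proposition~\ref{pro:b} and the differintegral convention, completing the proof.
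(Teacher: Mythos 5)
Your overall route is exactly the paper's: expand the inner operator $\prescript{\mathcal{\overline{A}}}{a}I^{m-\alpha,\beta}_{\varphi(x)}u$ via Theorem~\ref{lem:series}, interchange $\left(\frac{1}{\varphi'(x)}\cdot\frac{\mathrm{d}}{\mathrm{d}x}\right)^m$ with the sum, and finish each term with Proposition~\ref{pro:b}; the algebraic part of your argument is correct and matches the paper line for line. The gap is precisely in the one step you flag as the obstacle. You claim local uniform convergence of the differentiated series will follow from the analyticity of $\overline{\mathcal{A}}$, the bound $|\varphi(x)-\varphi(t)|^{\beta}\le(b-a)^{\Real(\beta)}<R$, and the boundedness of $1/\varphi'$, but none of these addresses the actual difficulty. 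After differentiation, the $n$th term is $\overline{a}_n\,\Gamma(\beta n+m-\alpha)\,\prescript{RL}{a}D^{\alpha-\beta n}_{\varphi(x)}u(x)$: the gamma factor in the coefficient is $\Gamma(\beta n+m-\alpha)$, while the differintegral has order $\beta n-\alpha$. This mismatch means the differentiated series is \emph{not} an instance of Theorem~\ref{lem:series} (there the gamma argument matches the order of the integral), so you cannot simply re-cite the convergence already established; relative to such a series there is an extra factor $\Gamma(\beta n+m-\alpha)/\Gamma(\beta n-\alpha)$, which grows polynomially like $n^{m}$, and something must be said about why this growth does not destroy convergence. (The boundedness of $1/\varphi'$ is beside the point; it plays no role in the convergence of the series of differintegrals.)

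The paper closes this gap as follows. Choose $N$ with $\Real(\beta N-\alpha)>0$ and split off the finitely many terms with $n<N$ (these are genuine fractional derivatives, but finitely many terms cannot affect convergence); then re-index the tail as
\begin{equation*}
\sum_{n=0}^{\infty}\overline{a}_{n+N}\,\frac{\Gamma(\beta(n+N)+m-\alpha)}{\Gamma\big(\beta n+(\beta N-\alpha)\big)}\,\Gamma\big(\beta n+(\beta N-\alpha)\big)\,\prescript{RL}{a}I^{\beta n+(\beta N-\alpha)}_{\varphi(x)}u(x),
\end{equation*}
which \emph{is} an instance of the series in Theorem~\ref{lem:series}, namely for the kernel whose coefficients are $\overline{a}_{n+N}$ multiplied by a quotient of two gamma functions; by \cite[Theorem IV.1]{MillerRoss}, such a gamma quotient does not affect convergence, so this series of Riemann--Liouville integrals converges locally uniformly, and termwise differentiation of the original series is justified. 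Your proposal becomes a complete proof once you replace the appeal to ``analyticity plus the bound'' with this splitting-and-reindexing argument (or an equivalent root-test estimate showing that multiplying $\overline{a}_n$ by a polynomially growing gamma ratio leaves the radius of convergence, and hence the local uniform convergence, unchanged).
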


\begin{proof}
Using the result of Theorem \ref{lem:series}, we have
	\begin{align*}
	\prescript{\mathcal{A}}{a}D^{\alpha,\beta}_{\varphi(x)}	u(x)&=\left(\frac{1}{\varphi'(x)}\cdot\frac{\mathrm{d}}{\mathrm{d}x}\right)^m\prescript{\mathcal{\overline{A}}}{a}I^{m-\alpha, \beta}_{\varphi(x)}u(x) \\
	&=\left(\frac{1}{\varphi'(x)}\cdot\frac{\mathrm{d}}{\mathrm{d}x}\right)^m\sum_{n=0}^{\infty} \overline{a}_n \Gamma(\beta n-\alpha+m)\prescript{RL}{a}I^{\beta n-\alpha+m}_{\varphi(x)}u(x) \\
	&=\sum_{n=0}^{\infty} \overline{a}_n \Gamma(\beta n-\alpha+m) \left(\frac{1}{\varphi'(x)}\cdot\frac{\mathrm{d}}{\mathrm{d}x}\right)^m\prescript{RL}{a}I^{\beta n-\alpha+m}_{\varphi(x)}u(x) \\
	&=\sum_{n=0}^{\infty} \overline{a}_n \Gamma(\beta n-\alpha+m)\prescript{RL}{a}D^{\alpha-\beta n}_{\varphi(x)}u(x),
	\end{align*}
	where in the last step we used the composition property of Riemann--Liouville differintegrals with respect to functions given by Proposition \ref{pro:b}. In order to justify exchanging the order of differintegration and summation in the manipulation above, according to standard theorems on termwise differintegration of series, we needed local uniform convergence of both the pre-differentiation series and the post-differentiation series. The pre-differentiation series of Riemann--Liouville integrals is locally uniformly convergent according to Theorem \ref{lem:series}, and the post-differentiation series also consists only of Riemann--Liouville integrals for sufficiently large $n$ in the sum:
	\begin{multline*}
	\sum_{n=N}^{\infty} \overline{a}_n \Gamma(\beta n-\alpha+m)\prescript{RL}{a}D^{\alpha-\beta n}_{\varphi(x)}u(x) \\ =\sum_{n=0}^{\infty} \overline{a}_{n+N} \frac{\Gamma(\beta n-\alpha+m)}{\Gamma(\beta n+(\beta N-\alpha))}\Gamma(\beta n+(\beta N-\alpha))\prescript{RL}{a}I^{\beta n+(\beta N-\alpha)}_{\varphi(x)}u(x),
	\end{multline*}
where $N$ is large enough that $\Real(\beta N-\alpha)>0$ so that this series is a case of the one in Theorem \ref{lem:series}, a locally uniformly convergent series of Riemann--Liouville integrals, with an extra multiplier consisting of a quotient of two gamma functions, which does not affect convergence by \cite[Theorem IV.1]{MillerRoss}. Therefore, this series is also locally uniformly convergent, and the proof is complete.
\end{proof}

\begin{theorem} \label{pro:1}
We consider all notation as in Definition~\ref{Def:GD}. Then, we have the following identities valid for all $m\in\mathbb{N}$ and $\alpha\in\mathbb{C}$ with $\Real(\alpha)>0$:
	\begin{align*}
	\prescript{RL}{a}I_{\varphi(x)}^m\left(\prescript{\mathcal{A}}{a}I^{\alpha,\beta}_{\varphi(x)}u(x)\right)&=\prescript{\mathcal{B}}{a}I^{m+\alpha,\beta}_{\varphi(x)}u(x), \\
	\left(\frac{1}{\varphi'(x)}\cdot\frac{\mathrm{d}}{\mathrm{d}x}\right)^m\left(\prescript{\mathcal{A}}{a}I^{m+\alpha,\beta}_{\varphi(x)}u(x)\right)&=\prescript{\mathcal{C}}{a}I^{\alpha,\beta}_{\varphi(x)}u(x),
	\end{align*}
	where the modified analytic functions $\mathcal{B}$ and $\mathcal{C}$ are given by
	\begin{align*}
	\mathcal{B}(x)&=\sum_{n=0}^{\infty}\frac{a_n}{(\beta n+\alpha)_m} x^n=\sum_{n=0}^{\infty}\frac{\Gamma(\beta n+\alpha)}{\Gamma(\beta n+\alpha+m)} a_n x^n, \\
	\mathcal{C}(x)&=\sum_{n=0}^{\infty}(\beta n+\alpha)_m a_n x^n=\sum_{n=0}^{\infty}\frac{\Gamma(\beta n+\alpha+m)}{\Gamma(\beta n+\alpha)} a_n x^n,
	\end{align*}
both of these series being locally uniformly convergent, just like the one for $\mathcal{A}$, according to \cite[Theorem IV.1]{MillerRoss}.
\end{theorem}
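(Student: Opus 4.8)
The plan is to derive both identities directly from the series formula of Theorem~\ref{lem:series} together with the composition properties of Riemann--Liouville operators with respect to $\varphi$ recorded in Propositions~\ref{pro:a} and~\ref{pro:b}, reducing each claim to a matching of power-series coefficients. In each case the strategy is the same: expand the generalised integral into its locally uniformly convergent series of Riemann--Liouville differintegrals, move the outer operator ($\prescript{RL}{a}I^m_{\varphi(x)}$ or $\left(\frac{1}{\varphi'(x)}\cdot\frac{\dd}{\dx}\right)^m$) inside the sum termwise, simplify each term using a semigroup or composition law, and then read off the required coefficients by comparing against the series formula applied to the claimed output operator.

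For the first identity, I would write $\prescript{\mathcal{A}}{a}I^{\alpha,\beta}_{\varphi(x)}u = \sum_{n=0}^\infty a_n\Gamma(\beta n+\alpha)\,\prescript{RL}{a}I^{\beta n+\alpha}_{\varphi(x)}u$ by Theorem~\ref{lem:series}, apply $\prescript{RL}{a}I^m_{\varphi(x)}$ term by term, and use the semigroup property of Proposition~\ref{pro:a} to collapse $\prescript{RL}{a}I^m_{\varphi(x)}\circ\prescript{RL}{a}I^{\beta n+\alpha}_{\varphi(x)} = \prescript{RL}{a}I^{\beta n+\alpha+m}_{\varphi(x)}$. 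This yields $\sum_{n} a_n\Gamma(\beta n+\alpha)\,\prescript{RL}{a}I^{\beta n+\alpha+m}_{\varphi(x)}u$. Comparing with the series formula for $\prescript{\mathcal{B}}{a}I^{m+\alpha,\beta}_{\varphi(x)}u = \sum_n b_n\Gamma(\beta n+\alpha+m)\,\prescript{RL}{a}I^{\beta n+\alpha+m}_{\varphi(x)}u$, where $b_n$ are the coefficients of $\mathcal{B}$, the two agree precisely when $b_n\Gamma(\beta n+\alpha+m)=a_n\Gamma(\beta n+\alpha)$, that is, when $b_n=\frac{\Gamma(\beta n+\alpha)}{\Gamma(\beta n+\alpha+m)}a_n$, which is exactly the stated definition of $\mathcal{B}$.

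For the second identity, I would expand $\prescript{\mathcal{A}}{a}I^{m+\alpha,\beta}_{\varphi(x)}u = \sum_n a_n\Gamma(\beta n+\alpha+m)\,\prescript{RL}{a}I^{\beta n+\alpha+m}_{\varphi(x)}u$ and apply $\left(\frac{1}{\varphi'(x)}\cdot\frac{\dd}{\dx}\right)^m$ termwise. Writing $\prescript{RL}{a}I^{\beta n+\alpha+m}_{\varphi(x)}=\prescript{RL}{a}D^{-(\beta n+\alpha+m)}_{\varphi(x)}$ and invoking Proposition~\ref{pro:b} gives, for each $n$, the simplification $\left(\frac{1}{\varphi'(x)}\cdot\frac{\dd}{\dx}\right)^m\prescript{RL}{a}D^{-(\beta n+\alpha+m)}_{\varphi(x)}=\prescript{RL}{a}D^{-(\beta n+\alpha)}_{\varphi(x)}=\prescript{RL}{a}I^{\beta n+\alpha}_{\varphi(x)}$, exactly as in the reduction already carried out in the proof of Theorem~\ref{lem:Sf}. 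The result is therefore $\sum_n a_n\Gamma(\beta n+\alpha+m)\,\prescript{RL}{a}I^{\beta n+\alpha}_{\varphi(x)}u$, and matching this against the series formula for $\prescript{\mathcal{C}}{a}I^{\alpha,\beta}_{\varphi(x)}u$ forces $c_n\Gamma(\beta n+\alpha)=a_n\Gamma(\beta n+\alpha+m)$, giving $c_n=\frac{\Gamma(\beta n+\alpha+m)}{\Gamma(\beta n+\alpha)}a_n$ as claimed.

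The main obstacle is justifying the termwise application of the outer operators, and in particular the termwise differentiation in the second identity, which is the same delicate point confronted in the proof of Theorem~\ref{lem:Sf}. The first identity is comparatively safe, since $\prescript{RL}{a}I^m_{\varphi(x)}$ is an iterated integration operator that preserves local uniform convergence. For the differentiation I would appeal to the same standard theorem on termwise differintegration of series, checking that both the pre-differentiation series (locally uniformly convergent by Theorem~\ref{lem:series}) and the post-differentiation series remain locally uniformly convergent; the latter is once more a series of Riemann--Liouville integrals carrying only a quotient-of-gamma-functions multiplier, whose effect on convergence is controlled by \cite[Theorem IV.1]{MillerRoss}. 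The same gamma-quotient estimate simultaneously confirms that $\mathcal{B}$ and $\mathcal{C}$ are genuinely analytic on $\mathcal{D}(0,R)$, so that both sides of each identity are well-defined operators within the class of Definition~\ref{Def:GF}.
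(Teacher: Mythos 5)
Your proposal is correct and follows essentially the same route as the paper's proof: expand via the series formula of Theorem~\ref{lem:series}, apply the outer operator termwise, collapse each term using the semigroup and composition properties of Propositions~\ref{pro:a} and~\ref{pro:b}, and regroup the gamma factors to recognise the series for $\prescript{\mathcal{B}}{a}I^{m+\alpha,\beta}_{\varphi(x)}$ and $\prescript{\mathcal{C}}{a}I^{\alpha,\beta}_{\varphi(x)}$. Your extra care in justifying the termwise differentiation (mirroring the argument in Theorem~\ref{lem:Sf} and the gamma-quotient bound from \cite[Theorem IV.1]{MillerRoss}) is a welcome elaboration of what the paper dispatches with the single remark that ``the relevant series are convergent.''
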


\begin{proof}
Using the series formula of Theorem \ref{lem:series}, and composition properties of fractional operators with respect to functions (Propositions \ref{pro:a} and \ref{pro:b}), we have:
	\begin{align*}
	\prescript{RL}{a}I_{\varphi(x)}^m\left(\prescript{\mathcal{A}}{a}I^{\alpha,\beta}_{\varphi(x)}u(x)\right)&=\prescript{RL}{a}I_{\varphi(x)}^m\left(\sum_{n=0}^{\infty} a_n \Gamma(\beta n+\alpha)\prescript{RL}{a}I^{\beta n+\alpha}_{\varphi(x)}u(x)\right) \\
	&=\sum_{n=0}^{\infty} a_n \Gamma(\beta n+\alpha) \prescript{RL}{a}I_{\varphi(x)}^m \prescript{RL}{a}I^{\beta n+\alpha}_{\varphi(x)}u(x) \\
	&=\sum_{n=0}^{\infty} a_n \Gamma(\beta n+\alpha)\prescript{RL}{a}I^{\beta n+\alpha+m}_{\varphi(x)}u(x) \\
	&=\sum_{n=0}^{\infty} \left[a_n\frac{\Gamma(\beta n+\alpha)}{\Gamma(\beta n+\alpha+m)}\right] \Gamma(\beta n+\alpha+m)\prescript{RL}{a}I^{\beta n+\alpha+m}_{\varphi(x)}u(x) \\
	&=\prescript{\mathcal{B}}{a}I^{m+\alpha,\beta}_{\varphi(x)}u(x),
	\end{align*}
and
	\begin{align*}
	\left(\frac{1}{\varphi'(x)}\cdot\frac{\mathrm{d}}{\mathrm{d}x}\right)^m\left(\prescript{\mathcal{A}}{a}I^{m+\alpha,\beta}_{\varphi(x)}u(x)\right)&=\left(\frac{1}{\varphi'(x)}\cdot\frac{\mathrm{d}}{\mathrm{d}x}\right)^m \left(\sum_{n=0}^{\infty} a_n \Gamma(\beta n+\alpha+m)\prescript{RL}{a}I^{\beta n+\alpha+m}_{\varphi(x)}u(x)\right) \\
	&=\sum_{n=0}^{\infty} a_n \Gamma(\beta n+\alpha) \left(\frac{1}{\varphi'(x)}\cdot\frac{\mathrm{d}}{\mathrm{d}x}\right)^m	\prescript{RL}{a}I^{\beta n+\alpha}_{\varphi(x)}u(x) \\
	&=\sum_{n=0}^{\infty} a_n \Gamma(\beta n+\alpha+m)\prescript{RL}{a}I^{\beta n+\alpha}_{\varphi(x)}u(x) \\
	&=\sum_{n=0}^{\infty} \left[a_n\frac{\Gamma(\beta n+\alpha+m)}{\Gamma(\beta n+\alpha)}\right] \Gamma(\beta n+\alpha)\prescript{RL}{a}I^{\beta n+\alpha}_{\varphi(x)}u(x) \\
	&=\prescript{\mathcal{C}}{a}I^{\alpha,\beta}_{\varphi(x)}u(x),
	\end{align*}
	where in each case we exchange the order of differintegral and sum since the relevant series are convergent.
\end{proof}

\begin{theorem} \label{pro:com}
We consider all notation as in Definition~\ref{Def:GD} and $\alpha,\beta,\gamma\in \mathbb{C}$ with positive real part and $\Real(\gamma)<\Real(\alpha)$. Then, the following composition relation:
	\begin{align*}
	\prescript{\mathcal{A}}{a}D^{\alpha,\beta}_{\varphi(x)}\circ\prescript{\mathcal{A}}{a}I^{\gamma,\beta}_{\varphi(x)}u(x)&=
	\prescript{\mathcal{A}}{a}D^{\alpha-\gamma,\beta}_{\varphi(x)}u(x)
	\end{align*}
is valid if and only if we have the following equality for all $k\in\mathbb{Z}^+_0$:
	\begin{align*}
	\overline{a}_k(\alpha-\gamma,\beta)\Gamma(\beta k-\alpha+\gamma+m)=\sum_{n+p=k}\overline{a}_n(\alpha, \beta) a_p(\gamma, \beta) \Gamma(\beta n-\alpha+m)\Gamma(\beta p+\gamma),
	\end{align*}
where $m=\lfloor\Real(\alpha)\rfloor+1$ as in Definition \ref{def:F}.
\end{theorem}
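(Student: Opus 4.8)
The plan is to follow exactly the template of Theorems~\ref{thm:spop} and~\ref{thm:sp2p}: expand both factors of the composition into their locally uniformly convergent series, collapse every summand into a single Riemann--Liouville differintegral with respect to $\varphi$, reindex by the total degree, and then read off the condition by matching coefficients. The only genuinely new ingredient compared with the earlier semigroup results is that here one series carries the coefficients $\overline{a}_n$ of a derivative while the other carries the coefficients $a_p$ of an integral, so the inner compositions mix derivative and integral orders.

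First I would expand $\prescript{\mathcal{A}}{a}D^{\alpha,\beta}_{\varphi(x)}$ by Theorem~\ref{lem:Sf} and $\prescript{\mathcal{A}}{a}I^{\gamma,\beta}_{\varphi(x)}$ by Theorem~\ref{lem:series}, obtaining the double series
\begin{align*}
\prescript{\mathcal{A}}{a}D^{\alpha,\beta}_{\varphi(x)}\circ\prescript{\mathcal{A}}{a}I^{\gamma,\beta}_{\varphi(x)}u(x)=\sum_{n,p}\overline{a}_n\,\Gamma(\beta n-\alpha+m)\,a_p\,\Gamma(\beta p+\gamma)\;\prescript{RL}{a}D^{\alpha-\beta n}_{\varphi(x)}\circ\prescript{RL}{a}I^{\beta p+\gamma}_{\varphi(x)}u(x).
\end{align*}
The crucial simplification is that each inner composition collapses: since $\prescript{RL}{a}I^{\beta p+\gamma}_{\varphi(x)}=\prescript{RL}{a}D^{-(\beta p+\gamma)}_{\varphi(x)}$ has order of strictly negative real part (because $\Real(\beta)>0$ and $\Real(\gamma)>0$), Proposition~\ref{pro:c} applies with its ``$\beta$'' equal to $-(\beta p+\gamma)$ and gives
\begin{align*}
\prescript{RL}{a}D^{\alpha-\beta n}_{\varphi(x)}\circ\prescript{RL}{a}I^{\beta p+\gamma}_{\varphi(x)}u(x)=\prescript{RL}{a}D^{\alpha-\gamma-\beta(n+p)}_{\varphi(x)}u(x),
\end{align*}
uniformly in $n,p$ and regardless of the sign of $\Real(\alpha-\beta n)$. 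Collecting terms according to $k=n+p$ then rewrites the left-hand side as the single series $\sum_k\big[\sum_{n+p=k}\overline{a}_n a_p\,\Gamma(\beta n-\alpha+m)\Gamma(\beta p+\gamma)\big]\prescript{RL}{a}D^{\alpha-\gamma-\beta k}_{\varphi(x)}u(x)$.

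Next I would apply Theorem~\ref{lem:Sf} directly to the right-hand side $\prescript{\mathcal{A}}{a}D^{\alpha-\gamma,\beta}_{\varphi(x)}u(x)$, which is a legitimate derivative precisely because the hypothesis $\Real(\gamma)<\Real(\alpha)$ forces $\Real(\alpha-\gamma)>0$; this produces $\sum_k\overline{a}_k(\alpha-\gamma,\beta)\,\Gamma(\beta k-\alpha+\gamma+m)\,\prescript{RL}{a}D^{\alpha-\gamma-\beta k}_{\varphi(x)}u(x)$. Comparing the two series term by term in the operators $\prescript{RL}{a}D^{\alpha-\gamma-\beta k}_{\varphi(x)}$ yields exactly the stated identity for each $k$, and conversely the identity for all $k$ makes the two series coincide, which delivers the ``if and only if''.

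The point requiring the most care is the coefficient matching together with the bookkeeping of the integer $m$. For the matching I would justify that a locally uniformly convergent series of Riemann--Liouville differintegrals of pairwise distinct orders $\alpha-\gamma-\beta k$ is determined by its coefficients (testing against suitable powers of $\varphi(x)-\varphi(a)$, on which operators of different order act linearly independently), so that equality of the two series forces equality of coefficients, securing the ``only if'' direction. For the integer, one should observe that the right-hand coefficient $\overline{a}_k\,\Gamma(\beta k-(\alpha-\gamma)+m)$ is independent of which admissible integer is substituted for $m$, since by the defining relation $\mathcal{A}_\Gamma\cdot\overline{\mathcal{A}}_\Gamma=1$ it is simply the $k$-th power-series coefficient of $\overline{\mathcal{A}}_\Gamma$; this is what permits the single choice $m=\lfloor\Real(\alpha)\rfloor+1$ to be used consistently on both sides, even though $\alpha-\gamma$ would nominally call for $\lfloor\Real(\alpha-\gamma)\rfloor+1$. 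The convergence and the rearrangement of the double sum are routine, following from the local uniform convergence already established in Theorems~\ref{lem:series} and~\ref{lem:Sf}.
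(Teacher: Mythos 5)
Your proposal is correct and follows essentially the same route as the paper's proof: expand both operators via the series formulae of Theorems~\ref{lem:Sf} and~\ref{lem:series}, collapse each inner composition $\prescript{RL}{a}D^{\alpha-\beta n}_{\varphi(x)}\circ\prescript{RL}{a}I^{\beta p+\gamma}_{\varphi(x)}$ using Proposition~\ref{pro:c}, reindex by $k=n+p$, and match coefficients against the series for $\prescript{\mathcal{A}}{a}D^{\alpha-\gamma,\beta}_{\varphi(x)}u(x)$. Your two additional justifications --- the linear-independence argument securing the ``only if'' direction, and the observation that $\overline{a}_k\,\Gamma(\beta k-(\alpha-\gamma)+m)$ is the $k$-th coefficient of $1/\mathcal{A}_{\Gamma}$ and hence independent of the admissible integer substituted for $m$ --- are points the paper leaves implicit, and they strengthen rather than alter the argument.
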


\begin{proof}
Using the series formulae for $\prescript{\mathcal{A}}{a}D^{\alpha,\beta}_{\varphi(x)}$ and  $\prescript{\mathcal{A}}{a}I^{\gamma,\beta}_{\varphi(x)}$ (Theorem \ref{lem:series} and Theorem \ref{lem:Sf}), as well as the semigroup property for fractional calculus with respect to functions (specifically Proposition \ref{pro:c}), we have:
	\begin{align}	
	\prescript{\mathcal{A}}{a}D^{\alpha,\beta}_{\varphi(x)}\circ \prescript{\mathcal{A}}{a}I^{\gamma,\beta}_{\varphi(x)}u(x)&=\left[\sum_{n=0}^{\infty} \overline{a}_n \Gamma(\beta n-\alpha+m) \prescript{RL}{a}D^{\alpha-\beta n}_{\varphi(x)}\right]\circ \prescript{\mathcal{A}}{a}I^{\gamma,\beta}_{\varphi(x)}u(x),\nonumber\\
	&=\sum_{n=0}^{\infty} \overline{a}_n \Gamma(\beta n-\alpha+m) \prescript{RL}{a}D^{\alpha-\beta n}_{\varphi(x)}\circ \left[\sum_{p=0}^{\infty} a_p \Gamma(\beta p+\gamma) \prescript{RL}{a}I^{\beta p+\gamma}_{\varphi(x)}u(x)\right],\nonumber\\
	&=\sum_{n, p}^{} \overline{a}_n a_p \Gamma(\beta n-\alpha+m)	\Gamma(\beta p+\gamma) \prescript{RL}{a}D^{\alpha-\beta n}_{\varphi(x)} \circ \prescript{RL}{a}I^{\beta p+\gamma}_{\varphi(x)} u(x),\nonumber\\
	&=\sum_{n, p}^{} \overline{a}_n a_p \Gamma(\beta n-\alpha+m)	\Gamma(\beta p+\gamma) \prescript{RL}{a}D^{\alpha-\beta n-\beta p-\gamma}_{\varphi(x)} u(x),\nonumber\\
	&=\sum_{n, p}^{} \overline{a}_n a_p \Gamma(\beta n-\alpha+m)	\Gamma(\beta p+\gamma) \prescript{RL}{a}D^{\alpha-\gamma-\beta(n+p)}_{\varphi(x)} u(x),\nonumber\\
	&=\sum_{k=0}^{\infty} \left[\sum_{n+p=k}\overline{a}_n a_p \Gamma(\beta n-\alpha+m) \Gamma(\beta p+\gamma) \prescript{RL}{a}D^{\alpha-\gamma-\beta k}_{\varphi(x)}u(x)\right],\label{o}
	\end{align}	
where the coefficients $\overline{a}_n$ and $a_p$ are dependent on parameters as $\overline{a}_n(\alpha,\beta)$ and $a_p(\gamma,\beta)$.
	
	Meanwhile the series formula in Theorem~\ref{lem:Sf} yields
\begin{align}
\prescript{\mathcal{A}}{a}D^{\alpha-\gamma,\beta}_{\varphi(x)}u(x)=\sum_{k=0}^{\infty} \overline{a}_k \Gamma(\beta k-\alpha+\gamma+m)
\prescript{RL}{a}D^{\alpha-\gamma-\beta k}_{\varphi(x)}, \label{y}
\end{align}
where the coefficients $\overline{a}_k$ are dependent on parameters as $\overline{a}_k(\alpha-\gamma,\beta)$.

	Therefore the right hand sides of~\eqref{o} and~\eqref{y} are equal if and only if
	\begin{align*}
	\overline{a}_k(\alpha-\gamma,\beta)\Gamma(\beta k-\alpha+\gamma+m)=\sum_{n+p=k}^{} \overline{a}_n(\alpha, \beta) a_{p}(\gamma, \beta) \Gamma(\beta n-\alpha+m)\Gamma(\beta p+\gamma)
	\end{align*}
	holds for all values of $k$.
\end{proof}

\section{Functional analysis of the new generalised fractional calculus} \label{chap:function}

In the above discussion, several new operators have been defined and their properties have been analysed, however no mention has been made of the function spaces on which these operators act. It is important to know which types of functions can have such operators applied to them, in order that we can know how wide-ranging the new theory actually is. It turns out that the appropriate spaces to consider are weighted $L^p$ spaces, since $\varphi'(x)\,\mathrm{d}x$ is equivalent to a new measure $\mathrm{d}\varphi(x)$.

\subsection{Function spaces for the generalised fractional integral operator}

In this subsection, we establish an appropriate function space for the generalised fractional integral operator $\prescript{\mathcal{A}}{a}I^{\alpha, \beta}_{\varphi(x)}$ with respect to a function. Specifically, we prove that for any $p\in[1,\infty)$ this operator is bounded in the weighted $L^p$ space $L_{\varphi}^{p} [a, b]$, which we define as follows. This will justify the remark made during Definition \ref{Def:GF} about the weighted $L^1$ function space for this operator.

\begin{definition} \label{Genspace}
	Let $1 \leq  p <  \infty$. The space $L_{\varphi}^{p} [a, b]$ is defined to consist of all Lebesgue measurable functions $u:[a,b]\rightarrow\mathbb{C}$ for which $ \|u\|_{L_{\varphi}^{p}} < \infty$, where the norm is defined by
	\begin{equation} \label{generalizednorm}
		\|u\|_{L_{\varphi}^{p}} = \left( \int_{a}^{b} \left| u(x)\right|^{p} \varphi'(x) \,\mathrm{d}x \right)^{\frac{1}{p}}.
	\end{equation}
\end{definition}

If we consider $ \varphi(x)=x $, then the space $L_{\varphi}^{p} [a, b]$ coincides with the space $ L^{p}[a,b] $ with the usual norm
\[
\|u\|_{p} = \left( \int_{a}^{b} \left|u(x)\right|^{p}  dx \right)^{\frac{1}{p}} \ \ \ \text{for }1 \leq  p <  \infty.
\]

In the theorem below, we prove that, for a non-negative increasing function $ \varphi $, the generalised fractional integral operator of any real-part-positive order $\alpha$ with respect to $\varphi$ is well-defined on the weighted space $L_{\varphi}^{p} [a, b]$.

\begin{theorem}\label{thm4.1}
	With all notation as in Definition \ref{Def:GF} and \ref{Genspace}, we have a well-defined bounded operator $\prescript{\mathcal{A}}{a}I^{\alpha, \beta}_{\varphi(x)}$ from the space $L_{\varphi}^{p}[a, b]$ to itself, with
	\begin{equation}
		\left\|  	\prescript{\mathcal{A}}{a}I^{\alpha, \beta}_{\varphi(x)} u \right\|_{L_{\varphi}^{p}} \leq K \|u\|_{L_{\varphi}^{p}},
	\end{equation}
	where the constant $K$ is defined by
	\begin{equation} \label{Knorm}
		K = \frac{\left(\varphi(b)-\varphi(a)\right)^{\alpha}}{\alpha} \sup_{\left| \tau \right|< \left(  \varphi(b)-\varphi(a) \right)^{\beta} } \left| \mathcal{A}(\tau) \right|.
	\end{equation}
\end{theorem}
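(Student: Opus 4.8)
The plan is to exploit the conjugation structure behind Proposition~\ref{pro:x1}: the operator with respect to $\varphi$ is a change-of-variables conjugate of the ``straight'' analytic-kernel operator on the image interval, and under the \emph{same} substitution the weighted norm $\|\cdot\|_{L_\varphi^p}$ collapses to the ordinary norm $\|\cdot\|_{L^p}$. Concretely, I would substitute $y=\varphi(x)$ and $s=\varphi(t)$, which is legitimate since $\varphi\in C^1$ is increasing with $\varphi'>0$, so that $\mathrm{d}\varphi(x)=\varphi'(x)\,\mathrm{d}x$ is a positive measure and $\varphi^{-1}$ exists, and set $v=u\circ\varphi^{-1}$ on $[\varphi(a),\varphi(b)]$. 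Then $\|u\|_{L_\varphi^p}=\|v\|_{L^p[\varphi(a),\varphi(b)]}$ directly from the substitution, and
\[
\prescript{\mathcal{A}}{a}I^{\alpha,\beta}_{\varphi(x)}u(x)\Big|_{x=\varphi^{-1}(y)}=\int_{\varphi(a)}^{y}(y-s)^{\alpha-1}\mathcal{A}\big((y-s)^{\beta}\big)v(s)\,\mathrm{d}s=:\widetilde{T}v(y).
\]
Thus it suffices to prove $\|\widetilde{T}v\|_{L^p}\le K\|v\|_{L^p}$ on $[\varphi(a),\varphi(b)]$, which removes $\varphi$ from the problem entirely.

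Next I would observe that $\widetilde{T}$ is a Volterra convolution with kernel $g(r)=r^{\alpha-1}\mathcal{A}(r^{\beta})$. Writing $L=\varphi(b)-\varphi(a)$, substituting $r=y-s$, and extending $v$ by zero outside $[\varphi(a),\varphi(b)]$, I would apply the generalized Minkowski (integral) inequality to pull the $L^p$-norm inside the $r$-integral:
\[
\|\widetilde{T}v\|_{L^p}\le\int_0^{L}|g(r)|\,\|v(\cdot-r)\|_{L^p}\,\mathrm{d}r\le\Big(\int_0^{L}|g(r)|\,\mathrm{d}r\Big)\|v\|_{L^p},
\]
where I use the translation contraction $\|v(\cdot-r)\|_{L^p}\le\|v\|_{L^p}$, valid for the zero-extended $v$ since shifting a compactly supported function only moves mass off the interval. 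It then remains to estimate $\|g\|_{L^1[0,L]}$.

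To bound the kernel norm I would use $|g(r)|=r^{\Real(\alpha)-1}\,|\mathcal{A}(r^{\beta})|$, together with $|r^{\beta}|=r^{\Real(\beta)}\le L^{\Real(\beta)}$, so that $|\mathcal{A}(r^{\beta})|\le M:=\sup_{|\tau|<L^{\beta}}|\mathcal{A}(\tau)|$. This supremum is finite because $\mathcal{A}$ is analytic, hence continuous and bounded, on the closed disc in question, which lies inside $\mathcal{D}(0,R)$ thanks to the radius-of-convergence hypothesis $R>(\varphi(b)-\varphi(a))^{\Real(\beta)}$. Integrating, $\int_0^{L}r^{\alpha-1}\,\mathrm{d}r=L^{\alpha}/\alpha$, which reproduces exactly the constant $K$ of \eqref{Knorm}; note that the singularity of $g$ at $r=0$ is integrable precisely because $\Real(\alpha)>0$.

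The main obstacle, and the step deserving the most care, is the reduction-and-interchange machinery: rigorously justifying the change of variables and the application of Minkowski's integral inequality to the singular, $\varphi$-dependent kernel, and verifying $M<\infty$ from the analyticity and radius conditions. Once these are secured the estimate is immediate. The only cosmetic point is that, for genuinely complex $\alpha$, the modulus gives $|r^{\alpha-1}|=r^{\Real(\alpha)-1}$, so the sharp constant carries $\Real(\alpha)$ in place of $\alpha$ and agrees with \eqref{Knorm} as written when $\alpha$ is real.
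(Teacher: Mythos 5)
Your proof is correct, and it reaches the paper's constant through the same two analytic ingredients the paper uses — the generalized Minkowski (integral) inequality and boundedness of $\mathcal{A}$ on a compact subdisc of $\mathcal{D}(0,R)$, followed by the elementary integral $\int_0^L r^{\alpha-1}\,\mathrm{d}r = L^{\alpha}/\alpha$ — but the packaging is genuinely different. The paper stays in the weighted space throughout: it substitutes $\varphi(\tau)=\varphi(x)-\varphi(t)$ only inside the inner integral, applies Minkowski to the resulting weighted double integral, and re-substitutes to bound the inner norm by $\|u\|_{L^p_{\varphi}}$. You instead conjugate first: your global substitution $y=\varphi(x)$, $s=\varphi(t)$ is precisely the isometry $Q_{\varphi}^{-1}\colon L^p_{\varphi}[a,b]\to L^p[\varphi(a),\varphi(b)]$, under which the operator becomes the unweighted analytic-kernel Volterra convolution with kernel $g(r)=r^{\alpha-1}\mathcal{A}(r^{\beta})$, and then Young's inequality $\|g\ast v\|_{L^p}\le\|g\|_{L^1}\|v\|_{L^p}$ (which you correctly derive from Minkowski plus translation contraction of the zero-extension) finishes the job. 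Your route buys two things: it exhibits the theorem as the $\varphi=\mathrm{id}$ case transported by a conjugation in the spirit of Proposition~\ref{pro:x1}, and it avoids the paper's implicit substitution $\varphi(\tau)=\varphi(x)-\varphi(t)$, which tacitly requires values such as $0$ and $\varphi(x)-\varphi(a)$ to lie in the range of $\varphi$ (not automatic, e.g.\ for $\varphi(x)=e^x$ on $[0,1]$), whereas your change of variables is valid for every admissible monotonic $\varphi$. Two caveats, which you share with the paper's own argument and in fact flag more carefully than the paper does: finiteness of the supremum in \eqref{Knorm} really requires the analyticity radius to exceed $(\varphi(b)-\varphi(a))^{\Real(\beta)}$, while Definition~\ref{def:F} only guarantees $R>(b-a)^{\Real(\beta)}$, so this is an extra (natural) hypothesis unless $\varphi$ is $1$-Lipschitz as in Theorem~\ref{lem:series}; and for non-real $\alpha$ the sharp constant carries $\Real(\alpha)$ in place of $\alpha$, exactly as you note.
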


\begin{proof}
	Using the definition of $	\prescript{\mathcal{A}}{a}I^{\alpha, \beta}_{\varphi(x)}$ and Eq. \eqref{generalizednorm}, we find
	\[
	\left\| 	\prescript{\mathcal{A}}{a}I^{\alpha, \beta}_{\varphi(x)} u\right\|_{L_{\varphi}^{p}} =  \left( \int_{a}^{b} \left| \int_{a}^{x} \varphi'(t)[\varphi(x)-\varphi(t)]^{\alpha-1} \mathcal{A}[(\varphi(x)-\varphi(t))^{\beta}]u(t)\,\mathrm{d}t\right|^{p} \varphi'(x) \,\mathrm{d}x \right)^{\frac{1}{p}}.
	\]
	Making the substitution $\varphi(\tau) =\varphi(x)-\varphi(t) $ in the inner integral, which gives $\varphi'(\tau)\,\mathrm{d}\tau =-\varphi'(t)\,\mathrm{d}t$, we get	
	\begin{multline*}
		\left\| \prescript{\mathcal{A}}{a}I^{\alpha, \beta}_{\varphi(x)} u\right\|_{L_{\varphi}^{p}} \\
		= \left( \int_{a}^{b} \left| \int_{0}^{\varphi^{-1} \left( \varphi(x)-\varphi(a) \right)} \varphi'(\tau)\varphi(\tau)^{\alpha-1} \mathcal{A}[\varphi(\tau)^{\beta}]u\left( \varphi^{-1} \left( \varphi(x)-\varphi(\tau) \right) \right)\,\mathrm{d}\tau\right|^{p} \varphi'(x) \,\mathrm{d}x \right)^{\frac{1}{p}} \\
		=  \left( \int_{a}^{b} \left| \int_{0}^{\varphi^{-1} \left( \varphi(x)-\varphi(a) \right)} \varphi'(\tau)\varphi(\tau)^{\alpha-1} \mathcal{A}[\varphi(\tau)^{\beta}]{\varphi'(x)}^{\frac{1}{p}}u\left( \varphi^{-1} \left( \varphi(x)-\varphi(\tau) \right) \right)\,\mathrm{d}\tau\right|^{p}  \,\mathrm{d}x \right)^{\frac{1}{p}}.
	\end{multline*}
	Since $ u(x) \in L_{\varphi}^{p}[a,b]$, it follows that $ {\varphi'(x)}^{\frac{1}{p}}u(x) \in  L^{p}(a,b)$, and hence by application of the generalised Minkowski inequality, we have
	\begin{align*}
		\left\| \prescript{\mathcal{A}}{a}I^{\alpha, \beta}_{\varphi(x)} u\right\|_{L_{\varphi}^{p}} &\leq  \int_{0}^{\varphi^{-1} \left( \varphi(b)-\varphi(a) \right)}  \left( \int_{\varphi^{-1} \left( \varphi(\tau)+\varphi(a) \right)}^{b} \left| u\left( \varphi^{-1} \left( \varphi(x)-\varphi(\tau) \right) \right) \right|^{p} \varphi'(x)\,\mathrm{d}x \right)^{\frac{1}{p}} \\ &\hspace{7cm} \times\varphi'(\tau)\varphi(\tau)^{\alpha-1} \mathcal{A}[\varphi(\tau)^{\beta}] \,\mathrm{d}\tau \\
		&\hspace{-2cm}= \int_{0}^{\varphi^{-1} \left( \varphi(b)-\varphi(a) \right)}  \left( \int_{a}^{\varphi^{-1} \left( \varphi(b)-\varphi(\tau) \right)} \left| u\left( t \right) \right|^{p} \varphi'(t)\,\mathrm{d}t \right)^{\frac{1}{p}}   \varphi'(\tau)\varphi(\tau)^{\alpha-1} \mathcal{A}[\varphi(\tau)^{\beta}] \,\mathrm{d}\tau.
	\end{align*}
	Because $ \mathcal{A} $ is analytic on $\mathcal{D}(0,R)$ by definition, the function $  \mathcal{A}[\varphi(\tau)^{\beta}] $ is bounded on the finite interval $ [0,\varphi^{-1} \left( \varphi(b)-\varphi(a) \right)] $. Moreover, since $ u $ is in the space $L_{\varphi}^{p}[a, b]$, we get
	\begin{equation*}
		\left\| \prescript{\mathcal{A}}{a}I^{\alpha, \beta}_{\varphi(x)} u \right\|_{L_{\varphi}^{p}} \leq K \|u\|_{L_{\varphi}^{p}},
	\end{equation*}
	where
	\begin{align*} 
		K &\coloneqq \sup_{\left| \tau \right|< \left(  \varphi(b)-\varphi(a) \right)^{\beta} } \left| \mathcal{A}(\tau) \right|\int_{0}^{\varphi^{-1} \left( \varphi(b)-\varphi(a) \right)} \varphi'(\tau)\varphi(\tau)^{\alpha-1} \,\mathrm{d}\tau \\
		&=\frac{\left(\varphi(b)-\varphi(a)\right)^{\alpha}}{\alpha} \sup_{\left| \tau \right|< \left(  \varphi(b)-\varphi(a) \right)^{\beta} } \left| \mathcal{A}(\tau) \right|,
	\end{align*}
	and this completes the proof.
\end{proof}

\subsection{Function spaces for the generalised fractional derivative operator}

In this subsection, we establish an appropriate function space for the generalised fractional derivative operator $\prescript{\mathcal{A}_R}{a}D^{\alpha,\beta}_{\varphi(x)}$ with respect to a function. In order to do this, we must define a function space $C_{\alpha,\varphi}^{} [a, b]$ as follows, a weighted version of the function space $C_{\alpha}$ which is often used in operational calculus \cite{luchko, Luchko2,Srivastava}.

\begin{definition}\label{weightedspaces}
	Let $ [a,b] $ $ (0<a<b<\infty) $ be a finite interval on the half-axis $ \R^{+} $. Then, the weighted space $C_{\alpha,\varphi}^{} [a, b]$ of functions $ u $ on $ (a,b] $ is defined by
	\begin{equation*} \label{generalizedweightedspace1}
		C_{\alpha,\varphi}^{} [a, b] = \{ u : (a,b] \to \R\;\; :\;\; [\varphi(x)-\varphi(a)]^{\alpha} u(x) \in C[a,b] \},
	\end{equation*}
	with the norm 
	\begin{equation}\label{Calphaphinorm}
		\|u\|_{C_{\alpha,\varphi}} = \max_{x \in [a,b]} \Big| [\varphi(x)-\varphi(a)]^{\alpha} u(x) \Big|.
	\end{equation}
	Moreover, the weighted space $C_{\alpha,\varphi}^{m} [a, b]$ of functions $ u $ on $ (a,b] $ is defined by
	\begin{equation*} \label{generalizedweightedspace2}
		C_{\alpha,\varphi}^{m} [a, b] = \{ u : (a,b] \to \R\;\; :\;\; u(x) \in C^{m-1} [a,b],\; u^{(m)}(x) \in C_{\alpha,\varphi} [a,b]  \},
	\end{equation*}
	with the norm 
	\begin{equation*}
		\|u\|_{C_{\alpha,\varphi}^{m}} = \sum_{k=0}^{m-1} \left\| u^{(k)} \right\|_{\infty} + 	\left\|u^{(m)}\right\|_{C_{\alpha,\varphi}},
	\end{equation*}
	where $\|\cdot\|_{\infty}$ is the usual supremum norm on $C[a,b]$.
\end{definition}

\begin{theorem}\label{thm4.2}
	With all notation as in Definition \ref{Def:GD} and \ref{weightedspaces}, and with $\overline{\mathcal{A}}$ assumed to be analytic on the disc $\mathcal{D}(0,R)$ mentioned in Definition \ref{def:F}, we have a well-defined bounded operator $\prescript{\mathcal{A}_R}{a}D^{\alpha,\beta}_{\varphi(x)}$ on the space $C_{\alpha,\varphi}^{} [a, b]$ and
	\begin{equation}
		\left\|  \prescript{\mathcal{A}_R}{a}D^{\alpha,\beta}_{\varphi(x)} u \right\|_{C_{\alpha,\varphi}} \leq M \|u\|_{C_{\alpha,\varphi}^{m}},
	\end{equation}
	where the constant $M$ is defined by
	\begin{equation} 
		M = \frac{\left(\varphi(b)-\varphi(a)\right)^{m-\alpha}}{m-\alpha} \sup_{\left| \tau \right|< \left(  \varphi(b)-\varphi(a) \right)^{\beta} } \left| \mathcal{\overline{A}}(\tau) \right|.
	\end{equation}
\end{theorem}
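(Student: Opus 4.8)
The plan is to reduce the boundedness of the derivative operator to the integral estimate already established in Theorem~\ref{thm4.1}. Writing $D_{\varphi}=\frac{1}{\varphi'(x)}\cdot\frac{\mathrm{d}}{\mathrm{d}x}$ and $m=\lfloor\Real(\alpha)\rfloor+1$ as in Definition~\ref{Def:GD}, we start from
\[
\prescript{\mathcal{A}_R}{a}D^{\alpha,\beta}_{\varphi(x)} u = D_{\varphi}^{m}\,\prescript{\overline{\mathcal{A}}}{a}I^{m-\alpha,\beta}_{\varphi(x)} u .
\]
Since $\Real(\alpha)<m$, the operator on the right is a genuine generalised fractional integral of order $m-\alpha$ with kernel $\overline{\mathcal{A}}$, to which Theorem~\ref{thm4.1} applies. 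Observe that the constant $M$ in the statement is exactly the constant $K$ of Theorem~\ref{thm4.1} under the substitutions $\alpha\mapsto m-\alpha$ and $\mathcal{A}\mapsto\overline{\mathcal{A}}$; this already signals that the core of the argument is an estimate for $\prescript{\overline{\mathcal{A}}}{a}I^{m-\alpha,\beta}_{\varphi(x)}$, and that the $m$-fold operator $D_{\varphi}^{m}$ must be disposed of without spoiling that constant.

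The key step is to carry $D_{\varphi}^{m}$ through the integral. I would do this by differentiating under the integral sign and integrating by parts $m$ times in $t$, using that $D_{\varphi}$ differentiates with respect to $\varphi(x)$, to obtain a Caputo-type decomposition
\[
\prescript{\mathcal{A}_R}{a}D^{\alpha,\beta}_{\varphi(x)} u(x)=\prescript{\overline{\mathcal{A}}}{a}I^{m-\alpha,\beta}_{\varphi(x)}\big(D_{\varphi}^{m} u\big)(x)+\sum_{k=0}^{m-1} c_{k}\,[\varphi(x)-\varphi(a)]^{k-\alpha}\,\big(D_{\varphi}^{k} u\big)(a),
\]
with coefficients $c_{k}$ read off from the power series of $\overline{\mathcal{A}}$ at the endpoint. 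This is the analytic-kernel, with-respect-to-functions analogue of the classical identity relating the Riemann--Liouville and Caputo derivatives, and it may equally be obtained by applying the same conjugation mechanism as in Proposition~\ref{pro:x1} termwise to the series formula of Theorem~\ref{lem:Sf}, thereby reducing to the case $\varphi(x)=x$ via $Q_{\varphi}\circ\prescript{\mathcal{A}_R}{\varphi(a)}D^{\alpha,\beta}_{x}\circ Q_{\varphi}^{-1}$ and transferring back.

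With this representation in hand, I would multiply by the weight $[\varphi(x)-\varphi(a)]^{\alpha}$ and take the maximum over $[a,b]$ dictated by the norm \eqref{Calphaphinorm}. Each boundary term becomes $c_{k}[\varphi(x)-\varphi(a)]^{k}\,(D_{\varphi}^{k}u)(a)$, which is continuous and bounded on $[a,b]$; after expressing $D_{\varphi}^{k}$ through the ordinary derivatives $u^{(j)}$ with $j\le k$ (which introduces only finitely many $\varphi$-dependent constants), these are controlled by the lower-order part $\sum_{k=0}^{m-1}\|u^{(k)}\|_{\infty}$ of the $C_{\alpha,\varphi}^{m}$-norm. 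The leading term $[\varphi(x)-\varphi(a)]^{\alpha}\,\prescript{\overline{\mathcal{A}}}{a}I^{m-\alpha,\beta}_{\varphi(x)}(D_{\varphi}^{m}u)$ I would then estimate as in the proof of Theorem~\ref{thm4.1}: bound $|\overline{\mathcal{A}}|$ by its supremum over the disc $|\tau|<(\varphi(b)-\varphi(a))^{\beta}$, and reduce the remaining kernel integral by the substitution $s=\varphi(x)-\varphi(t)$, producing the factor $\frac{(\varphi(x)-\varphi(a))^{m-\alpha}}{m-\alpha}$ and hence $M$ after taking the supremum over $x$.

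The main obstacle is carrying $D_{\varphi}^{m}$ through the integral rigorously: the kernel $[\varphi(x)-\varphi(t)]^{m-\alpha-1}$ is singular at $t=x$ because $m-1<\Real(\alpha)$ forces the exponent to have negative real part, so differentiation under the integral is not directly licensed and the endpoint contributions must be extracted with care — precisely the point where conjugating to $\varphi(x)=x$ via Proposition~\ref{pro:x1}, or regularising the singular factor before differentiating, is essential. A second, quantitative subtlety concerns the constant: since $C_{\alpha,\varphi}^{m}[a,b]$ only guarantees $u^{(m)}\in C_{\alpha,\varphi}[a,b]$, the function $D_{\varphi}^{m}u$ fed into the integral carries the weight $[\varphi(t)-\varphi(a)]^{-\alpha}$, and one must check that the interaction of this weight with the target weight $[\varphi(x)-\varphi(a)]^{\alpha}$ and with the kernel still organises itself so as to mirror Theorem~\ref{thm4.1} and deliver the clean constant $M$ (rather than a beta-function factor). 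Finally, well-definedness, namely that $\prescript{\mathcal{A}_R}{a}D^{\alpha,\beta}_{\varphi(x)}u$ actually lies in $C_{\alpha,\varphi}[a,b]$, follows once the displayed representation is justified and each summand is seen to belong to this space.
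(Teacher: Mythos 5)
Your plan has the same skeleton as the paper's proof: strip off the $m$-fold derivative, then estimate the remaining generalised integral of order $m-\alpha$ exactly as in Theorem~\ref{thm4.1} (bound $|\overline{\mathcal{A}}|$ by its supremum on the disc, substitute $\varphi(\tau)=\varphi(x)-\varphi(t)$, integrate the power kernel), arriving at the constant $M$, which, as you correctly observe, is $K$ with $\alpha\mapsto m-\alpha$ and $\mathcal{A}\mapsto\overline{\mathcal{A}}$. The two arguments differ only in how the derivative is removed. Writing $D_{\varphi}=\frac{1}{\varphi'(x)}\cdot\frac{\mathrm{d}}{\mathrm{d}x}$ as you do, the paper disposes of it in a single asserted inequality,
\begin{multline*}
\max_{x\in[a,b]}\left|[\varphi(x)-\varphi(a)]^{\alpha}\,D_{\varphi}^{m}\,\prescript{\overline{\mathcal{A}}}{a}I^{m-\alpha,\beta}_{\varphi(x)}u(x)\right| \\
\le \left\|u\right\|_{C^{m}_{\alpha,\varphi}}\;\max_{x\in[a,b]}\left|\int_{a}^{x}\varphi'(t)[\varphi(x)-\varphi(t)]^{m-\alpha-1}\,\overline{\mathcal{A}}\left[(\varphi(x)-\varphi(t))^{\beta}\right]\mathrm{d}t\right|,
\end{multline*}
that is, it silently passes the $m$ derivatives onto $u$ inside the integral, with no boundary terms and no tracking of weights, and absorbs everything into $\|u\|_{C^{m}_{\alpha,\varphi}}$; after that it performs the same substitution and kernel computation you describe. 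This asserted step is precisely the Caputo-type commutation you propose to prove, so you have located the crux of the matter accurately: your proposal is an attempt to justify what the published proof merely asserts.

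As a proof attempt, however, yours has a genuine gap, and it is exactly the one you flag and leave open. The Caputo-type decomposition is never derived: neither the $m$-fold integration by parts across the singular kernel nor the conjugation route via Proposition~\ref{pro:x1} is actually carried out (and, incidentally, for a general kernel the boundary contributions are power series in $[\varphi(x)-\varphi(a)]^{\beta}$ rather than single powers with constant coefficients $c_k$). More seriously, even granting the decomposition, it cannot produce the stated constant. Membership in $C^{m}_{\alpha,\varphi}[a,b]$ controls only $[\varphi(t)-\varphi(a)]^{\alpha}u^{(m)}(t)$, so the input to your leading term obeys only $|D_{\varphi}^{m}u(t)|\lesssim[\varphi(t)-\varphi(a)]^{-\Real\alpha}$; inserting this and substituting $y=\varphi(t)$ leaves the integral $\int_{\varphi(a)}^{\varphi(x)}(\varphi(x)-y)^{m-\Real\alpha-1}(y-\varphi(a))^{-\Real\alpha}\,\mathrm{d}y$. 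For $\Real\alpha<1$ this is a beta integral and yields the constant $B(m-\alpha,1-\alpha)\,(\varphi(b)-\varphi(a))^{m-\alpha}\sup|\overline{\mathcal{A}}|$, not $M$; for $m\ge 2$ (so that $\Real\alpha>1$) it diverges at the lower endpoint, and the leading term need not even be finite without cancellation. So the route as sketched stalls at its central step and cannot terminate in the clean factor $\frac{1}{m-\alpha}$. Note that this objection applies verbatim to the paper's one-line inequality displayed above, which quietly assumes both the commutation and the harmless interaction of the weights: your ``obstacles'' paragraph is in effect a sound critique of the published proof. But diagnosing the obstruction is not the same as overcoming it, and your proposal does not overcome it.
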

\begin{proof}
	Using the definition of $  \prescript{\mathcal{A}_R}{a}D^{\alpha,\beta}_{\varphi(x)} $ and Eq. \eqref{Calphaphinorm}, we obtain
	\begin{align*}
		\left\|  \prescript{\mathcal{A}_R}{a}D^{\alpha,\beta}_{\varphi(x)} u \right\|_{C_{\alpha,\varphi}} &= \left\| \left(\frac{1}{\varphi'(x)}\cdot\frac{\mathrm{d}}{\mathrm{d}x}\right)^m\prescript{\mathcal{\overline{A}}}{a}I^{m-\alpha,\beta}_{\varphi(x)} u \right\|_{C_{\alpha,\varphi}} \\
		&= \max_{x \in [a,b]} \left| [\varphi(x)-\varphi(a)]^{\alpha} \left(\frac{1}{\varphi'(x)}\cdot\frac{\mathrm{d}}{\mathrm{d}x}\right)^m\prescript{\mathcal{\overline{A}}}{a}I^{m-\alpha,\beta}_{\varphi(x)}u(x) \right|\\
		&= \max_{x \in [a,b]} \bigg| [\varphi(x)-\varphi(a)]^{\alpha} \left(\frac{1}{\varphi'(x)}\cdot\frac{\mathrm{d}}{\mathrm{d}x}\right)^m \\&\hspace{3cm}\times\int_{a}^{x} \varphi'(t)[\varphi(x)-\varphi(t)]^{m-\alpha-1} \mathcal{\overline{A}}[(\varphi(x)-\varphi(t))^{\beta}]u(t)\,\mathrm{d}t  \bigg| \\
		&\leq \|u\|_{C_{\alpha,\varphi}^{m}}\max_{x \in [a,b]} \left| \int_{a}^{x} \varphi'(t)[\varphi(x)-\varphi(t)]^{m-\alpha-1} \mathcal{\overline{A}}[(\varphi(x)-\varphi(t))^{\beta}]\,\mathrm{d}t  \right|.
	\end{align*}
	Making the substitution $\varphi(\tau) =\varphi(x)-\varphi(t) $ in the above integral, which gives $\varphi'(\tau)\,\mathrm{d}\tau =-\varphi'(t)\,\mathrm{d}t$, we get	
	\begin{equation*}
		\left\|  \prescript{\mathcal{A}_R}{a}D^{\alpha,\beta}_{\varphi(x)} u \right\|_{C_{\alpha,\varphi}} \leq \|u\|_{C_{\alpha,\varphi}^{m}}\max_{x \in [a,b]} \Big|   \int_{0}^{\varphi^{-1} \left( \varphi(x)-\varphi(a) \right)}  \varphi'(\tau)\varphi(\tau)^{m-\alpha-1} \mathcal{\overline{A}}[\varphi(\tau)^{\beta}]\,\mathrm{d}\tau  \Big|.
	\end{equation*}
	Because $ \mathcal{\overline{A}} $ is analytic on the disc $\mathcal{D}(0,R)$, the function $  \mathcal{\overline{A}}[\varphi(\tau)^{\beta}] $ is bounded on the finite interval $ [0,\varphi^{-1} \left( \varphi(x)-\varphi(a) \right)] $. Thus, we get
	\begin{equation*}
		\left\|  \prescript{\mathcal{A}_R}{a}D^{\alpha,\beta}_{\varphi(x)} u \right\|_{C_{\alpha,\varphi}}  \leq M \|u\|_{C_{\alpha,\varphi}^{m}},
	\end{equation*}
	where
	\begin{align*} 
		M &:= \max_{x \in [a,b]}\sup_{\left| \tau \right|< \left(  \varphi(x)-\varphi(a) \right)^{\beta} } \left| \mathcal{\overline{A}}(\tau) \right|  \Big|   \int_{0}^{\varphi^{-1} \left( \varphi(x)-\varphi(a) \right)}  \varphi'(\tau)\varphi(\tau)^{m-\alpha-1} \,\mathrm{d}\tau   \Big|\\&=\frac{\left(\varphi(b)-\varphi(a)\right)^{m-\alpha}}{m-\alpha} \sup_{\left| \tau \right|< \left(  \varphi(b)-\varphi(a) \right)^{\beta} } \left| \mathcal{\overline{A}}(\tau) \right|,
	\end{align*}
	and this completes the proof.
\end{proof}

\section{Generalised Laplace transform and fractional differential equations} \label{chap:Lap}

Transform methods, including Laplace, Fourier, and other related transforms, are a tried and tested way of solving differential equations, both ordinary and partial, and also fractional ones. In the setting of fractional operators with respect to a function $\varphi$, we need a generalisation of the Laplace transform which is appropriate for handling those operators.

In this section, we first recall the definitions of generalised Laplace transform and $ \varphi $-exponential order which we shall refer to throughout the section. We find the generalised Laplace transforms of the generalised fractional integral and derivative of a function with respect to another function, and then make use of these results to solve some differintegral equations in the settings of the generalised operators.

\begin{definition}[\cite{jarad2020generalized,fahad2020laplace}] \label{psilaplacedef}
	Let $ u:[0,\infty) \to \mathbb{R} $ be a real-valued function and $ \varphi $ be a non-negative increasing $C^1$ function such that $\varphi(0)=0$. Then the Laplace transform of $u$ with respect to $\varphi$ is defined by
	\begin{equation} \label{def}
		\mathcal{L}_{\varphi} \left\{ u(x) \right\}=\hat{u}(s)=\int_{0}^{\infty} e^{-s \varphi(x)} \varphi^\prime (x) u(x) \,\mathrm{d}x,
	\end{equation}
for all $s\in\mathbb{C}$ such that this integral converges. Here $\mathcal{L}_{\varphi}$ denotes the Laplace transform with respect to $\varphi$, which we call a generalised Laplace transform.
\end{definition}

\begin{definition}[\cite{jarad2020generalized,fahad2020laplace}] \label{Def:psiexp}
	A function $u:[0,\infty)\to\mathbb{R}$ is said to be of $ \varphi $-exponential order $c>0$ if there exist positive constants $ M $ and $X$ such that for all $ x>X $,
	\[
	\left|u(x)\right| \leq M e^{c \varphi(x)},
	\]
	i.e., if
	\[
	u(x)= \mathcal{O}( e^{c \varphi(x)})  \text{ \ \ \ \ as \ \   }  x \to \infty.
	\]
	The significance of this definition is that, if $u$ is of $\varphi$-exponential order $c$, then its Laplace transform with respect to $\varphi$, the function $\mathcal{L}_{\varphi} \left\{ u(x) \right\}=\hat{u}(s)$, is well-defined for all $s\in\mathbb{C}$ with $\Real(s)>c$.	
\end{definition}

\begin{theorem}\label{GL:IO}
	With all notation as in Definition \ref{Def:GF}, and assuming that the function $ u $ is of $ \varphi $-exponential order, piecewise continuous over each finite interval $ [0,T] $, with generalised Laplace transform $ \hat{u} $, the function $	\prescript{\mathcal{A}}{0}I^{\alpha, \beta}_{\varphi(x)}u(x)$ has a generalised Laplace transform given by the following formula: 
	\begin{equation}
		\widehat{\prescript{\mathcal{A}}{0}I^{\alpha, \beta}_{\varphi(x)}u}(s)= s^{-\alpha} \mathcal{A}_{\Gamma}(s^{-\beta}) \hat{u}(s).
	\end{equation}
\end{theorem}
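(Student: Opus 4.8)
The plan is to combine the series formula of Theorem~\ref{lem:series} with a single base-level fact about how the generalised Laplace transform $\mathcal{L}_{\varphi}$ acts on the plain Riemann--Liouville integral with respect to $\varphi$, namely
\[
\mathcal{L}_{\varphi}\bigl\{\prescript{RL}{0}I^{\mu}_{\varphi(x)}u\bigr\}(s)=s^{-\mu}\,\hat u(s),\qquad \Real(\mu)>0.
\]
This is the natural $\varphi$-analogue of the classical identity $\mathcal{L}\{\prescript{RL}{0}I^{\mu}_{x}u\}(s)=s^{-\mu}\mathcal{L}\{u\}(s)$, and I would obtain it either by citing \cite{fahad2020laplace} or by deriving it directly: the substitution $y=\varphi(x)$ shows that $\mathcal{L}_{\varphi}\{w\}(s)=\mathcal{L}\{w\circ\varphi^{-1}\}(s)$ is an ordinary Laplace transform of the composed function, while the conjugation relation of Proposition~\ref{pro:x1} turns $\prescript{RL}{0}I^{\mu}_{\varphi(x)}$ into the ordinary operator $\prescript{RL}{0}I^{\mu}_{x}$ acting on $u\circ\varphi^{-1}$, after which the classical formula applies.

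With this base identity in hand, I would expand the operator using the locally uniformly convergent series of Theorem~\ref{lem:series},
\[
\prescript{\mathcal{A}}{0}I^{\alpha,\beta}_{\varphi(x)}u(x)=\sum_{n=0}^{\infty}a_n\,\Gamma(\beta n+\alpha)\,\prescript{RL}{0}I^{\beta n+\alpha}_{\varphi(x)}u(x),
\]
apply $\mathcal{L}_{\varphi}$ term by term, and use the base identity on each summand to get
\[
\widehat{\prescript{\mathcal{A}}{0}I^{\alpha,\beta}_{\varphi(x)}u}(s)=\sum_{n=0}^{\infty}a_n\,\Gamma(\beta n+\alpha)\,s^{-(\beta n+\alpha)}\,\hat u(s)=s^{-\alpha}\,\hat u(s)\sum_{n=0}^{\infty}a_n\,\Gamma(\beta n+\alpha)\,(s^{-\beta})^{n}.
\]
Recognising the final sum as $\mathcal{A}_{\Gamma}(s^{-\beta})$, with $\mathcal{A}_{\Gamma}$ as in Remark~\ref{def:2}, gives exactly the claimed formula $s^{-\alpha}\mathcal{A}_{\Gamma}(s^{-\beta})\hat u(s)$.

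An equivalent and perhaps more transparent route avoids the series. Writing $k(y)=y^{\alpha-1}\mathcal{A}(y^{\beta})$ and substituting $\sigma=\varphi(t)$ exhibits $\prescript{\mathcal{A}}{0}I^{\alpha,\beta}_{\varphi(x)}u(x)$ as $(k*v)(\varphi(x))$, where $v=u\circ\varphi^{-1}$ and $*$ denotes ordinary convolution, so that $\mathcal{L}_{\varphi}$ reduces to the classical convolution theorem $\mathcal{L}\{k*v\}=\mathcal{L}\{k\}\,\mathcal{L}\{v\}$ with $\mathcal{L}\{v\}(s)=\hat u(s)$. The transform of the kernel, $\mathcal{L}\{k\}(s)=\int_{0}^{\infty}e^{-sy}y^{\alpha-1}\mathcal{A}(y^{\beta})\,\mathrm{d}y$, then evaluates to $s^{-\alpha}\mathcal{A}_{\Gamma}(s^{-\beta})$ upon integrating the power series of $\mathcal{A}$ against $e^{-sy}$ and using $\int_{0}^{\infty}e^{-sy}y^{\beta n+\alpha-1}\,\mathrm{d}y=\Gamma(\beta n+\alpha)\,s^{-\beta n-\alpha}$.

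The hard part in either route will be justifying the interchange of the infinite summation with the Laplace integral over the whole half-line $[0,\infty)$, together with the attendant convergence of the series $\mathcal{A}_{\Gamma}(s^{-\beta})$. Since the coefficients $a_n\Gamma(\beta n+\alpha)$ of $\mathcal{A}_{\Gamma}$ inherit the factorial growth of the Gamma function, this series converges only for $\Real(s)$ large enough (equivalently $|s^{-\beta}|$ small enough), and the half-line setting moreover forces $R=\infty$, i.e.\ $\mathcal{A}$ entire, so that $k$ is defined on all of $[0,\infty)$. I would therefore restrict attention to such $s$ and control the tail by combining the decay of $|a_n|$ coming from analyticity of $\mathcal{A}$ with Stirling-type estimates on $\Gamma(\beta n+\alpha)$, and use the hypotheses that $u$ is of $\varphi$-exponential order and piecewise continuous on each $[0,T]$ — precisely the conditions guaranteeing that $\hat u(s)$ exists — to produce an integrable dominating function and invoke dominated convergence on a suitable right half-plane.
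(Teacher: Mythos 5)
Your primary route is exactly the paper's proof: expand $\prescript{\mathcal{A}}{0}I^{\alpha,\beta}_{\varphi(x)}u$ via the series formula of Theorem~\ref{lem:series}, apply $\mathcal{L}_{\varphi}$ term by term using the known identity $\mathcal{L}_{\varphi}\bigl\{\prescript{RL}{0}I^{\mu}_{\varphi(x)}u\bigr\}(s)=s^{-\mu}\hat u(s)$ from \cite{jarad2020generalized,fahad2020laplace}, and resum to $s^{-\alpha}\mathcal{A}_{\Gamma}(s^{-\beta})\hat u(s)$. The convergence caveats you raise (that the half-line setting effectively forces $\mathcal{A}$ to be entire, and that $\mathcal{A}_{\Gamma}(s^{-\beta})$ converges only for $\Real(s)$ sufficiently large, requiring a dominated-convergence justification of the termwise transform) are legitimate and go beyond the paper, whose proof performs the interchange without comment.
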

\begin{proof}
	Using the series formula \eqref{53} along with the generalised Laplace transform of the fractional integral of a function with respect to another function \cite{jarad2020generalized,fahad2020laplace}, we get
	\begin{align*}
		\widehat{\prescript{\mathcal{A}}{0}I^{\alpha, \beta}_{\varphi(x)}u}(s)&=\sum_{n=0}^{\infty} a_n \Gamma(\beta n+\alpha)\widehat{\prescript{RL}{0}I^{\beta n+ \alpha}_{\varphi(x)}} u(s)\\&=\sum_{n=0}^{\infty} a_n \Gamma(\beta n+\alpha) s^{-\beta n-\alpha} \hat{u}(s)\\&=s^{-\alpha} \hat{u}(s)\sum_{n=0}^{\infty} a_n \Gamma(\beta n+\alpha) s^{-\beta n},
	\end{align*}
	as required.
\end{proof}

\begin{theorem} \label{Thm:GLTderiv}
	With all notation as in Definition \ref{Def:GD}, and assuming that the function $ u $ is such that $ u(x) $, $ \prescript{RL}{0}I^{m - \alpha}_{\varphi(x)} u(x)$, $\prescript{RL}{0}D^{1}_{\varphi(x)} \prescript{RL}{0}I^{m - \alpha}_{\varphi(x)} u(x)$, $\dots$, $\prescript{RL}{0}D^{m}_{\varphi(x)} \prescript{RL}{0}I^{m - \alpha}_{\varphi(x)} u(x)$ are continuous on $ (0,\infty) $ and of $ \varphi $-exponential order, while $ \prescript{RL}{0}D^{\alpha}_{\varphi(x)}u(x) $ is piecewise continuous on $ [0,\infty) $, the function $\prescript{\mathcal{A}}{a}D^{\alpha,\beta}_{\varphi(x)}u(x)$ has a generalised Laplace transform given by the following formula: 
	\begin{align*}
		\widehat{\prescript{\mathcal{A}}{a}D^{\alpha,\beta}_{\varphi(x)}u}(s)= \sum_{n=0}^{\infty} \overline{a}_n &\Gamma(\beta n-\alpha+m) s^{\alpha-\beta n} \hat{u}(s) \\&-\sum_{n=0}^{\lfloor\Real\alpha/\Real\beta\rfloor}  \sum_{i=0}^{\lfloor N_n \rfloor} \overline{a}_n \Gamma(\beta n-\alpha+m)s^{N_n-i} \left( \prescript{RL}{0}I^{N_n-i + \beta n - \alpha +1 }_{\varphi(x) } u \right)(0),
	\end{align*}
	where $N_n\in\mathbb{Z}$ is defined as $N_n=\lfloor\Real(\alpha-\beta n)\rfloor$ for all $n$ such that this quantity is non-negative.
\end{theorem}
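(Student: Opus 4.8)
The plan is to reduce everything to the single-parameter series representation of the operator and then transform term by term, using the known generalised-Laplace behaviour of the Riemann--Liouville derivative with respect to $\varphi$. Since the generalised Laplace transform of Definition~\ref{psilaplacedef} requires $\varphi(0)=0$ and integrates from $0$, I would take $a=0$ throughout. The starting point is the series formula of Theorem~\ref{lem:Sf},
\begin{equation*}
\prescript{\mathcal{A}}{0}D^{\alpha,\beta}_{\varphi(x)}u(x)=\sum_{n=0}^{\infty}\overline{a}_n\,\Gamma(\beta n-\alpha+m)\,\prescript{RL}{0}D^{\alpha-\beta n}_{\varphi(x)}u(x),
\end{equation*}
which expresses the operator as a locally uniformly convergent series of Riemann--Liouville differintegrals with respect to $\varphi$. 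The single ingredient I need is the generalised Laplace transform of such a differintegral: for $\Real(\nu)>0$ with $k=\lfloor\Real(\nu)\rfloor+1$,
\begin{equation*}
\mathcal{L}_{\varphi}\left\{\prescript{RL}{0}D^{\nu}_{\varphi(x)}u\right\}(s)=s^{\nu}\hat{u}(s)-\sum_{j=0}^{k-1}s^{j}\left(\prescript{RL}{0}D^{\nu-j-1}_{\varphi(x)}u\right)(0),
\end{equation*}
while for $\Real(\nu)\leq0$ the differintegral is a fractional integral and the transform is simply $s^{\nu}\hat{u}(s)$ with no boundary terms; this is the $\varphi$-analogue of the classical rule and follows from \cite{jarad2020generalized,fahad2020laplace}. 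The continuity and $\varphi$-exponential-order hypotheses listed in the statement are exactly what is needed to apply this rule to each term and to secure the existence of $\hat{u}$.

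Next I would apply $\mathcal{L}_{\varphi}$ termwise. For each $n$ the principal contribution is $\overline{a}_n\Gamma(\beta n-\alpha+m)s^{\alpha-\beta n}\hat{u}(s)$, and summing these over all $n\geq0$ reproduces the first sum in the asserted formula. Boundary terms arise only for those $n$ with $\Real(\alpha-\beta n)>0$, \ie $n\leq\Real(\alpha)/\Real(\beta)$, which is why the second sum truncates at $n=\lfloor\Real\alpha/\Real\beta\rfloor$; for larger $n$ the term $\prescript{RL}{0}D^{\alpha-\beta n}_{\varphi(x)}$ is a pure fractional integral and contributes no initial data. For a surviving $n$, writing $N_n=\lfloor\Real(\alpha-\beta n)\rfloor$ the boundary sum runs over $j=0,\dots,N_n$ with terms $s^{j}(\prescript{RL}{0}D^{(\alpha-\beta n)-j-1}_{\varphi(x)}u)(0)$. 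Re-indexing by $i=N_n-j$ and using the convention $\prescript{RL}{0}D^{-\mu}=\prescript{RL}{0}I^{\mu}$ turns $\prescript{RL}{0}D^{(\alpha-\beta n)-j-1}_{\varphi(x)}$ into $\prescript{RL}{0}I^{N_n-i+\beta n-\alpha+1}_{\varphi(x)}$ and the prefactor $s^{j}$ into $s^{N_n-i}$, which is precisely the second sum in the statement.

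The step I expect to be the main obstacle is the rigorous justification of interchanging $\mathcal{L}_{\varphi}$ with the infinite sum, together with the convergence of the two resulting series. For the principal part this amounts to dominating the tail $\sum_n\overline{a}_n\Gamma(\beta n-\alpha+m)s^{\alpha-\beta n}\hat{u}(s)$, which for $\Real(\alpha-\beta n)\leq0$ consists of transforms of fractional integrals and can be controlled exactly as in the proof of Theorem~\ref{GL:IO} (using the local uniform convergence from Theorem~\ref{lem:Sf} and \cite[Theorem IV.1]{MillerRoss}); the finitely many derivative terms cause no difficulty. The boundary part is a finite double sum and so poses no convergence issue once the termwise transform is licensed. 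I would therefore split the series at an index $N$ with $\Real(\beta N-\alpha)>0$, handle the finite head directly and the tail by the integral-operator estimate, and then recombine to obtain the stated formula.
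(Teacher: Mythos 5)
Your proposal is correct and follows essentially the same route as the paper's proof: expand $\prescript{\mathcal{A}}{0}D^{\alpha,\beta}_{\varphi(x)}u$ via the series formula of Theorem~\ref{lem:Sf}, apply the generalised Laplace transform termwise using the known rule for $\prescript{RL}{0}D^{\gamma}_{\varphi(x)}$ (your derivative-form boundary terms re-index, via $\prescript{RL}{0}D^{-\mu}=\prescript{RL}{0}I^{\mu}$, into exactly the paper's integral-form terms), and observe that initial-value terms survive only for the finitely many $n$ with $\Real(\alpha-\beta n)\geq0$. If anything, your treatment is more careful than the paper's, which performs the termwise transform without addressing the interchange of $\mathcal{L}_{\varphi}$ with the infinite sum that you justify by splitting off the tail of fractional integrals.
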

\begin{proof}
Using the series formula \eqref{59} along with the generalised Laplace transform of the fractional derivative of a function with respect to another function, we obtain
	\[
	\widehat{\prescript{\mathcal{A}}{a}D^{\alpha,\beta}_{\varphi(x)}u}(s)=\sum_{n=0}^{\infty} \overline{a}_n \Gamma(\beta n-\alpha+m)\widehat{\prescript{RL}{a}D^{\alpha-\beta n}_{\varphi(x)}u}(s),
	\]
		where
	\[\widehat{\prescript{RL}{a}D^{\gamma}_{\varphi(x)}u}(s)=s^{\gamma} \hat{u}(s) - \sum_{i=0}^{\lfloor \Real\gamma \rfloor} s^{\lfloor\Real\gamma\rfloor-i} \left( \prescript{RL}{0}I^{\lfloor\Real\gamma\rfloor-i -\gamma +1 }_{\varphi(x) } u \right)(0),\qquad\Real(\gamma)>0.
	\]
This gives the required result, noting that there are only finitely many values of $n\geq0$ such that $\Real(\alpha-\beta n)\geq0$, namely all the values of $n\leq\frac{\Real(\alpha)}{\Real(\beta)}$, so the eventual resulting sum is finite.
\end{proof}

\begin{remark}
The assumptions on $u$ in the above Theorem are not all independent from each other. Indeed, if a function $u$ is $\varphi$-exponentially bounded, then so is its fractional integral to any positive (or positive-real-part) order. Therefore, in Theorem \ref{Thm:GLTderiv}, it is sufficient to assume that $u(x)$, and also $\prescript{RL}{0}D^{k}_{\varphi(x)} \prescript{RL}{0}I^{m - \alpha}_{\varphi(x)} u(x)$ for sufficiently large $k\leq m$ so that this is a fractional derivative rather than a fractional integral, are exponentially bounded.
\end{remark}

\begin{theorem}
	With all notation as in Definition \ref{Def:GF}, and assuming that the function $ v $ is piecewise continuous and is of $ \varphi $-exponential order, the following fractional integral has a unique solution $ u $.
	\begin{equation}\label{Solinteq}
		\prescript{\mathcal{A}}{0+}I^{\alpha, \beta}_{\varphi(x)}u(x) + c u(x)= v(x), \ \ \ u(0)=\frac{v(0)}{c}, \ \ c \in \R.
	\end{equation}
\end{theorem}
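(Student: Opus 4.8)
The plan is to solve \eqref{Solinteq} by means of the generalised Laplace transform $\mathcal{L}_{\varphi}$, producing an explicit solution as a Neumann-type series, and then to verify this series directly. First I would apply $\mathcal{L}_{\varphi}$ to both sides of \eqref{Solinteq}. By Theorem~\ref{GL:IO}, the transform of $\prescript{\mathcal{A}}{0}I^{\alpha,\beta}_{\varphi(x)}u$ is $s^{-\alpha}\mathcal{A}_{\Gamma}(s^{-\beta})\hat u(s)$, so the equation becomes the algebraic relation $\big(s^{-\alpha}\mathcal{A}_{\Gamma}(s^{-\beta})+c\big)\hat u(s)=\hat v(s)$, whence $\hat u(s)=\hat v(s)/\big(c+s^{-\alpha}\mathcal{A}_{\Gamma}(s^{-\beta})\big)$. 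Since $\Real(\alpha)>0$ and $\mathcal{A}_{\Gamma}$ is analytic with $\mathcal{A}_{\Gamma}(0)=a_0\Gamma(\alpha)$, the quantity $s^{-\alpha}\mathcal{A}_{\Gamma}(s^{-\beta})$ tends to $0$ as $\Real(s)\to\infty$, so on a right half-plane I may expand the reciprocal as a geometric series in $\tfrac1c\,s^{-\alpha}\mathcal{A}_{\Gamma}(s^{-\beta})$.

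This yields $\hat u(s)=\tfrac1c\sum_{k=0}^{\infty}\big(-\tfrac1c\big)^{k}\big(s^{-\alpha}\mathcal{A}_{\Gamma}(s^{-\beta})\big)^{k}\hat v(s)$. Recognising $s^{-\alpha}\mathcal{A}_{\Gamma}(s^{-\beta})$ as the Laplace symbol of $\prescript{\mathcal{A}}{0}I^{\alpha,\beta}_{\varphi(x)}$, its $k$-th power is the symbol of the $k$-fold composition $\big(\prescript{\mathcal{A}}{0}I^{\alpha,\beta}_{\varphi(x)}\big)^{k}$ (this requires, by induction, that each iterate preserves $\varphi$-exponential order, which holds by the remark following Theorem~\ref{Thm:GLTderiv}). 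Inverting term by term therefore suggests the candidate
\[
u(x)=\frac1c\sum_{k=0}^{\infty}\Big(-\frac1c\Big)^{k}\Big(\prescript{\mathcal{A}}{0}I^{\alpha,\beta}_{\varphi(x)}\Big)^{k}v(x),
\]
which is simply the resolvent of the (quasinilpotent) Volterra operator $\tfrac1c\prescript{\mathcal{A}}{0}I^{\alpha,\beta}_{\varphi(x)}$ applied to $\tfrac1c v$.

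The main obstacle is to show that this series converges (so that the formal term-by-term inversion is legitimate and the candidate is genuinely well-defined), and the key is a factorial decay estimate for the iterated operators. Writing the kernel of $\prescript{\mathcal{A}}{0}I^{\alpha,\beta}_{\varphi(x)}$ as $\varphi'(t)[\varphi(x)-\varphi(t)]^{\alpha-1}\mathcal{A}\big[(\varphi(x)-\varphi(t))^{\beta}\big]$ and bounding $|\mathcal{A}|$ by its supremum $\|\mathcal{A}\|_{\infty}$ over the relevant disc (finite, as in Theorem~\ref{thm4.1}), an induction using the substitution $\xi=\varphi(s)$ and the Beta-integral identity $\int_{t}^{x}[\varphi(x)-\varphi(s)]^{a-1}[\varphi(s)-\varphi(t)]^{b-1}\varphi'(s)\,\mathrm{d}s=\tfrac{\Gamma(a)\Gamma(b)}{\Gamma(a+b)}[\varphi(x)-\varphi(t)]^{a+b-1}$ (equivalently the semigroup property of Proposition~\ref{pro:a}) shows that the $k$-fold iterated kernel is dominated by $\|\mathcal{A}\|_{\infty}^{k}\Gamma(\Real\alpha)^{k}\Gamma(k\Real\alpha)^{-1}\varphi'(t)[\varphi(x)-\varphi(t)]^{k\Real\alpha-1}$. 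Consequently $\big\|\big(\prescript{\mathcal{A}}{0}I^{\alpha,\beta}_{\varphi(x)}\big)^{k}\big\|\le C^{k}/\Gamma(k\Real\alpha+1)$ for a constant $C$ depending only on $[a,b]$, $\varphi$, $\alpha$ and $\mathcal{A}$; since $1/\Gamma(k\Real\alpha+1)$ decays faster than any geometric sequence, the series converges absolutely and locally uniformly for every $c\neq0$, validating all the interchanges above.

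Finally I would verify the three remaining claims. Substituting the series into the left-hand side of \eqref{Solinteq} and using boundedness to move $\prescript{\mathcal{A}}{0}I^{\alpha,\beta}_{\varphi(x)}$ through the sum, the terms telescope: the contribution of $\prescript{\mathcal{A}}{0}I^{\alpha,\beta}_{\varphi(x)}u$ cancels all of $cu$ except the $k=0$ term $v(x)$, so the equation is satisfied. For the initial condition, every summand with $k\ge1$ is a fractional integral of positive-real-part order evaluated at the lower limit $x=0$ and hence vanishes there, leaving $u(0)=v(0)/c$. For uniqueness, if $w$ is the difference of two solutions then $w=-\tfrac1c\prescript{\mathcal{A}}{0}I^{\alpha,\beta}_{\varphi(x)}w$, and iterating gives $\|w\|\le |c|^{-k}\big\|\big(\prescript{\mathcal{A}}{0}I^{\alpha,\beta}_{\varphi(x)}\big)^{k}\big\|\,\|w\|\to0$ by the same factorial estimate, forcing $w\equiv0$.
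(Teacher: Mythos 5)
Your proposal is correct, and its opening coincides exactly with the paper's \emph{entire} proof: apply $\mathcal{L}_{\varphi}$, use Theorem~\ref{GL:IO} to obtain $\bigl(c+s^{-\alpha}\mathcal{A}_{\Gamma}(s^{-\beta})\bigr)\hat u(s)=\hat v(s)$, and solve for $\hat u$. The paper stops there, asserting that this expression ``has a unique inverse generalised Laplace transform''; it never shows that the resulting quotient actually \emph{is} the $\mathcal{L}_{\varphi}$-transform of some admissible function, so existence is left implicit and only uniqueness (via injectivity of $\mathcal{L}_{\varphi}$ on functions of $\varphi$-exponential order) is genuinely addressed. Everything after that point in your write-up is extra, and it is exactly what repairs this soft spot: you expand the resolvent as a Neumann series, identify $\bigl(s^{-\alpha}\mathcal{A}_{\Gamma}(s^{-\beta})\bigr)^{k}$ as the symbol of the $k$-fold iterate, prove convergence of $\tfrac1c\sum_{k}(-1/c)^{k}\bigl(\prescript{\mathcal{A}}{0}I^{\alpha,\beta}_{\varphi(x)}\bigr)^{k}v$ through the Beta-integral/iterated-kernel bound $C^{k}/\Gamma(k\Real\alpha+1)$, and then verify the equation, the initial condition, and uniqueness entirely in the time domain, so the Laplace computation becomes pure motivation rather than the logical backbone. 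This buys a constructive existence proof with an explicit solution formula and a uniqueness argument independent of Laplace inversion, at the cost of length. Two caveats, which you share with the paper rather than introduce: both arguments implicitly need $c\neq 0$ (already forced by the condition $u(0)=v(0)/c$), and the Laplace-domain manipulations require $\mathcal{A}_{\Gamma}$ to converge near $0$ so that $\mathcal{A}_{\Gamma}(s^{-\beta})$ is meaningful for large $\Real(s)$ --- not automatic from analyticity of $\mathcal{A}$, since the factors $\Gamma(\beta n+\alpha)$ can annihilate the radius of convergence; notably, your time-domain core (series, estimates, verification, uniqueness) is immune to this issue and stands on its own.
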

\begin{proof}
	Applying the generalised Laplace transform to the equation \eqref{Solinteq} and using the result of Theorem \ref{GL:IO}, we get
	\begin{align*}
		\hat{v}(s) &= \widehat{\prescript{\mathcal{A}}{0+}I^{\alpha, \beta}_{\varphi(x)}u}(s) + c \hat{u}(s)\\&= s^{-\alpha} \mathcal{A}_{\Gamma}(s^{-\beta}) \hat{u}(s) + c \hat{u}(s). 
	\end{align*}
	So we have an explicit expression for the generalised Laplace transform of $ u $, namely
	\begin{equation*}
		\hat{u}(s)= \frac{\hat{v}(s)}{s^{-\alpha} \mathcal{A}_{\Gamma}(s^{-\beta})  + c }.
	\end{equation*}
	This has a unique inverse generalised Laplace transform, giving a unique solution function $ u $.
\end{proof}

\section{Applications and conclusions} \label{chap:concl}

The main goal of this paper was to study the general class of fractional operators with analytic kernels with respect to functions, in particular using convergent infinite series and an operational calculus formulation. We achieved the main purpose, establishing various properties of these operators, such as establishing appropriate composition properties and function spaces, by considering each problem using the same methods as used in one of the smaller pre-existing classes of fractional operators. To achieve these goals, several concepts from fractional calculus and its generalisations needed to be pieced together. Key roles were played by the concept of series formulae, which comes from the theory of fractional calculus with analytic kernels, and by the concept of conjugation of operators, which comes from the theory of fractional calculus with respect to functions.

As we draw to the end of this work, it is time to consider not only the achievements made so far, but also finding open doors to several directions of further research that could be pursued in both the short term and long term. Mathematically, more work could be done on the function spaces appropriate for these operators, and on their relationships with other fractional operators. The current work contains some basic results on fractional differential equations using the new generalised operators, but these could be extended in the future to more advanced techniques for fractional differential equations which might find some value in applications.

As always, general mathematical formulations are valuable largely because they can be applied to a broader range of different real-world problems. It is known that several operators defined using convolutions with analytic kernels have discovered applications in effective modelling, such as the Atangana--Baleanu operators displaying Mittag-Leffler behaviour \cite{bas-acay,jajarmi}. Fractional derivatives with respect to functions have also discovered applications, such as the Hadamard operators displaying logarithmic behaviour which appear in probabilistic modelling of certain statistical distributions \cite{garra-orsingher-polito}, and fractional derivatives with respect to exponential functions used to effectively extend the Dodson diffusion equation \cite{garra-giusti-mainardi}. The general formalism introduced in this paper covers all such operators under a single umbrella, and also others such as Hadamard-type operators (see Example \ref{Ex:HT} above) which were not covered by either of the previous classes, but which nevertheless have applications e.g. in kinetics \cite{ma}. Therefore, any results proved in this general setting can immediately be applied to a huge range of problems, anything involving any operator with an analytic kernel or with respect to a monotonic function or any combination of the above.

The main point of this work is to demonstrate the ultimate generality to which such operators of fractional calculus can be taken: combining two already very general classes of operators to obtain a new, even more general, class. Even if this class in its full generality is not applicable to solve many real-world problems directly, its advantage lies in the fact that it is broad and general enough to cover a huge number of types of fractional calculus, from Prabhakar to Erdelyi to Hadamard-type, which all have different properties and behaviours, but which can all be covered by the general formulae introduced here.

\bibliography{OFDA}
\bibliographystyle{unsrt}
\end{document}